\def\@seccntformat#1{\csname the#1\endcsname.\hspace{2ex}}
 \renewcommand{\subsection}%
  {\@startsection{subsection}%
  {2}%
  {\z@}%
  {2ex}
  {0ex}
  {\reset@font\normalsize\bfseries}}%
 \newcommand{\nsection}{\@startsection{section}{1}{\z@}%
     {-5ex}
     {1ex}
     {\reset@font\center\large\sc}}
 \renewenvironment{thebibliography}[1]
 {\nsection*{\refname\@mkboth{\refname}{\refname}}%
   \list{\@biblabel{\@arabic\c@enumiv}}%
   {\settowidth
   \labelwidth{\@biblabel{#1}}%
   \leftmargin
	\labelwidth
        \advance
	 \leftmargin
	 \labelsep
         \@openbib@code
         \usecounter{enumiv}%
         \let\p@enumiv\@empty
	 \parskip=0pt
	 \itemsep=1pt
	 \parsep=1pt
	 \itemindent=\z@
         \renewcommand\theenumiv{\@arabic\c@enumiv}}%
   	 \sloppy
   	 \clubpenalty4000
   	 \@clubpenalty\clubpenalty
   	 \widowpenalty4000%
   	 \footnotesize
   	 \sfcode`\.\@m}
  	 {\def\@noitemerr
    	 {\@latex@warning{Empty `thebibliography' environment}}%
   	 \endlist}
\newtheoremstyle{thm}
 {1em}
 {3pt}
 {\itshape}
 {}
 {\bf}
 {. ---}
 {0.5em}
 {}
\newtheoremstyle{dfn}
 {1em}
 {3pt}
 {}
 {}
 {\bf}
 {. {---}}
 {0.5em}
 {}
\theoremstyle{thm}
\newtheorem{thm}[subsection]{Theorem}
\newtheorem{lem}[subsection]{Lemma}
\newtheorem*{lem*}{Lemma}
\newtheorem{cor}[subsection]{Corollary}
\newtheorem*{cor*}{Corollary}
\newtheorem{prop}[subsection]{Proposition}
\newtheorem*{prop*}{Proposition}
\newtheorem*{conj*}{Conjecture}
\newtheorem*{thm*}{Theorem}
\theoremstyle{dfn}
\newtheorem*{dfn*}{Definition}
\newtheorem*{ex*}{Example}
\newtheorem{rem}[subsection]{Remark}
\newtheorem*{rem*}{Remark}
\newcommand{\shom}{\mathop{\mc{H}om}\nolimits}
\newcommand{\send}{\mathop{\mc{E}nd}\nolimits}
\newenvironment{meta1}{
\noindent\color{red}
\sffamily[}{\upshape]}
\newenvironment{meta2}{
\noindent\color{magenta}
\sffamily[}{\upshape]}
\newsavebox{\circlebox}
\savebox{\circlebox}{\fontencoding{OMS}\selectfont\char13}
\newlength{\circleboxwdht}
\renewcommand{\H}{\ms{H}}
\begin{document}
\title{A Lefschetz theorem for overconvergent isocrystals with Frobenius
structure}

\author{
Tomoyuki Abe\footnote{
Kavli Institute for the Physics and Mathematics of the
Universe (WPI), The University of Tokyo, 5-1-5 Kashiwanoha, Kashiwa,
Chiba, 277-8583, Japan; supported by Grant-in-Aid for Young Scientists
(A) 16H05993.}
\,\,\, and \,\, 
H\'el\`ene Esnault\footnote{
Freie Universit\"at Berlin, Arnimallee 3, 14195, Berlin, Germany;
supported by the Einstein program.}
\setcounter{footnote}{-1}
\footnote{Mathematics Subject Classification (2010): 14F10, 14D20.}}
\date{\today}
\maketitle

\begin{abstract}
 We show a Lefschetz theorem for irreducible overconvergent
 $F$-isocrystals on smooth varieties defined over a finite field. We derive
 several consequences from it.
\end{abstract}

\section*{Introduction} \label{intro}

Let $X_0$ be a normal geometrically connected scheme of finite type
defined over a finite field $\mathbb{F}_q$, let $\mathcal{F}_0$ be an
irreducible lisse Weil $\overline{ \mathbb{Q}}_\ell$-sheaf with finite
determinant (thus in fact $\mathcal{F}_0$ is \'{e}tale sheaf as well),
where $\ell \neq p={\rm char}(\mathbb{F}_q)$. In Weil II
\cite[Conj.~1.2.10]{weil2}, Deligne conjectured the following.
\begin{enumerate}
 \item[(i)] The sheaf $\mathcal{F}_0$ is of weight $0$.
 \item[(ii)] There is a number field
	     $E\subset\overline{\mathbb{Q}}_\ell$
	     such that for any $n>0$ and $x\in X_0(\mb{F}_{q^n})$,
	     the characteristic polynomial
	     $f_{x}(\mathcal{F}_0,t):=
	     {\rm det}(1-tF_{x}\mid
	     \mathcal{F}_{0,\overline{x}})\in
	     E[t]$, where $F_{x}$ is the geometric
	     Frobenius of $x$.
 \item[(iii)] For any $\ell' \neq p$ and any embedding $\sigma\colon
	      E\hookrightarrow \overline{\mathbb{Q}}_{\ell'}$,
	      for any $n>0$ and $x\in X_0(\mb{F}_{q^n})$,
	      any root of $\sigma f_x(\mc{F}_0,t)=0$ is an $\ell'$-adic
	      unit.
 \item[(iv)] For any $\sigma$ as in (iii), there is an irreducible
	     $\overline{ \mathbb{Q}}_{\ell'}$-lisse  sheaf
	     $\mathcal{F}_{0,\sigma}$, called the $\sigma$-companion,
	     such that $\sigma
	     f_{x}(\mathcal{F}_0,t)=f_{x}(\mathcal{F}_{0,\sigma},t)$.
 \item[(v)] There is a crystalline version of (iv).
\end{enumerate}
\medskip

Deligne's conjectures (i)--(iv) have been proved by Lafforgue
\cite[Thm.~VII.6]{Laf} when $X_0$ is a smooth curve, as a corollary of
the Langlands correspondence, which is proven showing that automorphic
forms are in some sense motivic.
\medskip

When $X_0$ has dimension at least $2$, the automorphic side on which one
could rely to prove Deligne's conjectures is not available: there is no
theory of automorphic forms in higher dimension.
The problem then becomes how to reduce, by geometry, the statements to
dimension $1$. For (i) and (iii), one proves a Lefschetz theorem (see
\cite[Thm.~2.5]{Dr}, \cite[1.5--1.9]{De},  \cite[B1]{EK}):

\begin{thm} \label{thm:lef_l}
On $X_0$ smooth,  for any closed point $x_0$, there a smooth curve $C_0$ and a
 morphism $C_0\rightarrow X_0$ such that $x_0\to X_0$ lifts
 to $x_0\to C_0$, and such that the restriction of $\mathcal{F}_0$ to
 $C_0$ remains irreducible.
\end{thm}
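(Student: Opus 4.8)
The plan is to realize the curve $C_0$ as a well-chosen complete intersection of hyperplane sections passing through $x_0$, and to control the monodromy by a Bertini-type argument over the finite field. First I would reduce to the case where $X_0$ is affine: after shrinking, we may assume $X_0$ is a smooth affine variety over $\mathbb{F}_q$, embedded in some affine space $\mathbb{A}^N_{\mathbb{F}_q}$, with $\mathcal{F}_0$ a lisse sheaf on it. The monodromy group of $\mathcal{F}_0$ is the Zariski closure $G$ of the image of $\pi_1(X_0\otimes\overline{\mathbb{F}}_q)$ (or rather the Weil group) in $\mathrm{GL}(\mathcal{F}_{0,\bar x})$; irreducibility of $\mathcal{F}_0$ means this representation is irreducible. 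The goal is to find a curve $C_0\ni x_0$ so that the composite $\pi_1(C_0)\to\pi_1(X_0)\to\mathrm{GL}(\mathcal{F}_{0,\bar x})$ still has Zariski-dense (hence irreducible) image, or at least image with the same invariants/coinvariants being zero.

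Next I would invoke the key geometric input: an effective Lefschetz theorem for fundamental groups in the style of Drinfeld and of Esnault–Kerz. Specifically, for a general linear subspace $L\subset\mathbb{A}^N$ of appropriate dimension (codimension $\dim X_0 - 1$) defined over a sufficiently large finite extension $\mathbb{F}_{q^n}$ and passing through the point $x_0$, the intersection $C_0' := X_0\cap L$ is a smooth geometrically connected curve, and the map $\pi_1(C_0'\otimes\overline{\mathbb{F}}_q)\to\pi_1(X_0\otimes\overline{\mathbb{F}}_q)$ has image whose Zariski closure surjects onto $G$ — this is the Lefschetz theorem for the Zariski closure of monodromy, which holds because the kernel of surjectivity is controlled by vanishing cycles, and for a sufficiently generic pencil these do not interfere with the (semisimple) group $G$. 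One then descends the field of definition: since $x_0$ is a closed point, it is defined over some $\mathbb{F}_{q^d}$, and among the $\mathbb{F}_{q^n}$-points of the relevant parameter space (Grassmannian of linear subspaces through $x_0$) the locus of "bad" $L$ — those for which $C_0'$ is singular, reducible, or has degenerate monodromy — is a proper closed subset, so by Lang–Weil it misses an $\mathbb{F}_{q^n}$-point for $n$ large. The curve $C_0$ is then a smooth compactification-free affine curve; replacing it by its smooth locus and shrinking if necessary keeps $x_0$ on it.

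The main obstacle is the monodromy control, not the Bertini smoothness: one must ensure that restricting to the curve does not shrink the monodromy group from $G$ to a proper subgroup with fewer irreducible components acting, i.e. that $\mathcal{F}_0|_{C_0}$ stays irreducible and not merely that $C_0$ is smooth and connected. Over a field of characteristic $p$ the naive Lefschetz theorem for $\pi_1$ is not available in the strong form valid in characteristic $0$, so the argument must go through the Zariski closure of the image (which behaves well, by \cite{Dr}) rather than $\pi_1$ itself, and one must know that $G$ is the monodromy group of the restriction — this uses that a general hyperplane section pencil is "good" in the sense that its vanishing cycles generate a subgroup transverse to $G$, together with the fact that for an irreducible sheaf $G^\circ$ acts irreducibly up to the action of the component group, so surjectivity onto $G/G^\circ$ suffices to conclude. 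A secondary technical point is handling the passage from a possibly non-proper $X_0$: one first compactifies, applies the theorem where the complement behaves well, then restricts back; this is where the cited references \cite[Thm.~2.5]{Dr}, \cite[1.5--1.9]{De}, \cite[B1]{EK} do the bulk of the work, and I would quote them for the precise statement rather than reprove it.
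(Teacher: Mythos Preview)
The paper does not contain a proof of Theorem~\ref{thm:lef_l}: it is stated in the introduction as a known $\ell$-adic result, attributed to \cite[Thm.~2.5]{Dr}, \cite[1.5--1.9]{De}, and \cite[B1]{EK}, and serves only as background and as the model for the paper's main $p$-adic analogue, Theorem~\ref{thm:lefschetz}. There is thus no in-paper proof to compare your proposal against.

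As for your sketch versus the cited proofs: the overall shape---reduce to controlling the monodromy image, then find a curve through $x_0$ by a Bertini argument over $\mathbb{F}_q$---is right, but the mechanism you describe for the monodromy control is not the one used. The arguments in \cite{Dr} and \cite[B1]{EK} do not go through vanishing cycles or a characteristic-zero-style Lefschetz theorem for $\pi_1$. Instead, a group-theoretic lemma (of Frattini/Jordan type, using finiteness of $H^1_{\mathrm{\acute{e}t}}(-,\mathbb{Z}/\ell)$) produces a single finite \'etale Galois cover $X'\to X$ with the property that \emph{any} curve $C\to X$ for which $C\times_X X'$ is connected has $\pi_1(C)$ surjecting onto the full monodromy image (or at least onto a subgroup with the same Zariski closure, hence the same invariant subspaces). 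One then finds such a curve through $x_0$ by Hilbert irreducibility / Bertini over the finite field. Your remark that ``surjectivity onto $G/G^\circ$ suffices'' gestures in this direction, but the passage about vanishing cycles and good pencils is not part of the actual argument and would not go through as written in positive characteristic. Incidentally, exactly this Frattini-cover mechanism is what the present paper imports on the $p$-adic side: see Lemma~\ref{ladicex} and Lemma~\ref{lem:curve}.
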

Using Theorem~\ref{thm:lef_l}, Deligne proved (ii) (\cite[Thm.~3.1]{De}),
and Drinfeld, using (ii), proved (iv) in (\cite[Thm.~1.1]{Dr}), assuming
in addition $X_0$ to be smooth. In particular Drinfeld proved in
\cite[Thm.~2.5]{Dr} the following key theorem.

\begin{thm} \label{thm:drin_tame}
 If $X_0$ is smooth, given a number field $E \subset
 \overline{\mathbb{Q}}_\ell$, and a place $\lambda$ dividing $\ell$, a
 collection of polynomials $f_{x}(t)\in E[t]$ indexed by any $n>0$ and
 $x\in X_0(\mb{F}_{q^n})$, such that the following two conditions are
 satisfied:
 \begin{itemize}
  \item[(i)] for any smooth curve $C_0$ with a morphism $C_0\to X_0$ and
	     any $n>0$ and $x\in C_0(\mb{F}_{q^n})$,
	     there exists a lisse \'{e}tale $\overline{
	     \mathbb{Q}}_\ell$-sheaf $\mathcal{F}_0^{C_0}$ on $C_0$ with
	     monodromy in $\mr{GL}(r,E_\lambda)$ such that
	     $f_{x}(t)=f_{x}(\mathcal{F}_0^{C_0},t)$; 
  \item[(ii)] there exists a finite \'etale cover $X'_0\to X_0$ such
	      that $\mathcal{F}_0^{C_0}$ is tame on all $C_0$ factoring
	      through $X'_0\rightarrow X_0$.
 \end{itemize} 
 Then there exists a lisse $\overline{\mathbb{Q}}_\ell$-sheaf
 $\mathcal{F}_0$ on $X_0$ with monodromy in $\mr{GL}(r, E_\lambda)$,
 such that for any $n>0$ and $x\in X_0(\mb{F}_{q^n})$,
 $f_{x}(t)=f_{x}(\mathcal{F}_0,t)\in E[t]$.
\end{thm}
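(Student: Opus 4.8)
The plan is to realise $\mathcal{F}_0$ as a continuous representation $\rho\colon\pi_1(X_0)\to\mathrm{GL}(r,E_\lambda)$, built by gluing, over the category of smooth curves mapping to $X_0$, the representations $\rho_{C_0}$ of $\pi_1(C_0)$ attached to the sheaves $\mathcal{F}_0^{C_0}$ of hypothesis~(i). If $\dim X_0=1$ the statement is immediate from~(i) applied to $C_0=X_0$, so assume $\dim X_0\ge 2$. The first step is a reduction to the semisimple situation. Every closed point of $X_0$ lies on some smooth curve $C_0\to X_0$, and for such curves Chebotarev density together with Brauer--Nesbitt shows that the semisimplification $(\mathcal{F}_0^{C_0})^{\mathrm{ss}}$ is uniquely pinned down by the collection $\{f_x(t)\}$. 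Hence the $(\mathcal{F}_0^{C_0})^{\mathrm{ss}}$ are canonically defined and form a compatible system on the category of curves over $X_0$: for $C_0'\to C_0\to X_0$, the restriction of $(\mathcal{F}_0^{C_0})^{\mathrm{ss}}$ to $C_0'$ and $(\mathcal{F}_0^{C_0'})^{\mathrm{ss}}$ are semisimple with equal Frobenius characteristic polynomials, hence isomorphic. Since the conclusion only constrains characteristic polynomials, and since semisimplification preserves the property of having monodromy in $\mathrm{GL}(r,E_\lambda)$, it suffices to glue this semisimple compatible system, and I would replace each $\mathcal{F}_0^{C_0}$ by its semisimplification from now on.

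Second, I would use hypothesis~(ii) to bound ramification at infinity. Fix a smooth compactification $\overline{X}_0\supset X_0$ whose boundary is a normal crossings divisor. Because each $\mathcal{F}_0^{C_0}$ becomes tame after pullback along the fixed finite étale cover $X'_0\to X_0$, all of its wild ramification --- measured by Swan conductors along a compactification of $C_0$ --- is dominated by the fixed ramification of $X'_0/X_0$ together with the geometry of $C_0\hookrightarrow\overline{X}_0$. Two consequences matter. On one hand, a finiteness theorem of Deligne \cite{De} then bounds, on each fixed curve, the number of semisimple lisse $\overline{\mathbb{Q}}_\ell$-sheaves of rank $\le r$ with these numerical invariants, which is what makes the inductive build-up of $\rho$ over curves of growing complexity stabilise and converge. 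On the other hand --- and this is the role of tameness proper --- each finite quotient of each $\rho_{C_0}$ defines a \emph{tame} covering datum on curves over $X_0$, which is exactly what allows the relevant Lefschetz-type theorem to be invoked without recourse to resolution of singularities.

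Third, the finite-level gluing is Wiesend's ``Lefschetz theorem'' for fundamental groups, in the form developed and corrected by Wiesend, Kerz--Schmidt, Deligne and Esnault--Kindler \cite[B1]{EK}: a compatible system of finite étale covers of all smooth curves $C_0\to X_0$ that agrees on closed points and is tame along $\overline{X}_0\setminus X_0$ descends to a finite étale cover of $X_0$. Applying this at every finite level occurring in the images of the $\rho_{C_0}$, organising these into a tower (using as needed that the relevant tame fundamental groups are topologically finitely generated), and passing to the limit, one obtains $\rho\colon\pi_1(X_0)\to\mathrm{GL}(r,\overline{\mathbb{Q}}_\ell)$ whose restriction to each $C_0$ is $\rho_{C_0}$. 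Since $\pi_1(X_0)$ is topologically generated by the images of the various $\pi_1(C_0)$ --- the Lefschetz surjectivity statement for a generic curve section --- and each of those images lies in $\mathrm{GL}(r,E_\lambda)$, the closed subgroup they generate, namely the image of $\rho$, lies in $\mathrm{GL}(r,E_\lambda)$; and for any curve through a closed point $x$ one has $f_x(\mathcal{F}_0,t)=f_x(\mathcal{F}_0^{C_0},t)=f_x(t)\in E[t]$. This is the sheaf asked for.

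The step I expect to be the main obstacle is precisely the control of wild ramification at the boundary $\overline{X}_0\setminus X_0$. On any single curve a lisse sheaf on $X_0$ can look arbitrarily wildly ramified, and whether the finite-level covering data even assemble to an $\ell$-adic sheaf on $X_0$ --- rather than to some pro-finite, uncontrolled limit --- is not visible curve by curve; hypothesis~(ii) is the uniform wild-ramification bound that rules this out, and turning it into both the Deligne-finiteness input and the tameness needed to run the Wiesend reconstruction is the technical heart of the proof. A secondary delicate point is verifying that the compatible system is compatible in the strong sense the reconstruction theorem requires --- as sheaves on fibre products of curves over $X_0$, not merely at closed points --- which one again extracts from semisimplicity and Chebotarev.
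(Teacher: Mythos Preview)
The paper does not give its own proof of this statement: it is quoted in the Introduction as Drinfeld's theorem \cite[Thm.~2.5]{Dr} and used thereafter as a black box (in the proofs of Proposition~\ref{comptame} and Theorem~\ref{thm:comp}). So there is no proof in the paper to compare your attempt against.

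That said, your outline is in the right spirit---Drinfeld's argument does proceed by reconstructing a representation of $\pi_1(X_0)$ from the compatible family $\{\rho_{C_0}\}$ via a Wiesend-type theorem, and the reference \cite[B1]{EK} you cite is the relevant one---but a couple of steps in your sketch would not go through as written. First, you ``fix a smooth compactification $\overline{X}_0\supset X_0$ whose boundary is a normal crossings divisor''; in positive characteristic no such compactification is known to exist, and Drinfeld works instead with an arbitrary normal compactification together with the curve-wise notion of tameness (this is why hypothesis~(ii) is phrased in terms of curves). Second, your appeal to Deligne's finiteness theorem \cite{De} to make the tower ``stabilise and converge'' is misplaced: that theorem is not an input to Drinfeld's proof (the two papers are contemporaneous and logically independent here). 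The finiteness needed at each stage is the much softer fact that there are only finitely many tame covers of bounded degree, i.e.\ finite generation of tame quotients of $\pi_1$, and the genuine work lies in checking that the finite-level data are compatible in the strong sense required by the reconstruction theorem and then in lifting through the tower---which you correctly flag at the end as the delicate point.
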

Further,  to realize the assumptions of Theorem~\ref{thm:drin_tame} in
order to show the existence of  $\mathcal{F}_{0,\sigma}$, Drinfeld uses
Theorem~\ref{thm:lef_l}  in  \cite[4.1]{Dr}. He constructs step by step
the residual representations with monodromy in $\mr{GL}(r,
E_\lambda/\frak{m}^n)$ for $n$ growing, where $\frak{m}\subset
E_\lambda$ is the maximal ideal.

\medskip

The formulation of (v) has been made explicit by Crew
\cite[4.13]{Cre92}. The conjecture is that the crystalline category
analogous to the category of Weil  $\overline{ \mathbb{Q}}_\ell$-sheaves
is the category of overconvergent $F$-isocrystals (see
Section~\ref{recallisoc} for the definitions). 
In order to emphasize  the analogy between $\ell$ and $p$, one slightly
reformulates the definition of companions. One replaces $\sigma$ in
(iii) by an isomorphism $\sigma\colon \overline{ \mathbb{Q}}_\ell \to
\overline{\mathbb{Q}}_{\ell'}$ (see \cite[Thm.~4.4]{EK}), and keeps
(iv) as it is.  Here $\ell$, $\ell'$ are any two prime numbers.
For $\ell'=p$, $\ell\neq p$, and $\mc{F}$ an irreducible lisse
$\overline{\mathbb{Q}}_\ell$-sheaf, one requests the existence of an
overconvergent $F$-isocrystal $M_0$ on $X_0$ with eigenpolynomial
$f_{x}(M_0,t)$ such that  $f_{x}(M_0,t)=\sigma
f_{x}(\mathcal{F},t)\in \sigma(E)[t]$ for any $n>0$ and $x\in
X_0(\mb{F}_{q^n})$, where $f_{x}(\mathcal{F},t)$ is the characteristic
polynomial of the geometric Frobenius at $x$ on $\mc{F}$ (see
Section~\ref{eigenpol} for the definitions). The isocrystal $M_0$ is
called a {\em $\sigma$-companion to $\mathcal{F}$}.
Given an irreducible overconvergent $F$-isocrystal $M_0$
with finite determinant on $X_0$, and $\sigma$ as above, a lisse
$\ell$-adic Weil sheaf $\mathcal{F}$ on $X_0$ is a
$\sigma^{-1}$-companion if $\sigma^{-1} f_{x}(M_0,t)=
f_{x}(\mathcal{F},t)\in E[t]$ at $x\in X_0(\mb{F}_{q^n})$
(see Definition~\ref{eigenpol}).
Similarly we can assume $p=\ell=\ell'$. This way we can talk on
$\ell$-adic or $p$-adic companions of either an $M_0$ or an
$\mathcal{F}$.
The companion correspondence should
preserve the notions of
irreducibility, finiteness of the determinant,  the eigenpolynomials at
closed points of $X_0$, and the ramification.
\medskip

The conjecture in the strong form  has been proven by the
first author when $X_0$ is a smooth curve (\cite[Intro.\ Thm.]{A}).
The aim of this article is to prove the following analog of
Theorem~\ref{thm:lef_l} on $X$ smooth.

\begin{thm}[Theorem~\ref{thm:lefschetz}] \label{thm:lef_p}
 Let $X_0$ be a smooth geometrically connected scheme over $k$.
 Let $M_0$ be an irreducible overconvergent $F$-isocrystal with finite
 determinant. Then there exists a dense open subscheme
 $U_0\hookrightarrow X_0$, such that for every closed point $x_0\to
 U_0$, there exists a smooth irreducible curve $C_0$ defined over $k$,
 together with a morphism $C_0\to X_0$ and a factorization $x_0\to
 C_0\to X_0$,  such that
 the pull-back of $M_0$ to $C_0$ is irreducible.
\end{thm}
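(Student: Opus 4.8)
The plan is to recast irreducibility as the statement that an endomorphism algebra is a division algebra, and then to transport this from $X_0$ down to a curve through $x_0$ by an iterated weak‑Lefschetz theorem for overconvergent cohomology. For the reduction, set $\mathcal{E}=\mathcal{E}nd(M_0)=M_0^{\vee}\otimes M_0$, an overconvergent $F$‑isocrystal on $X_0$. Since $M_0$ is irreducible with finite determinant, the $p$‑adic Weil~II theorem (Kedlaya; Abe--Caro) shows $M_0$, hence $\mathcal{E}$, is $\iota$‑pure of weight $0$, so that $\mathrm{End}(M_0)=H^{0}_{\mathrm{rig}}(X_0,\mathcal{E})^{F=1}$ is a division algebra over the coefficient field. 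For any smooth geometrically connected curve $C_0$ over $k$ with a morphism $C_0\to X_0$, the pull‑back $M_0|_{C_0}$ is again $\iota$‑pure of weight $0$, hence semisimple; therefore $M_0|_{C_0}$ is irreducible as soon as $\mathrm{End}(M_0|_{C_0})=H^{0}_{\mathrm{rig}}(C_0,\mathcal{E}|_{C_0})^{F=1}$ is a division algebra, and for this it is enough that the restriction $H^{0}_{\mathrm{rig}}(X_0,\mathcal{E})\to H^{0}_{\mathrm{rig}}(C_0,\mathcal{E}|_{C_0})$ be an isomorphism. Thus the theorem reduces to producing $U_0$ and, for each closed $x_0\in U_0$, a curve $C_0\to X_0$ through $x_0$ for which this map is an isomorphism; I will in fact arrange it to be an isomorphism for every overconvergent isocrystal at once. (The case $\dim X_0=1$ is trivial with $C_0=X_0$, and replacing $X_0$ by a quasi‑projective dense open is harmless, restriction of our $\iota$‑pure irreducible $M_0$ to a dense open remaining irreducible.)

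The technical core is the following weak‑Lefschetz statement, to be proved by induction on the dimension. Fix a projective compactification of $X_0$ and a sufficiently ample embedding, so that Poonen's Bertini theorem supplies the relevant sections over $\mathbb{F}_q$ and not merely over $\overline{\mathbb{F}}_q$. Then there is a dense open $U_0\subseteq X_0$ such that every closed $x_0\in U_0$ lies in a flag $X_0=X_0^{(n)}\supset X_0^{(n-1)}\supset\cdots\supset X_0^{(1)}$, with $X_0^{(k-1)}$ a general hyperplane section of $X_0^{(k)}$ through $x_0$, all smooth and geometrically irreducible, and with $H^{0}_{\mathrm{rig}}(X_0^{(k)},\mathcal{G})\xrightarrow{\sim}H^{0}_{\mathrm{rig}}(X_0^{(k-1)},\mathcal{G}|_{X_0^{(k-1)}})$ for every overconvergent isocrystal $\mathcal{G}$ on $X_0^{(k)}$. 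Smoothness and geometric irreducibility of the sections are Bertini's theorems. For the cohomological isomorphism at a single step one invokes the six operations, Poincaré duality and the holonomic/perverse $t$‑structure for overconvergent cohomology, equivalently for arithmetic $\mathscr{D}$‑modules (Berthelot, Kedlaya, Caro, Abe--Caro): writing $j_{0,+}$ for the direct image to the compactification and $i$ for the inclusion of the hyperplane section, $j_{0,+}\mathcal{G}$ is holonomic and perverse up to shift, the functor $i^{*}[-1]$ is perverse $t$‑exact for a general hyperplane section, and $i^{*}$ commutes with $j_{0,+}$ when the section is transverse to the boundary; combining these with the localization triangle for the affine complement $X_0^{(k)}\setminus X_0^{(k-1)}$ (of dimension $\dim X_0^{(k)}\ge 2$) and the $p$‑adic Artin vanishing (compactly supported cohomology of a perverse object on a smooth affine vanishes in negative degrees) forces the restriction on $H^{0}_{\mathrm{rig}}$ to be an isomorphism. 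The conditions ``transverse, smooth, perverse $t$‑exact'' cut out an open dense subset of the space of hyperplanes, by constructibility of overconvergent cohomology in families; this subset meets the hyperplanes through $x_0$ unless $x_0$ lies in a certain proper closed subset, whose complement is the open $U_0$.

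Granting this, I would take $C_0:=X_0^{(1)}$, a smooth geometrically irreducible curve over $k$ equipped with the composite morphism $C_0\hookrightarrow\cdots\hookrightarrow X_0$ and containing $x_0$. Composing the isomorphisms of the preceding paragraph for $\mathcal{G}=\mathcal{E}nd(M_0)$ and its successive restrictions yields a Frobenius‑equivariant isomorphism $H^{0}_{\mathrm{rig}}(X_0,\mathcal{E}nd(M_0))\xrightarrow{\sim}H^{0}_{\mathrm{rig}}(C_0,\mathcal{E}nd(M_0)|_{C_0})$, hence $\mathrm{End}(M_0)\xrightarrow{\sim}\mathrm{End}(M_0|_{C_0})$ on $F$‑invariants, and then $M_0|_{C_0}$ is irreducible by the first paragraph.

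The hard part is the weak‑Lefschetz isomorphism of the second paragraph: establishing full faithfulness of restriction to a general hyperplane section, with nonconstant overconvergent coefficients and over a non‑proper base, requires the full six‑operations formalism, Poincaré duality, the holonomic/perverse $t$‑structure and Artin‑type vanishing for arithmetic $\mathscr{D}$‑modules, together with the Lefschetz $t$‑exactness of a general hyperplane pull‑back and its base‑change compatibility with the direct image to the compactification; and making everything work over the finite field rests on Poonen's Bertini theorem after a Veronese re‑embedding. The incompatibility of ``sufficiently general'' with ``passing through the prescribed point $x_0$'' for special $x_0$ is exactly what forces the passage to the dense open $U_0$. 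A second, more formal but indispensable, ingredient is $p$‑adic Weil~II, used in the reduction both to know that $\mathcal{E}nd(M_0)$ is pure and that the pull‑back to the curve is semisimple, so that the cohomological isomorphism genuinely encodes preservation of irreducibility.
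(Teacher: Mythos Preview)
Your reduction in the first paragraph is sound and in fact slightly cleaner than the paper's Tannakian route: since the coefficient field is $\overline{\mathbb{Q}}_p$, irreducibility of the semisimple object $M_0|_{C_0}$ is equivalent to $\dim\mathrm{End}(M_0|_{C_0})=1$, so a Frobenius-equivariant isomorphism on $H^0$ of $\mathcal{E}nd(M_0)$ suffices. The paper instead passes through the whole Tannakian subcategory $\langle M\rangle$ and a Chevalley-type lemma, but that is not where the difficulty lies.

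The genuine gap is in your second paragraph. The step ``$i^{*}$ commutes with $j_{0,+}$ when the section is transverse to the boundary'' is precisely the commutation $(!C)(+Z)\cong(+Z)(!C)$ that the paper isolates as Proposition~\ref{commutation}, and that proposition requires the isocrystal to be \emph{tame with nilpotent residues} along $Z$; it is false for wildly ramified $M$. Intuitively, for an irregular object the cohomology supported on the boundary is not controlled by transversality alone, so the localization/Artin-vanishing argument you sketch cannot be run. Consequently your weak-Lefschetz isomorphism ``for every overconvergent isocrystal at once'' does not hold, and the claimed direct hyperplane-section induction breaks down exactly in the wild case. The paper's architecture is built around this obstacle: Section~\ref{sec:coh} carries out essentially your argument but only under the tameness hypothesis (Corollary~\ref{finalcor}, Theorem~\ref{lefsthmtame}), and then Section~3 treats the wild case by an indirect route---Kedlaya's semistable reduction to make $h^+M_0$ tame, construction of $\ell$-adic companions in the tame case (Proposition~\ref{comptame}), the $\ell$-adic Lefschetz theorem to produce a controlling \'etale cover (Lemma~\ref{ladicex}), and a comparison of $H^0$ via $L$-functions (Lemma~\ref{coinH0}) to transfer the $H^0$-isomorphism back to the crystalline side. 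Your proposal is missing this entire detour; without it, the argument proves the theorem only for isocrystals that are already tame along a good compactification.
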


Theorem~\ref{thm:lef_p}, together with \cite[Rmk.~3.10]{De}, footnote 2,
and \cite[Thm.~4.2.2]{A}  enable one to conclude that there is a number
field $E\subset \overline{\mathbb{Q}}_p$ such that for any $n>0$ and
 $x\in X_0(\mb{F}_{q^n})$, $f_{x}(M_0,t)\in E[t]$ (see
 Lemma~\ref{lem:E}). This yields the $p$-adic analog of (i).
Then  Theorem~\ref{thm:drin_tame} implies  the existence of
$\ell$-adic companions to a given irreducible overconvergent
$F$-isocrystal $M_0$ with finite determinant (see
Theorem~\ref{thm:comp}). We point out that the existence of $\ell$-adic
companions has already been proven by Kedlaya in \cite[Thm.~5.3]{K2} in
a different way, using weights (see \cite[\S 4, Intro.]{K2}), however
not their irreducibility.
The Lefschetz theorem~\ref{thm:lefschetz} implies that the companion
correspondence preserves irreducibility.
\medskip

Theorem~\ref{thm:lef_p} has other consequences (see
Section~\ref{sec:appl}), aside of the existence already mentioned of $\ell$-adic companions. 
Deligne's finiteness theorem \cite[Thm.~1.1]{EK} transposes to
the crystalline side (see Remark~\ref{cor:finiteness}): on $X_0$ smooth,
there are finitely many isomorphism classes of irreducible
overconvergent $F$-isocrystals in bounded rank and bounded ramification,
up to twist by a character of the finite field. One can also kill the
ramification of an $F$-overconvergent isocrystal by a finite \'etale
cover in Kedlaya's semistability reduction theorem
(Remark~\ref{rmk:ss}).
\medskip

We now explain the method of proof of Theorem~\ref{thm:lef_p}. We
replace $M_0$ by the full Tannakian subcategory $\langle M\rangle $
of the category of overconvergent $F$-isocrystals spanned by $M$ over the algebraic closure
$\overline{ \mathbb{F}}_q$ (we drop the lower indices $_0$ to indicate
this, see Section~\ref{recallisoc}  for the definitions).  We slightly improve 
the theorem (\cite[Prop.~2.21 (a), Rmk.~2.29]{DM})
describing the surjectivity of an homomorphism of Tannaka groups in
categorical terms in Lemma~\ref{keytannak}:
the restriction functor $\langle
M\rangle \to \langle M|_C\rangle$ to a curve $C\to X$ is an isomorphism
as it is fully faithful and any  $F$-overconvergent isocrystal of rank
$1$ on $C$ is torsion.
Class field theory for $F$-overconvergent isocrystals
(\cite[Lem.~6.1]{AbeLL}) implies the torsion property.  As for full
faithfulness, the problem is of cohomological nature, one has
to compute that the restriction homomorphism
$H^0(X, N)\to H^0(C, N|_C)$ is an isomorphism for all objects in
$\langle M\rangle$. In the tame case, this is performed in
Section~\ref{sec:coh}  using the techniques developed in \cite{AC}. As a
corollary, $\ell$-adic companions exist in the tame case (see
Proposition~\ref{comptame}). In the wild case, Kedlaya's semistability
reduction theorem asserts the existence of a good alteration
$h\colon X''_0\to X_0$ such that  $h^+M_0$ becomes tame.
One considers the $\ell$-adic companion $\mathcal{F}_0$ of $h^+M_0$ and
a finite \'etale cover $g\colon X'_0\to X''_0$  which is such that
curves $C_0\to X_0$ with non-disconnected pull-back  $C_0\times_{X_0}
X'_0$  have the property that $C_0\times_{X_0} X''_0$   preserves the
irreducible constituents of $\mathcal{F}_0$ (see
Lemma~\ref{lem:curve}). It remains then to show that the  dimensions of
$H^0$ of $M$ and of $\mathcal{F}$,  pulled back  $X'_0$, $\mathcal{F}_0$, are
the same (see Lemma~\ref{coinH0}).
\medskip

{\it Acknowledgments:}
The first author thanks Kiran S. Kedlaya for discussions before this
work took shape. He is also grateful to Valentina Di Proietto for
discussions, encouragements, and support while he was visiting
Freie University.
The second author thanks Moritz Kerz for the discussions while writing
\cite{EK}, which enabled her to better understand Deligne's ideas,
Atsushi Shiho with whom she discussed various versions of Lefschetz
theorems for unit-root isocrystals, and Pierre Deligne who promptly
answered a question concerning \cite[Rmk.~3.10]{De} (see footnote 2).

\section*{Notations and conventions}
Let $q=p^s$, and let $k$ be a field of $q$ elements $k$. We fix once for
all an algebraic closure $\overline{k}$ of $k$.
For an integer $n>0$, let $k_n$ be the finite extension of in $\overline{k}$
of degree $n+1$.
For every prime number $\ell$, including $p$, we fix an isomorphism
$\iota\colon\overline{\mb{Q}}_p\cong\mb{C}$. 
By a curve we mean  an irreducible
scheme of finite type over $k$ which is of dimension $1$.

\section{Generalities}
\subsection{}
\label{recallisoc}
Let us start with recalling basic concepts of $p$-adic coefficients used
in \cite{A}. We fix a base tuple (cf.\ \cite[4.2.1]{A})
$\mf{T}_k:=(k,R=W(k),K,s,\mr{id})$, and take
$\sigma_{\overline{\mb{Q}}_p}=\mr{id}$.
Let $X_0$ be a smooth scheme over $k$.
We put $X_n:=X_0\otimes_kk_n$, and $X:=X_0\otimes_k\overline{k}$.
We denote the categories
$\mr{Isoc}^\dag_{\mf{T}_k}(X_0/\overline{\mb{Q}}_p)$ and 
$\mr{Isoc}^\dag_{\mf{T}_k}((X_0)_0/\overline{\mb{Q}}_p)$,
which are defined in \cite[4.2.1 {\it etc.}]{A},
by $\mr{Isoc}^\dag(X)$ and $\mr{Isoc}^\dag(X_0)$
respectively. We recall now the definitions.
\medskip

In \cite{Be}, Berthelot defined the category of overconvergent
isocrystals,
which we denote by $\mr{Isoc}^\dag_{\mr{Ber}}(X_0/K)$.
We fix an algebraic closure $\overline{\mb{Q}}_p$ of $K$ and  extend the
scalars from $K$ to $\overline{\mb{Q}}_p$  in the following way.
Let $L$ be a finite field extension of $K$ in
$\overline{\mb{Q}}_p$. Then an $L$-isocrystal is a pair $(M,\lambda)$
where $M\in\mr{Isoc}^\dag_{\mr{Ber}}(X_0/K)$, and
$\lambda\colon L\rightarrow\mr{End}_{\mr{Isoc}^\dag
_{\mr{Ber}}(X_0/K)}(M)$ is a ring homomorphism which is called the {\em
$L$-structure}.
Homomorphisms between $L$-isocrystals are homomorphisms of isocrystals
which are compatible with the $L$-structure.
The category of $L$-isocrystals is denoted by
$\mr{Isoc}^\dag_{\mr{Ber}}(X_0/L/K)$. Finally, taking the
2-inductive limit over all such $L$, we obtain
$\mr{Isoc}^\dag_{\mr{Ber}}(X_0/\overline{\mb{Q}}_p/K)$. This category
does not have suitable finiteness properties. To be able to acquire
these, we need, in addition, to define a ``Frobenius structure''.
\medskip

Let $F\colon X_0\rightarrow X_0$ be the $s$-th Frobenius endomorphism of
$X_0$, which is an endomorphism over $k$.
For an integer $n>0$
and $M\in\mr{Isoc}^\dag_{\mr{Ber}}(X_0/\overline{\mb{Q}}_p/K)$, an
{\em $n$-th Frobenius structure}\footnote{
In the definition, the functor $F^+$ is used. This functor is the same
as the more familiar notation $F^*$ in \cite{Be} (cf.\
\cite[Rem~1.1.3]{A}).
Since our treatment of isocrystals is from the viewpoint of
$\ms{D}$-modules, we borrow the notations from this theory.}
is an isomorphism
$\Phi\colon F^{n+}M\xrightarrow{\sim}M$.
The category $\mr{Isoc}^\dag_{\mf{T}_k}(X_0)$ is the category of
pairs $(M,\Phi)$ where
$M\in\mr{Isoc}^\dag_{\mr{Ber}}(X/\overline{\mb{Q}}_p/K)$, $\Phi$ is
the $1$-st Frobenius structure,
and the homomorphisms are the ones compatible with $\Phi$ in an obvious
manner.
We recall the following result.

\begin{lem*}[{\cite[Cor.~1.4.12]{A}}]
 \label{lem:k}
 Let $k'$ be a finite extension of $k$, and $X_0$ be a smooth scheme
 over $k'$, then we have a canonical equivalence
 \begin{equation*}
  \mr{Isoc}^\dag_{\mf{T}_k}(X_0)\cong
   \mr{Isoc}^\dag_{\mf{T}_{k'}}(X_0).
 \end{equation*}
\end{lem*}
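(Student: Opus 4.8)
Write $d=[k':k]$. Since $k'/k$ is an extension of finite fields it is cyclic; fix a generator $\phi$ of $\mr{Gal}(k'/k)\cong\mb{Z}/d\mb{Z}$. As $|k'|=q^{d}$, the base tuple $\mf{T}_{k'}$ has $s'=sd$, so its Frobenius endomorphism is $F'=F^{d}$, the $d$-fold iterate of the $s$-th Frobenius $F$ of $X_0$, and $K'/K$ is unramified of degree $d$. The plan is to pass to the geometric situation, where $X_0$ acquires $d$ conjugate components over $\overline{k}$, and then to perform a purely categorical descent along the resulting cyclic action; the substance lies in two standard facts about overconvergent isocrystals recalled below.

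Because $X_0$ is a $k'$-scheme, $X:=X_0\otimes_k\overline{k}=X_0\otimes_{k'}(k'\otimes_k\overline{k})$, and $k'\otimes_k\overline{k}\cong\prod_{\tau}\overline{k}$, the product over $\tau\in\mr{Hom}_k(k',\overline{k})$, is an étale $\overline{k}$-algebra; hence $X\cong\coprod_{\tau}Y_\tau$ with $Y_\tau:=X_0\otimes_{k',\tau}\overline{k}$. The set $\mr{Hom}_k(k',\overline{k})$ is a torsor under $\mr{Gal}(k'/k)$, so precomposition with $\phi$ makes it a single $d$-cycle, and under the decomposition the base change of $F$ to $X$ permutes the components $\{Y_\tau\}$ along this cycle. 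Moreover $F$ is a power of the absolute Frobenius, hence a universal homeomorphism, hence so is its base change to $X$ and so is its restriction $Y_\tau\to Y_{\phi\tau}$ to a component; and $F^{d}$, being the $q^{d}$-power Frobenius, fixes $k'=\mb{F}_{q^{d}}$ pointwise, hence is $k'$-linear, hence preserves each $Y_\tau$ and restricts on the distinguished component $Y:=Y_{\tau_0}$ — attached to the fixed embedding $\tau_0\colon k'\hookrightarrow\overline{k}$, this being the geometric scheme $X_0\otimes_{k'}\overline{k}$ that enters the definition of $\mr{Isoc}^\dag_{\mf{T}_{k'}}$ — to the $q^{d}$-power Frobenius of $X_0/k'$, i.e.\ to the Frobenius endomorphism attached to $\mf{T}_{k'}$.

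The two facts I use are: (a) the formation of $\mr{Isoc}^\dag_{\mr{Ber}}(\,\cdot\,/\overline{\mb{Q}}_p/K)$ is compatible with enlarging the coefficient field along the unramified inclusion $K\subseteq K'$ and carries finite disjoint unions to products; and (b) overconvergent isocrystals are invariant under universal homeomorphisms, so that pullback along a universal homeomorphism is an equivalence of categories. By (a), the decomposition $X\cong\coprod_{\tau}Y_\tau$ yields $\mr{Isoc}^\dag_{\mr{Ber}}(X/\overline{\mb{Q}}_p/K)\cong\prod_{\tau}\mr{Isoc}^\dag_{\mr{Ber}}(Y_\tau/\overline{\mb{Q}}_p/K')$, under which the Frobenius pullback $F^{+}$ becomes the cyclic shift assembled from the functors $(F|_{Y_\tau})^{+}\colon\mr{Isoc}^\dag_{\mr{Ber}}(Y_{\phi\tau}/\overline{\mb{Q}}_p/K')\to\mr{Isoc}^\dag_{\mr{Ber}}(Y_\tau/\overline{\mb{Q}}_p/K')$; by (b) and the previous paragraph each $(F|_{Y_\tau})^{+}$ is an equivalence, and the $d$-fold composite beginning at $\tau_0$ is $(F^{d}|_{Y})^{+}=(F')^{+}$. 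One is thus reduced to the elementary assertion that, given a category $\mc{C}=\prod_{i\in\mb{Z}/d\mb{Z}}\mc{C}_i$ and an autoequivalence $G$ of $\mc{C}$ permuting the factors cyclically through equivalences $\mc{C}_{i+1}\xrightarrow{\sim}\mc{C}_i$, with $G^{d}$ restricting to an autoequivalence $G_0$ of $\mc{C}_0$, the category of pairs $(M,\Phi\colon GM\xrightarrow{\sim}M)$ is equivalent, via $(M,\Phi)\mapsto(M|_{\mc{C}_0},\ \text{composite of }\Phi\text{ around the cycle})$, to the category of pairs $(N,\Psi\colon G_0N\xrightarrow{\sim}N)$ — the quasi-inverse reconstructs the missing components by applying $G$ and takes all but one of the structure maps to be the identity. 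Applied with $G=F^{+}$, $G_0=(F')^{+}$, and $\mc{C}_i=\mr{Isoc}^\dag_{\mr{Ber}}(Y_{\tau_0\circ\phi^{i}}/\overline{\mb{Q}}_p/K')$, this gives the desired equivalence $\mr{Isoc}^\dag_{\mf{T}_k}(X_0)\cong\mr{Isoc}^\dag_{\mf{T}_{k'}}(X_0)$; it is canonical because every choice (the component $Y$, the cyclic structure on the components, the coefficient field $K'$) is forced by the fixed $\overline{k}$ and the given $k'$-structure.

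Everything after the geometric decomposition is formal, so the real content is facts (a) and (b): the topological invariance of overconvergent isocrystals — equivalently, that Frobenius pullback is an equivalence on $\mr{Isoc}^\dag$ — and the compatibility of Berthelot's construction (via frames, tubes and overconvergent connections) with clopen decompositions and with enlarging $K$ to the unramified $K'$. These belong to the foundations of the theory; the care required is in checking that they apply verbatim here, in particular that the $d$ pullbacks $(F|_{Y_\tau})^{+}$ compose to $(F')^{+}$ compatibly with all the identifications, after which the cyclic descent and the coefficient bookkeeping are routine.
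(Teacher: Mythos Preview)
The paper does not supply its own proof of this lemma; it is quoted from \cite[Cor.~1.4.12]{A}, so there is nothing in the present text to compare your argument against directly.

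That said, your outline is the standard mechanism for such a statement and is essentially correct: decompose along the $d$ embeddings $k'\hookrightarrow\overline{k}$, observe that the $q$-Frobenius permutes the resulting $d$ factors cyclically while its $d$-th iterate restricts to the $q^{d}$-Frobenius on the distinguished one, and then perform the elementary categorical descent identifying pairs $(M,\Phi\colon GM\xrightarrow{\sim}M)$ for a cyclic autoequivalence $G$ with pairs $(N,\Psi\colon G^{d}N\xrightarrow{\sim}N)$ on a single factor. The two foundational inputs you isolate --- topological invariance of overconvergent isocrystals under Frobenius pullback, and compatibility of Berthelot's category with disjoint unions and unramified coefficient extension --- are exactly what is needed. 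One small remark: depending on how the categories in \cite{A} are actually set up, the product decomposition may arise on the coefficient side from $K'\otimes_K\overline{\mb{Q}}_p\cong\prod_{\tau}\overline{\mb{Q}}_p$ rather than from a clopen decomposition of $X_0\otimes_k\overline{k}$; but these are the same index set with the same cyclic Frobenius action, so your argument goes through verbatim once rephrased.
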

This lemma shows that the category does not depend on the choice of the
base field, and justifies the notation $\mr{Isoc}^\dag(X_0)$ by
suppressing the subscript $\mf{T}_k$.
\medskip

Finally, $\mr{Isoc}^\dag(X)$ is the full subcategory of
$\mr{Isoc}^\dag_{\mr{Ber}}(X_0/\overline{\mb{Q}}_p/K)$
consisting of objects $M$ such that for any constituent $N$ of $M$,
there exists $i>0$ such that $N$ can be endowed with $i$-th Frobenius
structure.
This category {\em does} depend on the choice of the base.
\medskip

As a convention, we put subscripts $\cdot_n$ for isocrystals on
$X_n$. Let $M_n\in\mr{Isoc}(X_n)$. Then the pull-back of $M_n$ to
$X_{n'}$ for $n'\geq n$ is denoted by $M_{n'}$, and pull-back to
$\mr{Isoc}^\dag(X)$ is denoted by $M$.

\begin{rem*}
 The category $\mr{Isoc}^\dag(X_n)$ has another description by
 \cite[Lem.~1.4.12 (ii)]{A}: it is equivalent to the category of
 isocrystals with $n$-th Frobenius structure 
  in $\mr{Isoc}^\dag_{\mr{Ber}}(X_0/\overline{\mb{Q}}_p/K)$.
\end{rem*}

\subsection{}
\label{dfntame}
For later use, we fix the terminology of tameness.

\begin{dfn*}
 Let $X_0$ be a smooth curve, and  $\overline{X}_0$ be the smooth
 compactification of $X_0$. An isocrystal $M\in\mr{Isoc}^\dag(X)$ is
 said to be {\em tame} if it is log-extendable along the boundary
 $\overline{X}_0\setminus X_0$ \cite[\S1]{S0}.
 For a general scheme $X_0$ of finite type
 over $k$, $M\in\mr{Isoc}^\dag(X)$ is said to be {\em tame} if for any
 smooth curve $C_0$ and any morphism $\varphi\colon C_0\rightarrow X_0$,
 the pull-back $\varphi^+(M)$ is tame. We say $M\in\mr{Isoc}^\dag(X_0)$
 is {\em tame} if $M$ is tame.
\end{dfn*}

\begin{rem*}
 Let $X_0$ be a smooth scheme which admits a smooth compactification
 whose divisor at infinity has strict normal crossings. Then
 $M\in\mr{Isoc}^\dag(X)$ being tame is equivalent to saying $M$ is
 log-extendable along the divisor at infinity. The ``if'' part is easy
 to check, and the ``only if'' part is a consequence of \cite[Thm.~0.1]{S}.
 We need to be careful as Shiho is assuming
 the base field to be uncountable. However, in our situation, because of
 the presence of Frobenius structure, this assumption is not needed as
 explained in \cite[footnote (4) of 2.4.13]{A}. 
 \end{rem*}

\subsection{}
\label{arithdmodrev}
In these notes, we freely use the formalism of arithmetic
$\ms{D}$-modules developed in \cite{A}.
For a separated scheme of finite type $X_0$ over $k$, in {\it ibid.}, 
the triangulated
category $D^{\mr{b}}_{\mr{hol},\mf{T}_k}(X)$ (resp.\
$D^{\mr{b}}_{\mr{hol},\mf{T}_k}(X_0)$) with t-structure is defined. Its
heart is denoted by $\mr{Hol}_{\mf{T}_k}(X)$ (resp.\
$\mr{Hol}_{\mf{T}_k}(X_0)$), see \cite[1.1]{A}.
The cohomology functor for this t-structure is denoted by $\H^*$.
We often drop the subscript $\mf{T}_k$.
When $X$ is smooth, $\mr{Isoc}^\dag(X)$ (resp.\ $\mr{Isoc}^\dag(X_0)$)
is fully faithfully embedded
into $\mr{Hol}(X)[-d]$ (resp.\ $\mr{Hol}(X_0)[-d]$) where $d=\dim(X_0)$,
and we identify $\mr{Isoc}^\dag(X)$ (resp.\ $\mr{Isoc}^\dag(X_0)$) with
its essential image in $\mr{Hol}(X)[-d]$ (resp.\ $\mr{Hol}(X_0)[-d]$).
Let $\epsilon \colon X_0\rightarrow\mr{Spec}(k)$ be the structural
morphism. For $M\in\mr{Isoc}^\dag(X)$, we set
\begin{equation*}
 H^i(X/k,M):=H^i(\epsilon_+M)
\end{equation*}
which is a $\overline{\mb{Q}}_p$-vector space.
If no confusion can arise, we often drop $/k$, even if  this cohomology
{\em does} depend on $k$, as the following lemma shows.

\begin{lem}
 \label{compdiffbase}
 Let $X_0$ be a variety over a finite field extension $k'$ of $k$.
 Let $M\in D^{\mr{b}}_{\mr{hol}}(X)$. Then one has
 \begin{equation*}
  H^*(X/k,M)\cong\bigoplus_{\sigma\in\mr{Gal}(k'/k)}
   H^*(X/k',M).
 \end{equation*}
\end{lem}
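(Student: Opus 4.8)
The plan is to reduce the comparison of cohomologies over $k$ and over $k'$ to a statement about the pushforward along the finite morphism $\mathrm{Spec}(k')\to\mathrm{Spec}(k)$, and then to make that pushforward fully explicit. Write $\epsilon'\colon X_0\to\mathrm{Spec}(k')$ for the structural morphism over $k'$, write $\pi\colon\mathrm{Spec}(k')\to\mathrm{Spec}(k)$ for the natural map, and let $\epsilon\colon X_0\to\mathrm{Spec}(k)$ be the composite, so that $\epsilon_+ \cong \pi_+\circ\epsilon'_+$ by the composition property of pushforward in the arithmetic $\ms{D}$-module formalism of \cite{A}. Hence it suffices to compute $\pi_+$ on the complex $\epsilon'_+M\in D^{\mr{b}}_{\mr{hol}}(\mathrm{Spec}(k'))$, i.e.\ on a complex of finite-dimensional $\overline{\mb{Q}}_p$-vector spaces (equipped with a Frobenius structure), and to show that applying $\pi_+$ multiplies the underlying $\overline{\mb{Q}}_p$-vector space by $[k':k]=|\mathrm{Gal}(k'/k)|$, compatibly with taking cohomology in each degree.

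The key step is therefore the local computation of $\pi_+$ at the level of points. First I would recall that for the closed immersion / finite étale situation $\mathrm{Spec}(k')\to\mathrm{Spec}(k)$ the functor $\pi_+$ agrees with $\pi_!$ (since $\pi$ is finite, indeed proper), and that on holonomic modules over a point it is computed by the corresponding operation on the coefficient rings: concretely, an object of $\mr{Hol}(\mathrm{Spec}(k'))$ is a finite-dimensional vector space over the overconvergent coefficient ring associated to $k'$, and pushing forward to $\mathrm{Spec}(k)$ amounts to Weil restriction of scalars along the degree-$[k':k]$ extension of residue fields. Using the base-tuple conventions fixed in Section~\ref{recallisoc} and Lemma~\ref{lem:k} (which already encode how the categories behave under finite extension of the base field), the Frobenius-module over $k'$, when restricted to $k$, becomes the direct sum indexed by $\mathrm{Gal}(k'/k)$ of its Galois conjugates; on the level of $\overline{\mb{Q}}_p$-vector spaces (after applying the fixed isomorphism $\iota$ and forgetting Frobenius) this is simply a direct sum of $[k':k]$ copies, which is exactly the asserted formula $H^*(X/k,M)\cong\bigoplus_{\sigma\in\mathrm{Gal}(k'/k)} H^*(X/k',M)$.

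Since $\pi$ has relative dimension $0$, its pushforward is t-exact on $\mr{Hol}$, so the isomorphism is compatible with the t-structure and the displayed formula holds degree by degree; this last point lets me pass from the derived statement $\epsilon_+M\cong\bigoplus_\sigma \epsilon'_+M$ (inside $D^{\mr{b}}_{\mr{hol}}(\mathrm{Spec}(k))$, where the Galois action permutes the summands) to the stated cohomology isomorphism by applying $H^i$. The main obstacle I anticipate is bookkeeping rather than conceptual: one has to check carefully that $\pi_+$ really does coincide with the naive Weil restriction of scalars in the formalism of \cite{A} — that is, that the six-functor machinery over a point reproduces the expected linear algebra — and that this identification is compatible with the fully faithful embedding $\mr{Isoc}^\dag\hookrightarrow\mr{Hol}[-d]$ and with the base-tuple change of Lemma~\ref{lem:k}. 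Once the point-level computation of $\pi_+$ is pinned down, the rest is formal.
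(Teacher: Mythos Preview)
Your proposal is correct and follows essentially the same route as the paper: factor the structural morphism through $\mr{Spec}(k')$, use compatibility of pushforward with composition (the paper phrases this via the equivalence $D^{\mr{b}}_{\mr{hol},\mf{T}_k}(X)\cong D^{\mr{b}}_{\mr{hol},\mf{T}_{k'}}(X)$ of \cite[Cor.~1.4.12]{A} and its compatibility with $\epsilon_+$), and reduce to the case $X_0=\mr{Spec}(k')$, which the paper dismisses as ``easy to check'' and you spell out as the Weil-restriction computation of $\pi_+$ on Frobenius modules over a point.
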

\begin{proof}
 By \cite[Cor.~1.4.12]{A}, we  have an equivalence
 $D^{\mr{b}}_{\mr{hol},\mf{T}_k}(X)\cong
 D^{\mr{b}}_{\mr{hol},\mf{T}_{k'}}(X)$, which is compatible with
 push-forwards. Thus, it reduces to showing the lemma for
 $X_0=\mr{Spec}(k')$, in which  case it is easy to check.
\end{proof}

\subsection{}
\label{eigenpol}
Let $X_0$ be a smooth scheme over $k$. Let $x\in X_0(k')$ for some
finite extension $k'$ of $k$, and $i_x\colon
\mr{Spec}(k')\rightarrow X_0$ be the corresponding morphism. We have
the pull-back functor $i_x^+\colon
\mr{Isoc}(X_0)\rightarrow\mr{Isoc}^\dag(\mr{Spec}(k'))$. The category
$\mr{Isoc}^\dag(\mr{Spec}(k'))$ is equivalent to the category of finite
dimensional $\overline{\mb{Q}}_p$-vector spaces $V$ endowed with an
automorphism  $\Phi$ of $V$.
Given $M_0\in\mr{Isoc}^\dag(X_0)$, we denote by $f_x(M_0,t) \in
\overline{\mb{Q}}_p[t]$ the eigenpolynomial of the automorphism $\Phi_x$
of $i^+_x(M_0)$, namely
\begin{equation*}
 f_x(M_0,t)=\det\bigl(1-t\Phi_x\mid i_x^+(M_0)\bigr)
  \in\overline{\mb{Q}}_p[t].
\end{equation*}
Similarly, for a lisse Weil $\overline{\mb{Q}}_\ell$-sheaf
${}_{\ell}M_0$ on $X_0$, we denote the characteristic polynomial
of the geometric Frobenius $F_x$ at $x$ by $f_x({}_{\ell}M_0,t)$:
\begin{equation*}
 f_x({}_{\ell}M_0,t)=\det\bigl(1-tF_x\mid
  {}_{\ell}M_{0,\overline{x}}\bigr)
  \in\overline{\mb{Q}}_{\ell}[t],
\end{equation*}
where $\overline{x}$ is a $\overline k$-point above $x$.

\begin{dfn*}
 Let $M_0\in\mr{Isoc}^\dag(X_0)$.
 \begin{enumerate}
  \item Let $\iota\colon\overline{\mb{Q}}_p\rightarrow\mb{C}$ be a field
	isomorphism.
	The isocrystal $M_0$ is said to be {\em $\iota$-pure of weight
	$w\in\mb{C}$} if the absolute value of any root of
	$f_x(M_0,t)\in\overline{\mb{Q}}_p[t]
	\xrightarrow[\iota]{\sim}\mb{C}[t]$ equals
	to $q_x^{w/2}$ for any $x\in X_0(k')$ with residue field of
	cardinality $q_x$.
	The isocrystal $M_0$ is said to be {\em $\iota$-pure} if it is
	$\iota$-pure of weight $w$ for some $w$.
	
  \item The isocrystal $M_0$ is said to be {\em algebraic} if
	$f_x(M_0,t) \in \overline{\mb{Q}}[t] \subset
	\overline{\mb{Q}}_p[t]$ for any $x\in X_0(k')$.
	    
  \item Given an isomorphism $\sigma\colon\overline{\mb{Q}}_p
	\rightarrow\overline{\mb{Q}}_\ell$ for a prime $\ell \neq
	p$, the isocrystal $M_0$ is said to be {\em $\sigma$-unit-root}
	if any root of $\sigma f_x(M_0,t)$ is an $\ell$-adic unit for
	any $x\in X_0(k')$.
       
  \item Given an automorphism $\sigma$ of $ \overline{\mb{Q}}_p$, a
	{\em $\sigma$-companion to  $M_0$} is an
	$M_0^\sigma \in\mr{Isoc}^\dag(X_0)$ such that $\sigma
	f_x(M_0,t) =f_x(M_0^\sigma, t)$ 
	for any $x\in X_0(k')$. One says that $M^\sigma_0$ is a
	{\em $p$-companion of $M_0$}.
	     
  \item Given an isomorphism $\sigma\colon\overline{\mb{Q}}_p
	\rightarrow\overline{\mb{Q}}_\ell$ for a prime $\ell \neq
	p$, a {\em $\sigma$-companion of $M_0$} is a lisse Weil
	$\overline{\mb{Q}}_\ell$-sheaf
	${}_{\ell}M_0^\sigma$ on $X_0$ such that $\sigma
	f_x(M_0,t)=f_x({}_{\ell}M^\sigma_0,t)$ for any $x\in X_0(k')$.
	One says that ${}_{\ell}M_0^\sigma$ is an {\em $\ell$-companion
	of $M_0$}. Abusing notations, we also write ${}_\ell M_0$ for an
	$\ell$-companion.
 \end{enumerate}
\end{dfn*}

\begin{rem*}
 Note that the eigenpolynomial is independent of the choice of the base
 field $k$. Therefore, the notion of algebraicity and $\ell$-adic
 companions are ``absolute'', they do not depend on the base field.
\end{rem*}

\subsection{}
\label{Delthmrec}
We recall the following theorem by Deligne.

\begin{thm*}[{\cite[Prop.~1.9 + Rmk.~3.10]{De}}]
 Let $X_0$ be a connected scheme of finite type over $k$. Assume given a
 function $t_n\colon
 X_0(k_n)\rightarrow\overline{\mb{Q}}_{\ell}[t]$ such that
 \begin{quote}
  {\normalfont (*)} for any  morphism $\varphi\colon
  C_0\rightarrow X_0$ from a smooth curve $C_0$, there exists a lisse
  Weil\footnote{In {\it ibid.}, Deligne assumes 
  the $\overline{\mb{Q}}_\ell$-sheaf to be lisse. However, in an
  email to the authors, he pointed out that it is enough to assume
  the  $\overline{\mb{Q}}_\ell$-sheaf to be a Weil sheaf, without
  changing his proof.}
  $\overline{\mb{Q}}_\ell$-sheaf ${}_{\ell}M[\varphi]$ on $C_0$
  such that for any $n$ and $x\in C_0(k_n)$, we have
  \begin{equation*}
   f_x({}_{\ell}M[\varphi],t)=t_n(\varphi(x)).
  \end{equation*}
 \end{quote}

\begin{itemize}
 \item[(i)] Assume there exists $x\in X_0(k_n)$ such that
	    $t_n(x)\in\overline{\mb{Q}}[t] \subset
	    \overline{\mb{Q}}_\ell[t]$ (resp.\ any root of $t_n(x)=0$ is
	    $\ell$-adic unit).
	    Then $t_n(x)\in\overline{\mb{Q}}[t] \subset
	    \overline{\mb{Q}}_\ell[t]$ for any $x\in X_0(k_n)$ (resp.\
	    any root of $t_n(x)=0$ is $\ell$-adic unit for any $x\in
	    X_0(k_n)$).
	   
 \item[(ii)] Assume that there exists a finite  \'{e}tale cover
	     $X'\rightarrow X$ such that for any $\varphi$ as in
	     {\normalfont (*)} above, the pull-back of ${}_\ell
	     M[\varphi]$ to $C' =X'\times_X C_0$ is
	     tamely ramified. If in addition, the assumption of (i) holds, then there
	     exists a number field $E$ in
	     $\overline{\mathbb{Q}}_\ell$ such that $t_n$
	     takes value in $E[t]$ for any $n$.
\end{itemize}
\end{thm*}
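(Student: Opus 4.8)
The plan is to deduce both assertions from the one-dimensional case — the theory of lisse Weil $\overline{\mb{Q}}_\ell$-sheaves on curves ($L$-functions, Grothendieck--Ogg--Shafarevich, Chebotarev density, and Weil~II) — the bridge being a Bertini-type connectivity statement. The geometric input I would record first: because $X_0$ is connected and of finite type over $k$, any finite set $S$ of closed points of $X_0$ lies in the image of a morphism $\varphi\colon C_0\to X_0$ from a smooth irreducible curve $C_0$ over $k$; one reduces to $X_0$ affine by working chart by chart along a connected chain of affine opens joining the points of $S$, embeds each chart in an affine space, slices by general hyperplanes through $S$ until reaching an irreducible curve, and normalizes. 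Granting this, the whole comparison problem is transported onto a single curve: given two closed points and a curve $\varphi\colon C_0\to X_0$ whose image contains both, the polynomials attached to them by $t_\bullet$ are two among the local factors of the one lisse Weil sheaf ${}_\ell M[\varphi]$ on $C_0$ (and for points of different degrees one may pass to suitable powers of the relevant Frobenius elements).

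For (i) I would fix $x_0\in X_0(k_n)$ satisfying the hypothesis and an arbitrary $x\in X_0(k_n)$, pick by the connectivity lemma a smooth curve $\varphi\colon C_0\to X_0$ through lifts $c_0,c$ of $x_0,x$, and set $\mc{G}:={}_\ell M[\varphi]$. Compatibility of the family $\{{}_\ell M[\varphi']\}$ under pull-back shows, by Chebotarev, that $\mc{G}$ is determined up to semisimplification; after semisimplifying, $\mc{G}$ is a finite direct sum of irreducible lisse Weil sheaves on $C_0$, each of which is $\iota$-pure by Weil~II. The $L$-function $L(C_0,\mc{G},t)=\prod_i\det\bigl(1-tF\mid H^i_c(C_{0,\overline k},\mc{G})\bigr)^{(-1)^{i+1}}$ is then a rational function whose reciprocal zeros and poles have controlled archimedean absolute values, and the implication ``$t_n(x_0)=f_{c_0}(\mc{G},t)$ lies in $\overline{\mb{Q}}[t]$ (resp.\ has $\ell$-adic unit roots) $\Rightarrow$ the same for $f_{c}(\mc{G},t)$'' is the curve-theoretic core of the statement, which I would invoke from Deligne's treatment of the curve case rather than reprove. (For the determinant — equivalently in rank one — the $\ell$-adic unit half is already visible directly: a suitable power of $\rho(F_{c})\rho(F_{c_0})^{-1}$ lies in the image of the geometric fundamental group, which is compact, hence in $\mr{GL}_r(\mc{O}_\lambda)$, so $\det\rho(F_c)$ and $\det\rho(F_{c_0})$ are simultaneously units.)

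For (ii), part (i) already provides that every $t_n(x)$ has coefficients in $\overline{\mb{Q}}$; what remains is to trap them all inside one number field, and this is where tameness enters. For any $\varphi\colon C_0\to X_0$ set $C'_0:=X'_0\times_{X_0}C_0\to C_0$, a finite \'etale cover of fixed degree, and let $\overline{C'_0}$ be its smooth compactification; tameness of ${}_\ell M[\varphi]|_{C'_0}$ makes all Swan conductors vanish, so Grothendieck--Ogg--Shafarevich bounds $\sum_i\dim H^i_c\bigl(\overline{C'_0}\otimes\overline k,\,{}_\ell M[\varphi]|_{C'_0}\bigr)$ in terms only of $r=\deg_t t_n$, the genus of $\overline{C'_0}$, and the number of points of $\overline{C'_0}\setminus C'_0$. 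Choosing $C_0$ as a complete intersection of bounded degree inside a fixed projective compactification of a chart of $X_0$ bounds those last invariants independently of $\varphi$; hence the $L$-function of ${}_\ell M[\varphi]|_{C'_0}$, and by descent along $C'_0/C_0$ that of ${}_\ell M[\varphi]$, is a rational function of degree $\le D$ for a $D$ not depending on $\varphi$. Together with the purity furnished by Weil~II this bounds the archimedean absolute values of the (now algebraic) reciprocal roots, and, combined with a bound on their degree over $\mb{Q}$ coming from the curve case and the fact that the number fields in play are unramified outside a fixed finite set of primes, Hermite--Minkowski leaves only finitely many possibilities: all of them, hence all the $t_n(x)$, lie in one number field $E$. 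Using the connectivity lemma to place any finite collection of closed points on a single curve then gives the assertion for all $n$.

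The geometric reduction is routine Bertini-and-normalization bookkeeping, and the real weight of the theorem sits in the one-dimensional case. The step I expect to be the main obstacle is making the estimates in (ii) genuinely uniform in $\varphi$ — producing a single $D$, equivalently a single number field $E$, that works for every curve $C_0\to X_0$ — which relies on tameness to annihilate the Swan contribution in Grothendieck--Ogg--Shafarevich, on controlling the genus and the number of punctures of the slicing curves, and then on Weil~II purity together with Hermite--Minkowski to finish.
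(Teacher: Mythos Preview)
The paper does not supply its own proof of this statement: it is stated as a citation of Deligne, \cite[Prop.~1.9 and Rmk.~3.10]{De}, followed only by a remark explaining how Deligne's original formulation (in terms of Weil numbers of weight~$0$) adapts to the algebraicity and $\ell$-adic unit variants. So there is no in-paper argument to compare against; what you have written is effectively a sketch of Deligne's proof itself.

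As such a sketch, your outline is broadly faithful to the shape of Deligne's argument --- the connectivity-by-curves reduction for (i), and for (ii) the combination of tameness $+$ Grothendieck--Ogg--Shafarevich to bound the degree of the $L$-function uniformly, purity, and then a Hermite--Minkowski-type finiteness to trap all the Frobenius traces in a single number field. Two places deserve tightening. First, in (i) you write that you would ``invoke Deligne's treatment of the curve case rather than reprove'' the implication on a single curve; but that curve case is essentially the entire content (it rests on Lafforgue's theorem for $\mr{GL}_r$ over a curve, used to pass from algebraic determinant to algebraic eigenvalues), so you should name Lafforgue explicitly rather than appeal to Deligne circularly. Second, in (ii) the genuinely delicate point you flag --- getting a bound on the genus and number of punctures of $C'_0$ that is uniform over all $\varphi$ while still allowing the curve to pass through any prescribed pair of closed points --- is exactly where Deligne works hardest (he produces a controlled supply of curves, not arbitrary complete intersections); your sentence ``choosing $C_0$ as a complete intersection of bounded degree'' glosses over the tension between bounded degree and hitting arbitrary points, and would need to be made precise along the lines of \cite[\S3]{De} or \cite[\S2]{Dr}.
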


\begin{rem*}
 In \cite[Prop.~1.9]{De}, the assertion is formulated slightly
 differently: if there exists one closed point
 $x\in X_0(k_n)$ such that any root of $t_n(x)$ is a Weil
 number of weight $0$, then the same property holds for any points of $X_0$.
 The argument used by Deligne shows that if $t_n(x)$ is algebraic
 (resp.\ any root of $t_n(x)=0$ is $\ell$-adic unit) at one closed
 point $x$, then it is algebraic (resp.\ any root is $\ell$-adic unit)
 at all closed points.
\end{rem*}

\subsection{}
\label{keytannak}
We shall use the  following lemma on Tannakian categories to  show our
Lefschetz  theorem~\ref{thm:lefschetz}. 

\begin{lem*}
 Let $\Phi \colon (\mc{T}, \omega=\omega' \circ \Phi) \rightarrow
 (\mc{T}', \omega')$ be a tensor functor between neutral  Tannakian
 categories over $\overline{\mb{Q}}_p$ (or any field of characteristic
 $0$). If
 \begin{quote} 
  $(\star )$\  for any rank $1$ object $L\in\mc{T}'$, there exists an
  integer $m>0$ such that $L^{\otimes m}$ is in the essential image of
  $\Phi$,
 \end{quote}
 then the induced functor
 $\Phi^*\colon\pi_1(\mc{T}', \omega')\rightarrow\pi_1(\mc{T},\omega)$ is
 faithfully flat if and only if $\Phi$ is fully faithful.
\end{lem*}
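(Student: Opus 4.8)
The plan is to pass to Tannaka groups and then, using $(\star)$, to produce a vector fixed by the image of $\Phi^{*}$. Write $G:=\pi_1(\mc{T},\omega)$ and $G':=\pi_1(\mc{T}',\omega')$, so that $\mc{T}\simeq\mr{Rep}(G)$, $\mc{T}'\simeq\mr{Rep}(G')$, and $\Phi$ is restriction along $\Phi^{*}\colon G'\to G$. The implication ``$\Phi^{*}$ faithfully flat $\Rightarrow\Phi$ fully faithful'' is part of \cite[Prop.~2.21]{DM} and needs no hypothesis. For the converse, assume $\Phi$ fully faithful and $(\star)$. Let $H\subseteq G$ be the scheme-theoretic image of $\Phi^{*}$, a closed subgroup through which $\Phi^{*}$ factors as $G'\twoheadrightarrow H\hookrightarrow G$ with $G'\twoheadrightarrow H$ faithfully flat; by \cite[Prop.~2.21]{DM} the essential image of $\Phi$ is stable under subobjects if and only if $\Phi^{*}$ is faithfully flat, which here means exactly $H=G$. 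So it suffices to prove $H=G$. We record two translations. (a) Full faithfulness of $\Phi$ is equivalent to $E^{G}=E^{G'}$ for every finite-dimensional $G$-representation $E$ (apply full faithfulness to morphisms out of the unit object $\mathbf{1}$); since $E^{G}\subseteq E^{H}\subseteq E^{G'}$, it follows that $E^{H}=E^{G}$ for all such $E$. (b) Any character $\psi$ of $H$ becomes, after pull-back along $G'\twoheadrightarrow H$, a rank $1$ object of $\mc{T}'$; by $(\star)$ some $\psi^{m}$ ($m>0$) then equals the pull-back of a rank $1$ object of $\mc{T}$, i.e.\ of a character of $G$, and since $X^{*}(H)\hookrightarrow X^{*}(G')$ (as $G'\twoheadrightarrow H$ is faithfully flat) this means $\psi^{m}$ extends to a character of $G$.

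Assume first that $G$ is a linear algebraic group. By Chevalley's theorem there are a finite-dimensional $G$-representation $V$ and a line $\ell=\overline{\mb{Q}}_p\cdot v\subseteq V$ with $H=\{g\in G:g\ell=\ell\}$; let $\psi\colon H\to\mb{G}_m$ be the character by which $H$ acts on $\ell$, and by (b) pick $m>0$ and a character $\Psi$ of $G$ with $\Psi|_{H}=\psi^{m}$. Set $W:=V^{\otimes m}\otimes\ell_{\Psi^{-1}}$, where $\ell_{\Psi^{-1}}$ is the $1$-dimensional $G$-representation with character $\Psi^{-1}$, and let $w:=v^{\otimes m}\otimes e\in W$ with $e$ a basis of $\ell_{\Psi^{-1}}$. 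For $h\in H$ one computes $h\cdot w=\psi(h)^{m}\Psi(h)^{-1}w=w$, so $w\in W^{H}$; by (a), $W^{H}=W^{G}$, so $G$ fixes $w$. Conversely, if $g\in G$ fixes $w$ then $(gv)^{\otimes m}\in\overline{\mb{Q}}_p\cdot v^{\otimes m}$, hence $gv\in\ell$ (a pure tensor $u^{\otimes m}$ lying in $\overline{\mb{Q}}_p\cdot v^{\otimes m}$ has $u\in\overline{\mb{Q}}_p\cdot v$), i.e.\ $g\in H$. Therefore $G\subseteq H$ and $H=G$. This twisting is the crux, and $(\star)$ is indispensable in it: without $(\star)$ the conclusion is false, the standard example being $B\subset\mr{SL}_2$, where $\Phi$ is fully faithful but the character of $B$ on the sub-line of the standard representation has no power extending to $\mr{SL}_2$.

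For general $G$, write $G=\varprojlim_{\alpha}G_{\alpha}$ as a filtered inverse limit of linear algebraic quotients and let $H_{\alpha}\subseteq G_{\alpha}$ be the image of $H$ (equivalently of $G'$); then $H=\varprojlim_{\alpha}H_{\alpha}$, so $H=G$ provided $H_{\alpha}=G_{\alpha}$ for all $\alpha$, and $E^{H_{\alpha}}=E^{G_{\alpha}}$ for every $G_{\alpha}$-representation $E$ by (a). Suppose $H_{\beta}\subsetneq G_{\beta}$ for some $\beta$. Apply Chevalley at level $\beta$: $H_{\beta}=\{g\in G_{\beta}:g\ell=\ell\}$ for a line $\ell=\overline{\mb{Q}}_p\cdot v$ in a $G_{\beta}$-representation $V$, with character $\psi$ on $\ell$. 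By (b), $\psi^{m}$ extends to a character of $G$, hence to a character $\Psi$ of $G_{\gamma}$ for some $\gamma\ge\beta$, with $\Psi|_{H_{\gamma}}$ equal to $\psi^{m}$ pulled back along $H_{\gamma}\twoheadrightarrow H_{\beta}$. Carrying out the twisting construction at level $\gamma$ with this $\ell$ (viewed inside the $G_{\gamma}$-representation $V$) and this $\Psi$ yields $w\in W^{H_{\gamma}}=W^{G_{\gamma}}$, so $G_{\gamma}$ fixes $w$; but any element fixing $w$ sends $v$ into $\ell$, hence lies in $\{g\in G_{\gamma}:g\ell=\ell\}$, which is the preimage of $H_{\beta}$ under $G_{\gamma}\twoheadrightarrow G_{\beta}$. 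Thus this preimage is all of $G_{\gamma}$, forcing $H_{\beta}=G_{\beta}$, a contradiction. Hence $H=G$.

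The argument is short once the twisting step of the second paragraph is in hand; the part that needs care is precisely that construction — recognising that $(\star)$ is exactly what lets one replace a line stabilised by $H$ by a vector fixed by $H$ — together with the routine but slightly fussy level-matching $\beta\le\gamma$ in the inverse-limit reduction of the third paragraph.
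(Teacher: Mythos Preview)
Your proof is correct and follows essentially the same approach as the paper's: reduce to algebraic groups, apply Chevalley's theorem to realise the subgroup as the stabiliser of a line, and use $(\star)$ together with full faithfulness to show the whole group stabilises that line. The differences are only in packaging: where the paper lifts the inclusion $L^{\otimes m}\hookrightarrow\Phi(N_V)^{\otimes m}$ to $\mc{T}$ directly via full faithfulness, you twist by $\Psi^{-1}$ to turn the line into an $H$-fixed vector and then invoke $E^{H}=E^{G}$; and where the paper reduces to algebraic groups by passing to $\langle M\rangle$, you run an explicit inverse-limit argument with the level-matching $\beta\le\gamma$. Your counterexample $B\subset\mr{SL}_2$ showing the necessity of $(\star)$ is a nice addition not present in the paper.
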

\begin{proof}
 By \cite[Prop.~2.21 (a)]{DM}, we just have to show the ``if'' part,
 which itself is a slight refinement of \cite[Rmk.~2.29]{DM}. The
 functor $\Phi$ is fully faithful if and only  its restriction to
 $\langle M\rangle$, for every object $M\in \mc{T}$,
 induces an equivalence with $\langle \Phi(M) \rangle$, where $\langle
 M\rangle$ is the full Tannakian subcategory spanned by $M$.  By Tannaka
 duality, this is equivalent to $\Phi^*\colon\pi_1( \langle \Phi(M)
 \rangle,\omega')\to \pi_1( \langle M \rangle,\omega)$ being an
 isomorphism, which by \cite[Prop.~2.21 (b)]{DM} is a closed embedding
 of group schemes of finite type over $\overline{\mathbb{Q}}_p$.
 Chevalley's theorem (\cite[Thm.~1.15]{Bri09},
 \cite[Prop.~3.1.(b)]{Del82}) asserts that
 $\pi_1( \langle \Phi(M) \rangle,\omega')$ is the stabilizer of a line
 $l$ in a finite dimensional representation $V$ of
 $\pi_1( \langle M \rangle,\omega)$.
 Let $N_V$ (resp.\ $L$) be the Tannakian dual of $V$ in $\mc{T}$
 (resp.\ $l$ in $\mc{T}'$). Then the $ \pi_1( \langle \Phi(M)
 \rangle,\omega')$-equivariant inclusion $l\subset V$, induces the
 inclusion $i\colon L\subset \Phi(N_V)$ in $\mc{T}'$.
 By $(\star )$, there is an integer $m>0$, and an object
 $\widetilde{L}\in \mc{T}$, such that  $L^{\otimes
 m}=\Phi(\widetilde{L})$.
 By full faithfulness, there is a uniquely defined inclusion
 $j\colon \widetilde{L}\subset N_V^{\otimes m}$ such that
 $i^{\otimes m}=\Phi(j)\colon L^{\otimes m}\subset \Phi(N_V^{\otimes
 m})=\Phi(N_V)^{\otimes m}$. Thus the stabilizer
 of $l^{\otimes m}\subset V^{\otimes m}$ is $\pi_1( \langle M
 \rangle,\omega)$.
 On the other hand, if $g\in \pi_1( \langle M
 \rangle,\omega)(\overline{\mathbb{Q}}_p)$ acts on
 $l^{\otimes m}\subset V^{\otimes m}$ with eigenvalue $\lambda \in
 \overline{\mathbb{Q}}_p$, it acts on $l\subset V$ with
 eigenvalue $\frac{\lambda}{m} \in \overline{\mathbb{Q}}_p$. This
 implies that $\pi_1( \langle \Phi(M) \rangle,\omega')$ is the
 stabilizer of $l^{\otimes m}$ in $V^{\otimes m}$, thus
 $\Phi^*\big( \pi_1( \langle \Phi(M) \rangle,\omega')\big)=
 \pi_1(\langle M \rangle,\omega)$. This finishes the proof.
\end{proof}

\subsection{}
Let $X_0$ be a smooth {\em geometrically connected} scheme over $k$.
Take a closed point $x_0\in X_0$. The pull-back $i_x^+$ functor induces
a fiber functor
$\mr{Isoc}^\dag(X)\rightarrow\mr{Vec}_{\overline{\mb{Q}}_p}$, which
endows the Tannakian category $\mr{Isoc}^\dag(X)$ with a
neutralization. The fundamental group
is denoted by $\pi_1^{\mr{isoc}}(X,x)$. This is also independent of the
base field. The detailed construction is written in
\cite[2.4.16]{A}. For $M\in\mr{Isoc}^\dag(X)$, we denote by
$\left<M\right>$ the Tannakian
subcategory of $\mr{Isoc}^\dag(X)$ generated by $M$ ({\it i.e.}\ the
full subcategory consisting of subquotient objects of $M^{\otimes
m}\otimes M^{\vee\otimes m'}$ and their direct sums).
Its fundamental group is denoted by $\mr{DGal}(M,x)$, but as the base
point chosen is irrelevant for the further discussion, we just write
$\mr{DGal}(M)$.

\begin{prop} \label{prop:H0}
 Let $M\in\mr{Isoc}^\dag(X)$.
 Assume we have a morphism $f\colon Y_0\rightarrow X_0$ between
 smooth geometrically connected schemes such that
 \begin{quote}
  for any $N\in\left<M\right>$, the induced homomorphism
  \begin{equation}
   \label{fullfaith}
    H^0(X/k,N)\rightarrow H^0(Y/k,f^+N)
  \end{equation}
  is an isomorphism.
 \end{quote}
 Then the homomorphism
 $\mr{DGal}(f^+M)\rightarrow\mr{DGal}(M)$ is an isomorphism.
\end{prop}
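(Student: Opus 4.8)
The plan is to realise $\mr{DGal}(f^+M)\to\mr{DGal}(M)$ as the homomorphism of Tannaka groups attached to a tensor functor and to apply the lemma of \S\ref{keytannak}. The pull-back $f^+\colon\mr{Isoc}^\dag(X)\to\mr{Isoc}^\dag(Y)$ is an exact $\overline{\mb{Q}}_p$-linear tensor functor; being exact and monoidal it carries $\langle M\rangle$ into $\langle f^+M\rangle$ and restricts to a tensor functor $\Phi:=f^+\colon\langle M\rangle\to\langle f^+M\rangle$ of neutral Tannakian categories over $\overline{\mb{Q}}_p$. Choosing a closed point $y\in Y_0$ and setting $x:=f(y)$ (a closed point of $X_0$, since $f$ is of finite type over the finite field $k$), one has $i_y^+\circ f^+\cong i_x^+$ on underlying $\overline{\mb{Q}}_p$-vector spaces, because pulling an isocrystal on a point back along a finite extension of finite fields does not change its underlying vector space. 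Hence $\Phi$ is compatible with the neutralisations and $\Phi^*$ is precisely the map $\mr{DGal}(f^+M)\to\mr{DGal}(M)$.

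First I would check that $\Phi$ is fully faithful. For $N_1,N_2\in\langle M\rangle$ one has $N_1^\vee\otimes N_2\in\langle M\rangle$, and since $H^0(X/k,-)$ computes $\mr{Hom}_{\mr{Isoc}^\dag(X)}(\mathbf{1}_X,-)$ (its degree-zero part being the horizontal sections; see \S\ref{arithdmodrev}),
\[
 \mr{Hom}_{\langle M\rangle}(N_1,N_2)=\mr{Hom}_{\mr{Isoc}^\dag(X)}(\mathbf{1}_X,\,N_1^\vee\otimes N_2)=H^0(X/k,\,N_1^\vee\otimes N_2).
\]
Because $f^+$ commutes with duals and tensor products, the same computation on $Y$ gives $\mr{Hom}_{\langle f^+M\rangle}(f^+N_1,f^+N_2)=H^0(Y/k,\,f^+(N_1^\vee\otimes N_2))$, and under these identifications the map induced by $\Phi$ is the isomorphism \eqref{fullfaith} for $N=N_1^\vee\otimes N_2$. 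Thus $\Phi$ is fully faithful.

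Next I would verify condition $(\star)$ of the lemma of \S\ref{keytannak}. A rank $1$ object $L\in\langle f^+M\rangle$ is in particular a rank $1$ object of $\mr{Isoc}^\dag(Y)$; by definition it admits an $i$-th Frobenius structure for some $i>0$, hence underlies a rank $1$ overconvergent $F$-isocrystal on $Y_0$. Such an isocrystal is torsion: for curves this is \cite[Lem.~6.1]{AbeLL}, and in general the associated character of the abelianised $p$-adic fundamental group is Frobenius-invariant while Frobenius has no eigenvalue $1$ on its geometric part, so the character has finite order (alternatively, one reduces to the case of curves). Therefore $L^{\otimes m}\cong\mathbf{1}=f^+(\mathbf{1}_X)$ lies in the essential image of $\Phi$ for some $m>0$, and $(\star)$ holds.

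The lemma of \S\ref{keytannak} now shows that $\Phi^*\colon\mr{DGal}(f^+M)\to\mr{DGal}(M)$ is faithfully flat. On the other hand $\langle f^+M\rangle$ is generated, as a Tannakian category, by $f^+M=\Phi(M)$, so every object of $\langle f^+M\rangle$ is a subquotient of one of the form $\Phi(N)$; by \cite[Prop.~2.21 (b)]{DM} this forces $\Phi^*$ to be a closed immersion. A faithfully flat closed immersion of affine group schemes of finite type over $\overline{\mb{Q}}_p$ is an isomorphism, which gives the claim. The one genuinely arithmetic ingredient is the verification of $(\star)$, i.e.\ the torsion of rank $1$ overconvergent $F$-isocrystals: full faithfulness alone would not suffice for the conclusion — for instance a proper Borel subgroup of a reductive $\mr{DGal}(M)$ satisfies the $H^0$-hypothesis while violating the isomorphism assertion — so this is where the proof genuinely uses $p$-adic input; everything else is the Tannakian translation of \eqref{fullfaith} together with the lemma of \S\ref{keytannak}.
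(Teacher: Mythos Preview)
Your argument is correct and mirrors the paper's: both verify full faithfulness of $f^+\colon\langle M\rangle\to\langle f^+M\rangle$ via the $H^0$-hypothesis, check condition $(\star)$ of Lemma~\ref{keytannak} by showing that every rank~$1$ object of $\mr{Isoc}^\dag(Y)$ is torsion (the paper invokes \cite[Lem.~6.1]{AbeLL} directly for arbitrary smooth schemes, so your hedge about curves is unnecessary), and then combine the resulting faithful flatness with the closed immersion from \cite[Prop.~2.21~(b)]{DM}. The only cosmetic slip is that $L$ descends to some $Y_n$ rather than to $Y_0$; the paper makes this explicit by writing $N_n\in\mr{Isoc}^\dag(X_n)$ and twisting by a rank~$1$ object $L_0\in\mr{Isoc}^\dag(\mr{Spec}(k))$ that becomes trivial after passing to $\overline{k}$.
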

\begin{proof}
 By \cite[Prop.~2.21 (b)]{DM}, the homomorphism in question is a closed
 immersion.
 Let $N,N'\in\left<M\right>$. We have
 \begin{equation*}
  \mr{Hom}(N,N')\cong H^0\bigl(X,\shom(N,N')\bigr)\xrightarrow{\sim}
   H^0\bigl(X,f^+\shom(N,N')\bigr)\cong\mr{Hom}\bigl(f^+N,f^+N'\bigr),
 \end{equation*}
 where the second isomorphism holds
 by  assumption
 since $\shom(N,N')\in\left<M\right>$. This implies that the functor
 $f^+\colon\left<M\right>\rightarrow\left<f^+M\right>$ is fully
 faithful.  By Lemma \ref{keytannak}, it suffices to show that for any rank $1$
 object $N$ in $\mr{Isoc}^\dag(X)$, there exists an integer $m>0$ such
 that $N^{\otimes m}$ is trivial. By definition, there exists an integer
 $n\geq 0$ such that $N$ is the pull-back of
 $N_n\in\mr{Isoc}^\dag(X_n)$. Then by \cite[Lem.~6.1]{AbeLL}, there
 exists a rank $1$ isocrystal $L_0\in\mr{Isoc}^\dag(\mr{Spec}(k))$ and
 an integer $m>0$ such that $(N_n\otimes L_n)^{\otimes m}$ is trivial. As $L$ is
 trivial in $\mr{Isoc}^\dag(X) $,  $N^{\otimes m}$ is trivial as well. 
\end{proof}

\begin{rem*}
 Since we use class field theory in the proof, our argument works only
 when the base field is finite.
\end{rem*}

\begin{cor}
 \label{Lefarith}
 Let $f\colon Y_0\rightarrow X_0$ be a morphism between smooth schemes
 over $k$. Assume that $X_0$ is geometrically connected over $k$. 
 Let $M_0\in\mr{Isoc}^\dag(X_0)$ such that for any $N\in\left<M\right>$,
 the homomorphism  (\ref{fullfaith}) is an
 isomorphism. Then the functor $f^+$ induces an equivalence of
 categories $\left<M_0\right>\xrightarrow{\sim}\left<f^+M_0\right>$.
\end{cor}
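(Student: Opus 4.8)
The plan is to show that $f^{+}\colon\langle M_{0}\rangle\to\langle f^{+}M_{0}\rangle$ is fully faithful and essentially surjective. Applying the hypothesis to the unit object $N=\mathbf 1\in\langle M\rangle$ shows that $H^{0}(X/k,\mathbf 1)\to H^{0}(Y/k,\mathbf 1)$ is an isomorphism; since $X$ is connected this forces $Y$ to be connected, so $Y_{0}$ is geometrically connected and Proposition~\ref{prop:H0} applies: $\mr{DGal}(f^{+}M)\to\mr{DGal}(M)$ is an isomorphism, equivalently $f^{+}\colon\langle M\rangle\xrightarrow{\sim}\langle f^{+}M\rangle$ is an equivalence of Tannakian categories over $\overline{\mb{Q}}_{p}$. (Since moreover every object of $\langle f^{+}M_{0}\rangle$ is a subquotient of $f^{+}A_{0}$ for some $A_{0}\in\langle M_{0}\rangle$, the homomorphism $\mr{DGal}(f^{+}M_{0})\to\mr{DGal}(M_{0})$ is a closed immersion by \cite[Prop.~2.21 (b)]{DM}.)

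For full faithfulness I would argue as in the proof of Proposition~\ref{prop:H0}. Given $N_{0},N_{0}'\in\langle M_{0}\rangle$, write $N,N'$ for the underlying objects of $\mr{Isoc}^{\dag}(X)$; then $\shom(N_{0},N_{0}')\in\langle M_{0}\rangle$ has underlying object $\shom(N,N')$, and since a morphism of $F$-isocrystals is the same as a $\Phi$-equivariant morphism of isocrystals one gets $\mr{Hom}(N_{0},N_{0}')=H^{0}\bigl(X/k,\shom(N,N')\bigr)^{\Phi=1}$, where $\Phi$ denotes the Frobenius induced by the $F$-structures, and likewise $\mr{Hom}(f^{+}N_{0},f^{+}N_{0}')=H^{0}\bigl(Y/k,f^{+}\shom(N,N')\bigr)^{\Phi=1}$. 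As $f$ is defined over $k$, the functor $f^{+}$ intertwines the Frobenius pull-back functors, so the isomorphism $H^{0}(X/k,\shom(N,N'))\xrightarrow{\sim}H^{0}(Y/k,f^{+}\shom(N,N'))$ provided by the hypothesis is $\Phi$-equivariant; passing to $\Phi$-invariants shows that $f^{+}$ is fully faithful on $\langle M_{0}\rangle$.

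Essential surjectivity is the point I expect to require care. Let $P_{0}\in\langle f^{+}M_{0}\rangle$, say a subquotient of $f^{+}A_{0}$ with $A_{0}\in\langle M_{0}\rangle$, and write $P$, $A$ for the underlying objects of $\mr{Isoc}^{\dag}(Y)$, $\mr{Isoc}^{\dag}(X)$. By the equivalence $f^{+}\colon\langle M\rangle\xrightarrow{\sim}\langle f^{+}M\rangle$ of the first paragraph, the subquotient $P$ of $f^{+}A$ descends uniquely to a subquotient $Q$ of $A$ in $\langle M\rangle$. The $F$-structure of $A_{0}$ carries $Q$ to another subquotient of $A$ whose image under $f^{+}$ is again $P$ --- because $f$ is defined over $k$ and $P$ underlies the $F$-subquotient $P_{0}$ of $f^{+}A_{0}$ --- so, $f^{+}$ being an equivalence on $\langle M\rangle$, the $F$-structure of $A_{0}$ fixes $Q$; hence $Q$ underlies a subquotient $Q_{0}\in\langle M_{0}\rangle$ of $A_{0}$. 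Finally $f^{+}Q_{0}$ and $P_{0}$ are $F$-subquotients of $f^{+}A_{0}$ with the same underlying subquotient $P$ of $\mr{Isoc}^{\dag}(Y)$, and an $F$-subquotient of a given $F$-isocrystal is determined by its underlying subquotient of isocrystals; therefore $f^{+}Q_{0}\cong P_{0}$, and $f^{+}$ is essentially surjective. The crux is this transport of the Frobenius structure across the geometric equivalence of Proposition~\ref{prop:H0}; alternatively, combining full faithfulness with the compatibility --- along the $k$-morphism $f$ --- of the fundamental exact sequences $1\to\pi_{1}^{\mr{isoc}}(X)\to\pi_{1}^{\mr{isoc}}(X_{0})\to\pi_{1}^{\mr{isoc}}(\mr{Spec}\,k)\to 1$ attached to $X_{0}$ and $Y_{0}$, one sees that $\mr{DGal}(f^{+}M_{0})\to\mr{DGal}(M_{0})$ is at once a closed immersion and surjective, hence an isomorphism.
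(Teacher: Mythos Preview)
Your argument is correct, but it proceeds differently from the paper. After establishing (as you do) that $Y_0$ is geometrically connected and that Proposition~\ref{prop:H0} gives an isomorphism $f_*\colon\mr{DGal}(f^+M)\xrightarrow{\sim}\mr{DGal}(M)$, the paper does not verify full faithfulness and essential surjectivity separately. Instead it observes that, for any $F$-isocrystal $N_0$ on a geometrically connected smooth $Z_0$, the Frobenius structure induces an automorphism $\varphi$ of $\mr{DGal}(N)$, and that $\langle N_0\rangle$ is identified with the category of representations $\rho$ of $\mr{DGal}(N)$ satisfying $\rho\circ\varphi=\rho$. Since the Frobenius automorphisms $\varphi_X$ and $\varphi_Y$ correspond under the isomorphism $f_*$, the equivalence $\langle M_0\rangle\simeq\langle f^+M_0\rangle$ drops out in one line.

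Your route---taking $\Phi$-invariants in $H^0$ for full faithfulness, and descending Frobenius-stable subobjects through the geometric equivalence for essential surjectivity---is really the same mechanism unpacked concretely: the transport of Frobenius you perform on individual subobjects is exactly what the paper encodes globally as $f_*\varphi_Y=\varphi_X$. The paper's formulation is slicker and makes the compatibility of Frobenius with $f_*$ the single visible ingredient; your version is more elementary and avoids invoking the description of $\langle N_0\rangle$ as $\varphi$-fixed representations, at the cost of tracking subquotient realizations $Q_2\subset Q_1\subset A$ by hand. Your final ``alternative'' via the fundamental exact sequences is closer in spirit to the paper's argument, though the paper works with $\mr{DGal}$ rather than the full $\pi_1^{\mr{isoc}}$.
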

\begin{proof}
 First, $Y_0$ is geometrically connected, since
 \begin{equation*}
  \dim_{\overline{\mb{Q}}_p}H^0(Y/k,\overline{\mb{Q}}_{p,Y})
   =
   \dim_{\overline{\mb{Q}}_p}H^0(X/k,\overline{\mb{Q}}_{p,X})
   =1.
 \end{equation*}
 Thus, the proposition tells us that
 $f_*\colon\mr{DGal}(f^+M)\xrightarrow{\sim}\mr{DGal}(M)$.

 Now, for any isocrystal $N$ on a smooth geometrically connected scheme
 $Z$ over $k$, we have an isomorphism
 $\mr{DGal}(F_Z^+N)\xrightarrow{\sim}\mr{DGal}(N)$, where $F_Z$ is the
 $s$-th Frobenius endomorphism on $Z$, defined by the functor
 $F_Z^+$. If we have a Frobenius structure $N_0$ on $N$, it induces an
 automorphism $\varphi$ of $\mr{DGal}(N)$. Using this automorphism, the
 category $\left<N_0\right>$
 is equivalent to that of representations $\rho$ of $\mr{DGal}(N)$ such
 that $\rho\circ\varphi=\rho$.

 Let us come back to the situation of the corollary. The Frobenius
 structure on $M_0$ (resp.\ $f^+M_0$) induces an automorphism
 $\varphi_X$ (resp.\ $\varphi_Y$) of the group $\mr{DGal}(M)$ (resp.\
 $\mr{DGal}(f^+M)$), and these automorphisms coincide via the
 isomorphism $f_*$. Thus, we get the claim by the interpretation of the
 categories $\left<M_0\right>$ and $\left<f^+M_0\right>$ above.
\end{proof}

\section{Cohomological Lefschetz theorem} \label{sec:coh}
This section is devoted to showing the existence of $\ell$-adic
companions for tame isocrystals.

\subsection{}
\label{ss:functors}
Let $X$ be a smooth  variety defined over a finite field $k$.
Some of the results of this section hold for objects both in
$\mr{Isoc}^\dag(X_n)$ and in $\mr{Isoc}^\dag(X)$.
For this reason,  we drop the subscripts in the notation.

\medskip
Let $Z\subset X$ be a closed subscheme, we denote by
$j\colon X\setminus Z\hookrightarrow X$ the open immersion, and $i\colon
Z\hookrightarrow X$ the closed immersion.
We introduce the following four functors from $D^{\mr{b}}_{\mr{hol}}(X)$
to itself:
\begin{equation*}
 (+Z):=j_+\circ j^+,\quad (!Z):=j_!\circ j^+,\quad
  \mb{R}\underline{\Gamma}_Z^+:=i_+\circ i^+,\quad
  \mb{R}\underline{\Gamma}_Z^!:=i_+\circ i^!.
\end{equation*}

\begin{prop}
 \label{commutation}
 Let $X$ be a smooth variety, and $Z$ be a smooth divisor.
 Let $C$ be a smooth curve, intersecting with $Z$ transversally. Let
 $M$ be in $D^{\mr{b}}_{\mr{hol}}(X)$ such that $M|_{X\setminus Z}$ is
 in $\mr{Isoc}^\dag(X\setminus Z)$ and is tame along $Z$ with
 nilpotent residues. Then we have a canonical isomorphism in
 $D^{\mr{b}}_{\mr{hol}}(X)$:
 \begin{equation*}
  (!C)(+Z)(M)\xrightarrow{\sim}(+Z)(!C)(M).
 \end{equation*}
\end{prop}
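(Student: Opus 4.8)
The plan is to reduce the statement, by localisation on $X$ and by dévissage on $C$, to a local assertion saying that the canonical extension of a tame isocrystal restricts well to a transverse divisor. Write $j_Z\colon X\setminus Z\hookrightarrow X$, $j_C\colon X\setminus C\hookrightarrow X$ for the open immersions and $i_Z,i_C$ for the complementary closed immersions, so that $(+Z)=j_{Z+}j_Z^+$ and $(!C)=j_{C!}j_C^+$; the comparison morphism $(!C)(+Z)(M)\to(+Z)(!C)(M)$ is the one obtained by composing the standard base-change transformations among the six operations of \cite{A}, and, being a local construction, it suffices to check it is an isomorphism locally on $X$. Over $X\setminus Z$ both sides restrict canonically, via base change for the open immersion $j_Z$ and the $!$-extension along $C$, to $\bigl(!(C\setminus Z)\bigr)\bigl(M|_{X\setminus Z}\bigr)$; over $X\setminus C$ both restrict canonically to $(+Z)(M)|_{X\setminus C}$; so the only point at issue is a neighbourhood of a point $p\in Z\cap C$, and $Z\cap C$ is a finite set of closed points by transversality. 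We may therefore take $X$ small and choose étale coordinates $t_1,\dots,t_d$ near $p$ with $Z=\{t_1=0\}$ and $C=\{t_2=\cdots=t_d=0\}$.

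The next step is to apply the triangulated functor $(+Z)$ to the localisation triangle $(!C)(M)\to M\to\mb{R}\underline{\Gamma}_C^+(M)\xrightarrow{+1}$, and separately to write the localisation triangle along $C$ for the object $(+Z)(M)$. This exhibits $(+Z)(!C)(M)$ and $(!C)(+Z)(M)$ as the fibres of two morphisms out of $(+Z)(M)$, one to $(+Z)\mb{R}\underline{\Gamma}_C^+(M)$ (the image of the unit $M\to\mb{R}\underline{\Gamma}_C^+(M)$) and one to $\mb{R}\underline{\Gamma}_C^+\bigl((+Z)(M)\bigr)$ (the unit at $(+Z)(M)$), so that it is enough to produce a canonical isomorphism of these two targets compatible with the units. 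Since $i_C$ is proper, proper base change along the cartesian square cut out by $i_C$ and $j_Z$ identifies $(+Z)\mb{R}\underline{\Gamma}_C^+(M)$ with $i_{C+}\bigl(j'_{C+}(j'_C)^+i_C^+M\bigr)$, where $j'_C\colon C\setminus Z\hookrightarrow C$; as $i_{C+}$ is fully faithful and $\mb{R}\underline{\Gamma}_C^+\bigl((+Z)(M)\bigr)=i_{C+}\bigl(i_C^+(j_{Z+}j_Z^+M)\bigr)$, the entire question reduces to showing that the natural base-change morphism
\begin{equation*}
 i_C^+\bigl(j_{Z+}j_Z^+M\bigr)\longrightarrow j'_{C+}(j'_C)^+\,i_C^+M
\end{equation*}
is an isomorphism, its compatibility with the units being then formal.

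It remains to prove this last isomorphism. Factoring $C$ as the transverse intersection of the smooth divisors $\{t_i=0\}$, $2\leq i\leq d$, and restricting one divisor at a time --- using at each step that the restriction of a tame overconvergent isocrystal with nilpotent residues along $Z$ to a transverse smooth divisor is again tame with nilpotent residues along the restricted divisor --- reduces us to the case where $C$ is itself a smooth divisor transverse to $Z$, so that $Z\cup C$ is a strict normal crossing divisor. In that case the assertion says that restricting the canonical ($j_{Z+}$-)extension of $M|_{X\setminus Z}$ across $Z$ to the transverse component $C$ gives the canonical extension, across $Z\cap C$, of $(M|_{X\setminus Z})|_{C\setminus Z}$, and this is where the hypothesis is used in an essential way: because $M|_{X\setminus Z}$ is a tame overconvergent isocrystal with nilpotent residues along $Z$, the object $j_{Z+}j_Z^+M$ is, near $Z$ in the chosen coordinates, the log-extension of $M|_{X\setminus Z}$ along $\{t_1=0\}$ with $t_1$ inverted; restricting that log-extension to the transverse divisor $C$ yields the log-extension of $(M|_{X\setminus Z})|_{C\setminus Z}$ --- its residue along $Z\cap C$ being merely the restriction of the residue along $Z$, hence still nilpotent, by transversality --- and inverting $t_1$ once more recovers $j'_{C+}(j'_C)^+i_C^+M$. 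I expect this to be the main obstacle. It genuinely requires both transversality --- the statement is false already when $C\subseteq Z$, and in general when $C$ is not transverse to $Z$ --- and the tameness-with-nilpotent-residues hypothesis, since the $\ms{D}$-module restriction $i_C^+$ to the non-open subscheme $C$ admits no free base-change isomorphism and one must feed in the explicit local structure of $j_{Z+}j_Z^+M$. The remaining ingredients --- the localisation triangles, proper base change, the dévissage to a strict normal crossing divisor, and compatibility of base-change morphisms with units --- are all formal, relying on the formalism of \cite{A}, the techniques of \cite{AC}, and the structure theory of log-extendable isocrystals of \cite{S0}.
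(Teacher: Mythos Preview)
Your reduction is sound and is essentially the dual of the paper's argument. The paper passes first to the dual statement $(!Z)(+C)(M)\xrightarrow{\sim}(+C)(!Z)(M)$ and works with $i_D^!$ for an intermediate transverse divisor $D=V(t_1)$, then inducts on $\dim X$; you work directly with $i_C^+$ and d\'evisse by peeling off the divisors $\{t_i=0\}$ one at a time. Both routes localize, choose coordinates, and reduce via localisation triangles to a base-change assertion along a single transverse divisor: your
\[
 i_C^+\,j_{Z+}\;\xrightarrow{\ ?\ }\;j'_{C+}\,(i_C|)^+
\]
is exactly dual to the paper's $\alpha\colon (!Z_D)\circ i_D^!\to i_D^!\circ(!Z)$. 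You have correctly located the only non-formal step.

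Where your proposal is incomplete is in discharging that step. The heuristic ``$j_{Z+}$ is the log-extension with $t_1$ inverted, and restricting a log-extension to a transverse divisor gives the log-extension of the restriction'' is the right intuition, but the identification of the $\ms{D}$-module pushforward $j_{Z+}$ with an $\mc{O}$-module-level construction on the log-extension is itself a theorem in this $p$-adic setting, not something one can simply assert. The paper does not argue this directly either: after one more localisation triangle (along $D$) it reduces $\alpha$ to the vanishing
\[
 \mb{R}\underline{\Gamma}^+_{Z_D}\circ i_D^!\circ\bigl(!(D\cup Z)\bigr)(M)=0,
\]
and then cites \cite[(3.4.12.1)]{AC} for precisely this. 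That result (or its dual) is the black box your sketch needs; once it is in hand, your argument and the paper's are interchangeable.
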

\begin{proof}
 It suffices to show the dual statement
 \begin{equation*}
  \rho_{(X,Z,C)}(M)\colon
   (!Z)(+C)(M)\xrightarrow{\sim}(+C)(!Z)(M).
 \end{equation*}
 We argue by  induction on the dimension of $X$. Assume
 that the statement is known for $\dim(X)\leq d$. We show the lemma for
 $\dim(X)=d+1$. Since the claim is local, we may assume that there
 exists a system of local coordinates $\{t_0,t_1,\dots,t_d\}$ such that
 $Z=(t_0)$, $C=V(t_1,\dots,t_d)$. We put $D:=V(t_1)$.
 To simplify the notation, we denote the boundaries by $Z_C:=Z\cap C$,
 $Z_D:=Z\cap D$. Moreover, we introduce notations of morphisms as
 follows:
 \begin{equation*}
  \xymatrix@C=40pt{
   C\ar[r]^-{i_{C,D}}\ar@/_10pt/[rd]_-{i_C}&
   D\ar[d]^{i_D}\\
  &X.
  }
 \end{equation*}
 First, let us show that
 $$\alpha\colon(!Z_D)\circ i^!_D(M)\xrightarrow{}
 i^!_D\circ (!Z)(M)$$
 is an isomorphism.
 The exact triangle
 $(!D)\rightarrow\mr{id}\rightarrow
 \mb{R}\underline{\Gamma}^+_D\rightarrow$
 induces the following commutative diagram of exact triangles in
 $D^{\mr{b}}_{\mr{hol}}(D)$:
 \begin{equation*}
  \xymatrix{
   (!Z_D)\circ i^!_D\circ(!D)(M)
   \ar[r]\ar[d]_{\alpha_2}&
   (!Z_D)\circ i^!_D(M)
   \ar[r]\ar[d]^{\alpha}&
   (!Z_D)\circ i^!_D\circ
   \mb{R}\underline{\Gamma}^+_D(M)
   \ar[r]^-{+}\ar[d]^{\alpha_1}&\\
  i_D^!\circ (!Z)\circ(!D)(M)\ar[r]&
   i^!_D\circ (!Z)(M)\ar[r]&
   i_D^!\circ (!Z)\circ\mb{R}\underline{\Gamma}^+_D(M)
   \ar[r]^-{+}&
   }
 \end{equation*}
 We claim that $\alpha_1$ is an isomorphism. Indeed, we have
 \begin{align*}
  (!Z_D)\circ i_D^!\circ\mb{R}\underline{\Gamma}^+_D&=
  (!Z_D)\circ i_D^!\circ i_{D+}\circ i_D^+
  \cong
  (!Z_D)\circ i_D^+,\\
  i_D^!\circ (!Z)\circ\mb{R}\underline{\Gamma}^+_D&=
  i_D^!\circ (!Z)\circ i_{D+}\circ i^+_D\cong
  i_D^!\circ i_{D+}\circ (!Z_D)\circ i^+_D\cong
  (!Z_D)\circ i^+_D.
 \end{align*}
 Here we use the isomorphism $i_D^! \circ i_{D+}\cong\mr{id}$ twice, and
 $i_{D!}\cong i_{D+}$ in the second line. Since $\alpha_1$ is the
 identity on $D\setminus Z_D$, the claim is proven.

\medskip

 Thus, it remains to show that $\alpha_2$ is an isomorphism. It is
 obvious that $\alpha_2|_{D\setminus Z_D}$ is an isomorphism. This
 implies that it is enough to check
 \begin{equation*}
  \mb{R}\underline{\Gamma}^+_{Z_D}
   \circ i^!_D \circ (!Z) \circ (!D)(M)
   \cong
   \mb{R}\underline{\Gamma}^+_{Z_D}
   \circ i^!_D \circ (!D\cup Z)(M)=0.
 \end{equation*}
 Since $M|_{X\setminus Z}$ is assumed to be tame with nilpotent
 residues, we use \cite[(3.4.12.1)]{AC} to conclude.

\medskip

 We now complete the proof.
 The exact triangle
 $\mb{R}\underline{\Gamma}^!_C\rightarrow\mr{id}\rightarrow(+C)
 \xrightarrow{+1}$ induces the following commutative diagram of exact
 triangles:
 \begin{equation*}
   \xymatrix{
   (!Z)\circ\mb{R}\underline{\Gamma}^!_C(M)
   \ar[r]\ar[d]_{\beta}&
   (!Z)(M)\ar[r]\ar@{=}[d]&
   (!Z)\circ(+C)(M)\ar[r]^-{+}\ar[d]^{\rho_{(X,Z,C)}(M)}&\\
  \mb{R}\underline{\Gamma}^!_C\circ(!Z)(M)\ar[r]&
   (!Z)(M)\ar[r]&
   (+C)\circ(!Z)(M)\ar[r]^-{+}&.
   }
 \end{equation*}
 This implies that  $\rho_{(X,Z,C)}$ is an isomorphism if and
 only if $\beta$ is an isomorphism.
 We have the following commutative diagram:
 \begin{equation*}
  \xymatrix@C=50pt{
   i_{C+}\circ(!Z_C)\circ i^!_C(M)
   \ar[r]^-{}\ar@{-}[d]_{\sim}&
   i_{C+}\circ i^!_C\circ (!Z)(M)
   \ar@{=}[d]\\
  (!Z)\circ\mb{R}\underline{\Gamma}^!_C(M)
   \ar[r]_-{\beta}&
   \mb{R}\underline{\Gamma}^!_C\circ(!Z)(M).
   }
 \end{equation*}
 Since $i_C$ is a closed immersion, $\beta$ is an isomorphism if and
 only if
 \begin{equation*}
  \rho'_{(X,Z,C)}(M)\colon (!Z_C)\circ i_C^!(M)\rightarrow
 i_C^!\circ(!Z)(M)
 \end{equation*}
 is an isomorphism. Namely,
 \begin{equation}
  \label{localsequ}
  \tag{$\star$}
  \mbox{$\rho_{(X,Z,C)}(M)$ is an isomorphism}
   \Leftrightarrow
  \mbox{$\rho'_{(X,Z,C)}(M)$ is an isomorphism}.
 \end{equation}
 The homomorphism $\rho'_{(X,Z,C)}(M)$ can be computed as follows:
 \begin{align*}
  (!Z_C)\circ i_C^!(M)
  &\cong
  (!Z_C)\circ i^!_{C,D}\circ i^!_{D}(M)
  \xrightarrow[\rho']{\sim}
  i^!_{C,D}\circ (!Z_D)\circ i^!_D(M)\\
  &\xrightarrow[\alpha]{\sim}
  i^!_{C,D}\circ i^!_D\circ (!Z)(M)
  \cong
  i_C^!\circ (!Z)(M),
 \end{align*}
 where $\rho':=\rho'_{(D,Z_D, C)}\bigl(i_D^!(M)\bigr)$, which is an
 isomorphism by the induction hypothesis  (\ref{localsequ}) applied to
 $(D,Z_D,C)$.
\end{proof}

\begin{lem}
 \label{vanishingh1}
 Let $X$ be a projective variety of dimension $d\geq 2$,  let $C$ be a
 curve which is a complete intersection of ample divisors.
 Then, for any $M\in D^{\mr{b}}_{\mr{hol}}(X)$ such that $\H^i(M)=0$
 (cf.\ \ref{arithdmodrev} for $\H^*$) for $i<d$, one has
 $H^n\bigl(X,(!C)(M)\bigr)=0$ for $n=0,1$. 
\end{lem}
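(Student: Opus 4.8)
The plan is to push everything to the open complement $U:=X\setminus C$ and to combine the hypothesis ``$M$ has no holonomic cohomology in degrees $<d$'' with Artin–Grothendieck affine vanishing. First I would rewrite the group in question. If $j\colon U\hookrightarrow X$ denotes the open immersion, then $(!C)(M)=j_!\,j^+M$ by definition, and since $X$ is proper over $k$ one has $\epsilon_{X+}=\epsilon_{X!}$, whence $\epsilon_{X+}\circ j_!=\epsilon_{U!}$ and
\begin{equation*}
  H^n\bigl(X,(!C)(M)\bigr)\;\cong\;H^n_{\mr{c}}\bigl(U,M|_U\bigr).
\end{equation*}
Write $C=H_1\cap\dots\cap H_{d-1}$ with the $H_i$ ample effective Cartier divisors; then $U$ is covered by the $d-1$ affine opens $V_i:=X\setminus H_i$, and for every nonempty $I\subseteq\{1,\dots,d-1\}$ the intersection $V_I:=\bigcap_{i\in I}V_i=X\setminus\bigl(\bigcup_{i\in I}H_i\bigr)$ is again affine, being the complement of the support of the ample divisor $\sum_{i\in I}H_i$. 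Since $j^+$ is exact for the holonomic $t$-structure, each restriction $M|_{V_I}$ still satisfies $\H^i(M|_{V_I})=0$ for $i<d$.

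The one external ingredient I would invoke is the affine Lefschetz theorem in the arithmetic $\ms{D}$-module formalism (see \cite{AC}): for an affine $k$-scheme $V$ the functor $\epsilon_{V!}$ is right $t$-exact, i.e. $H^q_{\mr{c}}(V,\mc P)=0$ for $q<0$ whenever $\mc P$ lies in the heart $\mr{Hol}(V)$. Feeding this into the hypercohomology spectral sequence $E_2^{a,b}=H^a_{\mr{c}}(V,\H^b(\mc N))\Rightarrow H^{a+b}_{\mr{c}}(V,\mc N)$ yields: if $V$ is affine and $\mc N\in D^{\mr{b}}_{\mr{hol}}(V)$ has $\H^i(\mc N)=0$ for $i<d$, then $H^n_{\mr{c}}(V,\mc N)=0$ for $n<d$. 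This already settles $d=2$: there $C=H_1$, so $U=V_1$ is affine and $H^n(X,(!C)(M))\cong H^n_{\mr{c}}(U,M|_U)=0$ for $n<2$.

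For $d\ge 3$ I would induct on the number $r$ of opens in such a cover and prove the auxiliary statement: if $W=\bigcup_{i=1}^r W_i$ is an open cover of a $k$-scheme such that $W_I:=\bigcap_{i\in I}W_i$ is affine for every nonempty $I\subseteq\{1,\dots,r\}$, and $\mc N\in D^{\mr{b}}_{\mr{hol}}(W)$ has $\H^i(\mc N)=0$ for $i<d$, then $H^n_{\mr{c}}(W,\mc N)=0$ for $n<d-r+1$. The case $r=1$ is the previous paragraph. For the inductive step put $W'=\bigcup_{i<r}W_i$; then $W'$, $W_r$, and $W'\cap W_r=\bigcup_{i<r}(W_i\cap W_r)$ all satisfy the hypotheses of the auxiliary statement with at most $r-1$ opens, so I would apply the Mayer–Vietoris exact triangle
\begin{equation*}
  j_{W'\cap W_r,\,!}(\mc N|)\longrightarrow j_{W',\,!}(\mc N|)\ \oplus\ j_{W_r,\,!}(\mc N|)\longrightarrow \mc N\xrightarrow{\ +1\ }
\end{equation*}
attached to the open cover $W=W'\cup W_r$ (the maps being the open immersions into $W$), push it forward by $\epsilon_{W!}$, and read off from the long exact sequence that $H^n_{\mr{c}}(W,\mc N)$ is squeezed between $H^n_{\mr{c}}(W',\mc N|)\oplus H^n_{\mr{c}}(W_r,\mc N|)$ and $H^{n+1}_{\mr{c}}(W'\cap W_r,\mc N|)$; by the case $r=1$ and the inductive hypothesis all of these vanish once $n<d-r+1$. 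Applying the auxiliary statement to $W=U$ with $r=d-1$ gives $H^n_{\mr{c}}(U,M|_U)=0$ for $n<d-(d-1)+1=2$, that is, for $n=0,1$, which is the assertion.

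The argument uses nothing about $C$ beyond its being an intersection of $d-1$ divisors with affine complements, and it is indifferent to whether $X$ is smooth. I expect the only delicate point to be bookkeeping with the holonomic $t$-structure and, in particular, invoking the affine vanishing $H^q_{\mr{c}}(V,\mr{Hol}(V))=0$ for $q<0$ in the correct normalization (in which an isocrystal on a $d$-dimensional scheme sits in degree $d$); granting that, the cover combinatorics and the Mayer–Vietoris induction are routine.
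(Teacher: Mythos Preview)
Your argument is correct. The one slip is terminological: what you invoke (and what \cite[Prop.~1.3.13]{AC} gives, dually) is that $\epsilon_{V!}$ is \emph{left} $t$-exact for affine $V$, i.e.\ it preserves $D^{\ge 0}$, which is exactly your statement $H^q_{\mr c}(V,\mc P)=0$ for $q<0$; ``right $t$-exact'' would give vanishing for $q>0$ instead. With that corrected, the spectral-sequence step and the Mayer--Vietoris induction on the number of affine opens go through, and the final bound $n<d-(d-1)+1=2$ is precisely what is needed.

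The paper's proof rests on the same affine vanishing but organises the induction differently: it inducts on $d=\dim X$ rather than on the size of an open cover. One chooses a single ample $H\supset C$ and uses the localization triangle
\[
(!C)(!H)(M)\longrightarrow (!C)(M)\longrightarrow (!C)\,\mb{R}\underline{\Gamma}^+_H(M)\xrightarrow{+1};
\]
the first term is $(!H)(M)$ (since $C\subset H$) and has vanishing $H^0,H^1$ by affine vanishing on $X\setminus H$, while the third term is $i_{H+}$ applied to $(!C)(i_H^+M)$ on the $(d{-}1)$-dimensional $H$, handled by the inductive hypothesis. So both proofs peel off one hypersurface at a time; yours stays on the open side and uses Mayer--Vietoris, while the paper passes to the closed hyperplane. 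Your packaging makes explicit that only the affineness of each $X\setminus H_i$ matters (and that smoothness of $X$ is irrelevant), at the cost of setting up the Mayer--Vietoris triangle for $j_!$ in this formalism; the paper's version avoids that but needs to track the $t$-amplitude of $i_H^+M$ through the induction.
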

\begin{proof}
 We use induction on the dimension of $X$. When $d=2$, the structural
 morphism $\epsilon\colon X\setminus C\rightarrow\mr{Spec}(k)$ is
 affine. By \cite[Prop.~1.3.13]{AC}, this implies that $\epsilon_+$ is
 left t-exact (with respect to the t-structure introduced in
 \ref{arithdmodrev}). By the vanishing condition on the cohomologies of
 $M$, the lemma follows in this case.
 
 \medskip
 
 Assume $d\geq 3$. Let $H\subset X$ be an ample divisor containing
 $C$. The localization triangle for $H$ induces the exact triangle
 exact triangle
 \begin{equation*}
  (!C)\circ(!H)(M)\rightarrow (!C)(M)\rightarrow
   (!C)\circ\mb{R}\underline{\Gamma}^+_H(M)\xrightarrow{+1}
 \end{equation*}
 Using \cite[Prop.~1.3.13]{AC} again and by the assumption on $M$,
 $\H^i((!H)M)=\H^i(M)=0$ for $i<d$. Using the localization sequence,
 $\H^i\mb{R}\underline{\Gamma}^+_H(M)=0$ for $i<d-1$, thus $\H^i(i^+_H
 M)=0$ for $i<d-1$. This implies that
 \begin{equation*}
  H^i\bigl(X,(!C)\circ\mb{R}\underline{\Gamma}^+_H(M)\bigr)\cong
  H^i\bigl(H,(!C)(i^+_H M)\bigr)=0
 \end{equation*}
 for $i=0,1$ by induction hypothesis.
 Moreover, we have
 \begin{equation*}
  H^i\bigl(X,(!C)\circ(!H)(M)\bigr)\xrightarrow{\sim}
     H^i\bigl(X,(!H)(M)\bigr)=0
 \end{equation*}
 for $i=0,1$, where the first isomorphism holds since $C\subset H$, and
 the second since $X\setminus H$ is affine (cf.\ {\it ibid.}).
 This finishes the proof.
 \end{proof}

\begin{cor}
 \label{finalcor}
 Let $X$ be a smooth projective variety, $Z$ be a simple normal
 crossing divisor. Let $C$ be a smooth curve which is a complete
 intersection of ample divisors in good position with respect to $Z$,
 and $i_C: C\setminus Z_C\rightarrow X\setminus Z$ (where $Z_C:=C\cap Z$
 as before) be the closed embedding.
 Then, for any tame isocrystal $M$ with nilpotent residues of
 $\mr{Isoc}^\dag(X \setminus Z)$,
 the homomorphism $H^0(X\setminus Z,M)\rightarrow H^0(C\setminus
 Z_C,i_C^+M)$ is an isomorphism.
\end{cor}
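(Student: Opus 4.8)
The plan is to deduce Corollary~\ref{finalcor} from Proposition~\ref{commutation} and Lemma~\ref{vanishingh1} by a dévissage along the divisor $Z$. First I would reduce to computing $H^0$ as a cohomology group on the projective variety $X$. Write $j\colon X\setminus Z\hookrightarrow X$ for the open immersion; then $H^0(X\setminus Z,M)=H^0(X,(+Z)(M))$ and $H^0(C\setminus Z_C,i_C^+M)=H^0(X,(!C)(+Z)(M))$, the latter because $(!C)(+Z)(M)$ is supported on $C$ and its restriction there recovers $i_C^+M$ (pushed forward). So the map in question is the natural map $H^0(X,(+Z)(M))\to H^0(X,(!C)(+Z)(M))$ induced by the adjunction triangle $\mb{R}\underline{\Gamma}^!_C\to\mr{id}\to(+C)\xrightarrow{+1}$ applied to $(+Z)(M)$, or rather its $(!C)$-counterpart; I will fit the comparison into the localization triangle $(!C)(+Z)(M)\to(+Z)(M)\to(+C)(+Z)(M)\xrightarrow{+1}$ on $X$.

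Taking the long exact sequence of $H^*(X,-)$ for this triangle, the map $H^0(X,(+Z)(M))\to H^0(X,(!C)(+Z)(M))$ being an isomorphism will follow once I know that $H^{-1}(X,(+C)(+Z)(M))=0$ and $H^0(X,(+C)(+Z)(M))\to H^0(X,(+Z)(M))$ is... wait, the direction is the other way; more cleanly I would instead use the dual triangle $(!C)(+Z)(M)\to(+Z)(M)\to(+C)(+Z)(M)$ and observe that $H^n(X,(+C)(+Z)(M))=H^n(C,\text{(something tame on }C\setminus Z_C))$ — but that is not obviously zero. The right move, following the internal logic of the section, is to use Proposition~\ref{commutation}: it gives $(!C)(+Z)(M)\cong(+Z)(!C)(M)$. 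Hence $H^0(X,(!C)(+Z)(M))\cong H^0(X,(+Z)(!C)(M))$, and now I would run the localization triangle for $Z$ on $(!C)(M)$: $(!Z)(!C)(M)\to(!C)(M)\to(+Z)(!C)(M)\xrightarrow{+1}$, reducing the comparison of $H^0(X,(+Z)(M))$ with $H^0(X,(+Z)(!C)(M))$ to controlling $H^*(X,(!Z)(!C)(M))$ and $H^*(X,(!C)(M))$ against the analogous terms without $(!C)$.

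The key vanishing input is Lemma~\ref{vanishingh1}: since $M$ is an isocrystal on the smooth $X\setminus Z$ of dimension $d$, after the normalization putting it in $\mr{Hol}[-d]$ it satisfies $\H^i=0$ for $i<d$; the functors $(!Z)$ and $j_+$ preserve (or shift in a controlled way) this kind of concentration, so the objects $(+Z)(M)$ and $(!Z)(!C)(M)$ etc. have cohomology concentrated in the correct range, and Lemma~\ref{vanishingh1} applied on $X$ (and on the relevant ample-divisor strata) gives $H^n(X,(!C)(\text{--}))=0$ for $n=0,1$ for each such object. Feeding these vanishings into the two localization long exact sequences above collapses everything and forces $H^0(X\setminus Z,M)\xrightarrow{\sim}H^0(C\setminus Z_C,i_C^+M)$.

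The main obstacle I anticipate is bookkeeping the cohomological amplitudes through the functors $(+Z)$, $(!Z)$, $(!C)$ so that the hypothesis "$\H^i(M)=0$ for $i<d$" of Lemma~\ref{vanishingh1} is genuinely available for the auxiliary objects that appear (in particular for $(!Z)(!C)(M)$ and for the restrictions to the ample divisors $H$ used in the induction of that lemma), and making sure the "good position" hypothesis on $C$ relative to the SNC divisor $Z$ really lets me invoke Proposition~\ref{commutation} — which is stated for a single smooth divisor transverse to $C$ — by stratifying $Z$ into its smooth components and iterating. Handling the nilpotent-residue tameness condition so that it is preserved at each inductive step (it is, since it is checked on curves and is stable under the operations used) is the other point requiring care, but it is exactly what Proposition~\ref{commutation} was set up to accommodate.
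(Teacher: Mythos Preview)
Your high-level plan --- localization triangle, then the commutation of Proposition~\ref{commutation}, then the vanishing of Lemma~\ref{vanishingh1} --- is exactly the paper's. The execution, however, contains a genuine confusion: in the conventions of \S\ref{ss:functors} one has $(!C)=j_!j^+$ for $j\colon X\setminus C\hookrightarrow X$, so $(!C)(N)$ is supported on the \emph{complement} of $C$, not on $C$. Hence your identification $H^0(C\setminus Z_C,i_C^+M)=H^0(X,(!C)(+Z)(M))$ is false, and the triangle you write as $(!C)\to\mr{id}\to(+C)$ should read $(!C)\to\mr{id}\to\mb{R}\underline{\Gamma}^+_C$. Once this is fixed the argument is shorter than your sketch and no second localization along $Z$ is needed. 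The correct triangle on $X\setminus Z$ gives
\[
 H^0\bigl(X\setminus Z,\,!(C\setminus Z_C)(M)\bigr)\rightarrow H^0(X\setminus Z,M)\rightarrow H^0(C\setminus Z_C,i_C^+M)\rightarrow H^1\bigl(X\setminus Z,\,!(C\setminus Z_C)(M)\bigr),
\]
so it suffices to kill the two outer terms. A formal base-change identifies them with $H^i\bigl(X,(+Z)(!C)(j_+M)\bigr)$; Proposition~\ref{commutation} swaps this to $H^i\bigl(X,(!C)(+Z)(j_+M)\bigr)=H^i\bigl(X,(!C)(j_+M)\bigr)$; and since $j\colon X\setminus Z\hookrightarrow X$ is an \emph{affine} immersion, one has $\H^i(j_+M)=0$ for $i\neq d$, so Lemma~\ref{vanishingh1} applies directly. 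The amplitude bookkeeping you worry about collapses to that single affineness observation.

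Your remark that Proposition~\ref{commutation} is literally stated for a single smooth divisor, whereas $Z$ here is simple normal crossings, is well taken; the paper invokes it without comment, and your proposed fix --- iterate over the smooth components of $Z$, noting that tameness with nilpotent residues survives each step --- is the intended justification.
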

\begin{proof}
 Consider the long exact sequence
 \begin{equation*}
  H^0\bigl(X\setminus Z, !(C\setminus Z_C)(M)\bigr)
   \rightarrow
   H^0(X\setminus Z,M)\xrightarrow{i^*}
   H^0(C\setminus Z_C,i^+_CM)\rightarrow
   H^1\bigl(X\setminus Z,!(C\setminus Z_C)(M)\bigr).
 \end{equation*}
 In order to prove that $i^*$ is an isomorphism, it is sufficient to
 prove that the left and the right terms are both zero.
 Now, let $j\colon X\setminus Z\hookrightarrow X$ be the open
 immersion. We have
 \begin{equation*}
  H^i\bigl(X\setminus Z,!(C\setminus Z_C)(M)\bigr)\cong
   H^i\bigl(X,(+Z)\circ(!C)(j_+M)\bigr)\cong
   H^i\bigl(X,(!C)\circ (+Z)(j_+M)\bigr),
 \end{equation*}
 where the last isomorphism holds by Proposition \ref{commutation}.
 To finish the proof, we note that $j$ is an affine immersion, which
 implies that $\H^i(j_+M)=0$ for $i\neq d$ by \cite[Prop.~1.3.13]{AC}.
 This enables us to apply Lemma \ref{vanishingh1}.
\end{proof}

\subsection{}
\label{lefsthmtame}
We now consider overconvergent $F$-isocrystals, thus reintroduce
subscripts.

\begin{thm*}
 Let $X_0$ be a smooth projective variety over $k$, $Z_0$ be a
 simple normal crossings divisor, and $x_0$ be a closed point of
 $X_0\setminus Z_0$. Let $C_0$ be  a smooth curve in $X_0$
 passing through $x_0$, which is a complete intersection of ample
 divisors in good position with respect to $Z_0$,
 and let $i_C: C_0\setminus Z_{C,0}\rightarrow X_0\setminus Z_0$
 (where $Z_{C,0}:=C_0\cap Z_0$) be the closed immersion.
 Then, for any irreducible $M_0\in\mr{Isoc}^\dag(X_0\setminus Z_0)$, which is  tame
 with nilpotent residues along the boundary, $i^+_CM_0$ is irreducible.
\end{thm*}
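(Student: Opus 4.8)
The plan is to derive irreducibility from the Tannakian equivalence of Corollary~\ref{Lefarith}, feeding into it the cohomological input of Corollary~\ref{finalcor}. Write $X_0' := X_0 \setminus Z_0$ and $Y_0 := C_0 \setminus Z_{C,0}$, so that $i_C \colon Y_0 \to X_0'$ is a closed immersion between smooth $k$-schemes, $X_0'$ is geometrically connected (as $X_0$ is a variety), and $Y_0$ is a smooth curve. One may assume $d := \dim X_0' \geq 2$, the case $d = 1$ being trivial since then $C_0 = X_0$ and $i_C^+M_0 = M_0$. The aim is to verify that for every $N \in \langle M\rangle$ the restriction map $H^0(X_0'/k, N) \to H^0(Y_0/k, i_C^+N)$ is an isomorphism; granting this, Corollary~\ref{Lefarith} will provide an equivalence of Tannakian categories $i_C^+ \colon \langle M_0\rangle \xrightarrow{\sim} \langle i_C^+M_0\rangle$ carrying $M_0$ to $i_C^+M_0$.

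To obtain this isomorphism one invokes Corollary~\ref{finalcor}, whose hypotheses require $N$ to be a tame isocrystal with nilpotent residues along $Z$; so the one point to check is that every object $N \in \langle M\rangle$ has this property. The geometric pull-back $M \in \mr{Isoc}^\dag(X_0' \otimes_k \overline{k})$ of $M_0$ is still tame with nilpotent residues, these being notions insensitive to the extension $k \subset \overline{k}$. Now $\langle M\rangle$ consists of subquotients of the isocrystals $M^{\otimes a} \otimes M^{\vee \otimes b}$ and of their finite direct sums, and all of the operations $\otimes$, $(-)^\vee$, $\oplus$ and passage to subquotients in the abelian category of isocrystals preserve log-extendability along $Z$ together with nilpotence of the residues: along a component of $Z$ the residue of a tensor product is a sum of pairwise commuting nilpotent endomorphisms, that of a dual is minus the transpose of a nilpotent one, and that of a subquotient is the endomorphism induced on a subquotient of a nilpotent one, so in each case the resulting operator is nilpotent. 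Hence $N$ is tame with nilpotent residues, and Corollary~\ref{finalcor} --- applied with the given curve $C$, which is a complete intersection of ample divisors in good position with respect to $Z$ --- yields $H^0(X_0'/k, N) \xrightarrow{\sim} H^0(Y_0/k, i_C^+N)$.

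By Corollary~\ref{Lefarith} one then has the equivalence $i_C^+ \colon \langle M_0\rangle \xrightarrow{\sim} \langle i_C^+M_0\rangle$ with $i_C^+(M_0) = i_C^+M_0$. Since $\langle M_0\rangle$ is a full subcategory of $\mr{Isoc}^\dag(X_0')$ closed under subobjects, the irreducible object $M_0$ is simple in $\langle M_0\rangle$; an equivalence of abelian categories preserves simplicity, so $i_C^+M_0$ is simple in $\langle i_C^+M_0\rangle$; and since $\langle i_C^+M_0\rangle$ is in turn closed under subobjects inside $\mr{Isoc}^\dag(Y_0)$, every nonzero sub-$F$-isocrystal of $i_C^+M_0$ over $Y_0$ already lies in $\langle i_C^+M_0\rangle$, whence $i_C^+M_0$ is irreducible in $\mr{Isoc}^\dag(C_0 \setminus Z_{C,0})$. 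The step I expect to require the most care is the stability of tameness and of nilpotence of residues throughout $\langle M\rangle$, in particular under passage to subquotients; the remainder of the argument is formal once the cohomological Lefschetz theorem (Corollary~\ref{finalcor}) and the Tannakian dictionary (Lemma~\ref{keytannak} and Corollary~\ref{Lefarith}) are in hand, and the genuinely substantial work has already gone into establishing those.
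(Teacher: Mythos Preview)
Your approach is essentially identical to the paper's: feed the cohomological isomorphism of Corollary~\ref{finalcor} into Corollary~\ref{Lefarith}, after checking that tameness with nilpotent residues propagates through $\langle M\rangle$. The only step the paper makes explicit that you gloss over is the reduction to the case where $X_0$ is geometrically connected (the paper's use of ``variety'' does not guarantee this, so one argues componentwise and then replaces $k$ by the field of constants); otherwise your write-up matches the paper's strategy and in fact supplies more detail than the paper does.
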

\begin{proof}
 We may assume $X_0$ is connected by arguing componentwise.
 We may further assume that $X_0$ is geometrically connected.
 Then, the theorem follows from Corollary~\ref{finalcor}, combined with
 Corollary~\ref{Lefarith}.
\end{proof}

\begin{rem*}
 The existence of the curve $C_0$  follows from
 \cite[Thm.~1.3]{Poo}.
\end{rem*}

\begin{thm} \label{thm:pure}
 Let $X_0$ be a scheme of finite type over $k$, then any object in
 $D^{\mr{b}}_{\mr{hol}}(X_0)$ is $\iota$-mixed.
\end{thm}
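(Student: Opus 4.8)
The plan is to reduce the $\iota$-mixedness of an arbitrary holonomic complex on $X_0$ to the case of irreducible overconvergent $F$-isocrystals on smooth varieties, and then invoke the Lefschetz-type machinery developed in the preceding sections together with Lafforgue/Crew-style purity on curves. First I would observe that $\iota$-mixedness is stable under the six operations (in particular under $\mathcal{H}^*$, extensions, subquotients, and $f_+$, $f^+$ for $f$ of finite type), so by dévissage along the perverse/holonomic $t$-structure and induction on $\dim X_0$ it suffices to treat a single irreducible object $M_0 \in \mathrm{Hol}(X_0)$. By the classification of irreducible holonomic modules, such an $M_0$ is (up to shift and twist) the intermediate extension $j_{!+}$ of an irreducible overconvergent $F$-isocrystal $N_0$ on a smooth dense open $j\colon U_0 \hookrightarrow Z_0$ of an integral closed subscheme $i\colon Z_0 \hookrightarrow X_0$; since $i_+$ and $j_{!+}$ preserve $\iota$-mixedness, we are reduced to showing that an irreducible overconvergent $F$-isocrystal $N_0$ on a smooth geometrically connected $U_0$ is $\iota$-mixed, in fact $\iota$-pure after we twist the determinant to be finite.

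**Next I would** handle the determinant. Twisting $N_0$ by a rank-one object in $\mathrm{Isoc}^\dag(\mathrm{Spec}\, k)$ changes all eigenvalues by a fixed scalar and hence does not affect $\iota$-mixedness; using \cite[Lem.~6.1]{AbeLL} (class field theory, as in the proof of Proposition~\ref{prop:H0}) one arranges that $N_0$ has finite determinant. Now Theorem~\ref{thm:lef_p} applies: there is a dense open $U_0' \subseteq U_0$ so that through every closed point of $U_0'$ there is a smooth curve $C_0 \to U_0$ with $N_0|_{C_0}$ irreducible. The key point is that $\iota$-purity can be checked pointwise on eigenpolynomials, and every closed point of $U_0'$ lies on such a curve; so it is enough to know that an irreducible overconvergent $F$-isocrystal with finite determinant \emph{on a smooth curve} is $\iota$-pure of weight $0$. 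That is precisely the curve case of the conjecture, due to the first author (\cite[Intro.\ Thm.]{A}), which gives $\iota$-mixedness (indeed purity) on curves. Comparing eigenpolynomials at the closed points of $U_0'$ then shows $N_0|_{U_0'}$, hence $N_0$ on a dense open, is $\iota$-pure; shrinking and re-running the dévissage (the complement has strictly smaller dimension) finishes the reduction.

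**The main obstacle** is organizing the dévissage so that the induction actually closes: one must be careful that after restricting $N_0$ to a dense open to apply Theorem~\ref{thm:lef_p}, the closed complement is again a scheme of finite type of smaller dimension on which the inductive hypothesis applies, and that the gluing triangle $i_+ i^+ \to \mathrm{id} \to j_+ j^+$ (or its $!$-variant) transports $\iota$-mixedness back to $X_0$ — this uses that $\iota$-mixedness is closed under extensions and cohomology, which is built into the definition but needs to be spelled out. A secondary subtlety is that Theorem~\ref{thm:lef_p} requires finite determinant, so the determinant-twist argument must be inserted before applying it and then undone; and one should check that "every closed point lies on a good curve" is genuinely enough to conclude $\iota$-purity globally on $U_0'$, for which one notes that two overconvergent $F$-isocrystals with the same eigenpolynomials at all closed points have the same weights. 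I do not expect the curve input itself to be an obstacle, since it is quoted as a black box from \cite{A}; the work is entirely in the reduction.
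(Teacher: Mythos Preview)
Your reduction by d\'evissage to an irreducible overconvergent $F$-isocrystal with finite determinant on a smooth base is fine, and the idea of restricting to a good curve and quoting purity there is exactly the right endgame. The problem is \emph{which} Lefschetz theorem you invoke. You appeal to Theorem~\ref{thm:lef_p} (equivalently Theorem~\ref{thm:lefschetz}), the general wild Lefschetz theorem, but in the paper's logical order that theorem is proven \emph{after} Theorem~\ref{thm:pure} and its proof explicitly uses Theorem~\ref{thm:pure}: the very first step of the proof of Theorem~\ref{thm:lefschetz} is ``By Theorem~\ref{thm:pure}, $M_0$ is $\iota$-pure'', and the whole machinery of \S3 (Lemma~\ref{coinH0}, Lemma~\ref{companC}, Corollary~\ref{constC}, Theorem~\ref{existcov}) is built on the $\iota$-purity hypothesis. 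So as written your argument is circular.

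The paper breaks the circle as follows. At this point in \S2 only the \emph{tame} Lefschetz theorem (Theorem~\ref{lefsthmtame}) is available, and its proof is purely cohomological (Proposition~\ref{commutation}, Lemma~\ref{vanishingh1}, Corollary~\ref{finalcor}) and needs no purity input. The paper first treats the case where $X_0$ has a good compactification and $M_0$ is tame with nilpotent residues: then Theorem~\ref{lefsthmtame} supplies the curve and one concludes $\iota$-purity from the curve case \cite[Thm.~4.2.2]{A}. For general $M_0$ one does \emph{not} try to find a good curve directly; instead one takes a semistable alteration $h\colon X'_0\to X_0$ (Kedlaya), applies the tame case to $h^+M_0$, and then descends $\iota$-mixedness via the trace splitting $M_0\hookrightarrow h_+h^+M_0$ together with \cite[Thm.~4.2.3]{AC}. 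If you replace your appeal to Theorem~\ref{thm:lef_p} by this two-step ``tame Lefschetz $+$ semistable reduction $+$ trace'' argument, your outline becomes correct and matches the paper.
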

\begin{proof}
 By  a  d\'{e}vissage  argument, it suffices to show the theorem when
 $X_0$ is smooth and for objects $M_0$ in $\mr{Isoc}^\dag(X_0)$.

 We first assume that $X_0$ admits a smooth compactification with
 a simple normal crossings boundary divisor, and that  $M_0$ is tame.
 To check that $M_0$ is $\iota$-mixed, it suffices to check that
 any constituent is $\iota$-pure, thus we may assume that $M_0$ is
 irreducible. Twisting by a character, we may further assume that the
 determinant of $M_0$ is of finite order (\cite[Thm.~6.1]{AbeLL}).
 Let  $x_0\in X_0$ be a closed point, 
 $i\colon C_0\hookrightarrow X_0$ be a smooth curve passing through
 $x_0$ as in Remark~\ref{lefsthmtame}. Since $i^+M_0$ is irreducible by
 Theorem~\ref{lefsthmtame}, it is $\iota$-pure of weight $0$ by
 \cite[Thm.~4.2.2]{A}. Thus, $M_0$ is $\iota$-pure of weight $0$.

 We now treat the general case. Let $h\colon X'_0\rightarrow X_0$  be  a
 semistable reduction with respect to $M_0$
 (\cite[Thm.~2.4.4]{SS4}). Then $h^+M_0$ is $\iota$-mixed. As
 $M_0$ is a direct factor of $h_+h^+M_0$ by the trace formalism
 \cite[Thm..~1.5.1]{A}, we conclude that $M_0$ is $\iota$-mixed by
 \cite[Thm.~4.2.3]{AC}.
\end{proof}

\begin{prop}
 \label{comptame}
 Let $X_0$ be a smooth  projective variety over $k$, $Z_0$ be a simple
 normal crossing divisor, and $M_0$ be an isocrystal 
 in ${\rm Isoc}^\dag(X_0\setminus Z_0)$,
 which is tame along $Z_0$ with nilpotent residues.
 Then  $\ell$-adic companions of $M_0$ exist, and they are tame along
 $Z_0$.
\end{prop}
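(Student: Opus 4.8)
The plan is to combine the cohomological Lefschetz statement of this section with Abe's companion theorem on smooth curves and the theorems of Deligne recalled in~\ref{Delthmrec} and of Drinfeld (Theorem~\ref{thm:drin_tame}). Fix an isomorphism $\sigma\colon\overline{\mb{Q}}_p\to\overline{\mb{Q}}_\ell$ with $\ell\neq p$ and write $Y_0:=X_0\setminus Z_0$; I will construct a $\sigma$-companion, the argument being uniform in $\sigma$. First I reduce to $M_0$ irreducible: every constituent of a tame isocrystal with nilpotent residues along $Z_0$ is again of this kind (residues pass to subquotients), $f_x(-,t)$ is multiplicative in short exact sequences, and direct sums of tame sheaves are tame, so it is enough to produce tame companions of the irreducible constituents of $M_0$. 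Assuming $M_0$ irreducible, choose as in~\cite[Thm.~6.1]{AbeLL} a rank one $L_0\in\mr{Isoc}^\dag(\mr{Spec}(k))$ making $M_0\otimes L_0$ of finite determinant; its pull-back to $Y_0$ is unramified, in particular tame along $Z_0$, and $L_0$ has an obvious $\ell$-adic companion, so after tensoring the eventual answer with the $\ell$-adic companion of $L_0^\vee$ we may assume $M_0$ is \emph{irreducible with finite determinant}.

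Next I establish that the $f_x(M_0,t)$ are algebraic. Given a closed point $x_0\in Y_0$, Poonen's Bertini theorem~\cite[Thm.~1.3]{Poo} provides a smooth curve $C_0\subset X_0$ through $x_0$ which is a complete intersection of ample divisors in good position with respect to $Z_0$. By Theorem~\ref{lefsthmtame} the restriction $i_C^+M_0$ is irreducible with finite determinant, hence algebraic by Abe's theorem on smooth curves~\cite[Intro.\ Thm.]{A}, so $f_{x_0}(M_0,t)=f_{x_0}(i_C^+M_0,t)\in\overline{\mb{Q}}[t]$. Putting $t_n(x):=\sigma f_x(M_0,t)$ for $x\in Y_0(k_n)$, the algebraicity assertion of~\ref{Delthmrec}(i) propagates this to $t_n(x)\in\overline{\mb{Q}}[t]$ for all $n$ and all $x\in Y_0(k_n)$.

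For the remaining hypotheses, let $\varphi\colon C_0\to Y_0$ be any smooth curve. Since $M_0$, being tame along $Z_0$, is tame in the sense of~\ref{dfntame}, the isocrystal $\varphi^+M_0$ is tame, so by Abe's companion theorem on curves it has a $\sigma$-companion ${}_\ell M[\varphi]$, a lisse Weil $\overline{\mb{Q}}_\ell$-sheaf with $f_x({}_\ell M[\varphi],t)=t_n(\varphi(x))$, again tame because the companion correspondence on curves preserves ramification. Thus hypothesis (*) of~\ref{Delthmrec} holds, and its tameness hypothesis (ii) holds with the trivial \'etale cover $X'_0=Y_0$, so~\ref{Delthmrec}(ii) yields a number field $E\subset\overline{\mb{Q}}_\ell$ with $t_n$ valued in $E[t]$. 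Together with the weight-$0$ purity of Theorem~\ref{thm:pure} (which applies to $M_0$, hence to each $\varphi^+M_0$), the sheaves ${}_\ell M[\varphi]$ supply the curve-level input of Theorem~\ref{thm:drin_tame}(i) — lisse \'etale $\overline{\mb{Q}}_\ell$-sheaves with monodromy in $\mr{GL}(r,E_\lambda)$, $\lambda\mid\ell$ — while its hypothesis (ii) again holds with $X'_0=Y_0$. Drinfeld's theorem then produces a lisse $\overline{\mb{Q}}_\ell$-sheaf ${}_\ell M_0$ on $Y_0$ with $f_x({}_\ell M_0,t)=\sigma f_x(M_0,t)$ for all $x$, i.e.\ a $\sigma$-companion of $M_0$.

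Finally, ${}_\ell M_0$ is tame along $Z_0$: tameness of a lisse $\overline{\mb{Q}}_\ell$-sheaf along the simple normal crossings divisor $Z_0$ is detected after pull-back to smooth curves meeting $Z_0$ transversally, and on such a curve $\psi\colon C_0\to X_0$ the sheaf $\psi^+({}_\ell M_0)$ has the same Frobenius eigenpolynomials, hence the same semisimplification, as the $\sigma$-companion of $(\psi^+M_0)^{\mathrm{ss}}$, which is tame; as the Swan conductor is insensitive to semisimplification, $\psi^+({}_\ell M_0)$ is tamely ramified, so ${}_\ell M_0$ is tame along $Z_0$. The substantive new ingredient, Corollary~\ref{finalcor} and thereby Theorem~\ref{lefsthmtame}, is already in hand, so what remains is essentially organizational; the point that needs genuine care is the passage from Abe's curve-level Weil sheaves to the \'etale $E_\lambda$-coefficient sheaves demanded by Drinfeld's theorem, for which the weight-$0$ purity of Theorem~\ref{thm:pure} and the coefficient field of~\ref{Delthmrec}(ii) are exactly what is needed, together with the observation that $M_0$ being already tame makes the tameness hypotheses of both Deligne's and Drinfeld's statements vacuous, so that the trivial cover suffices.
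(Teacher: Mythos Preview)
Your overall strategy matches the paper's: reduce to $M_0$ irreducible with finite determinant, use the tame Lefschetz theorem and Abe's curve theorem to feed Deligne's theorem and then Drinfeld's. But there is a genuine gap in the step where you pass from the curve-level Weil sheaves ${}_\ell M[\varphi]$ to the \emph{\'etale} sheaves required by Theorem~\ref{thm:drin_tame}(i). You claim that weight-$0$ purity from Theorem~\ref{thm:pure} is ``exactly what is needed'' for this, but $\iota$-purity of weight $0$ does not force the Frobenius eigenvalues to be $\ell$-adic units: there are algebraic numbers all of whose archimedean absolute values equal $1$ which are not $\ell$-adic units (for instance $(2+i)/(2-i)$ at $\ell=5$), and a Weil sheaf whose Frobenius eigenvalues are such numbers is not \'etale. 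What is actually needed is that $M_0$ be $\sigma$-\emph{unit-root}. The paper obtains this by the very same Lefschetz argument you used for algebraicity: $i_C^+M_0$ is irreducible with finite determinant, hence $\sigma$-unit-root by \cite[Thm.~4.2.2]{A}, and Theorem~\ref{Delthmrec}(i) (whose statement covers the $\ell$-adic unit assertion just as well as algebraicity) then propagates this to every closed point of $Y_0$. Once $M_0$ is $\sigma$-unit-root, so is $\varphi^+M_0$ for every $\varphi$, and \cite[Thm.~4.2.2]{A} produces companions that are genuinely lisse \'etale $\overline{\mb{Q}}_\ell$-sheaves.

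A smaller omission: you assert that ${}_\ell M[\varphi]$ has monodromy in $\mr{GL}(r,E_\lambda)$, but knowing $f_x\in E[t]$ for all $x$ does not by itself yield a uniform coefficient field for the representations. The paper invokes \cite[Lem.~2.7 and \S2.3]{Dr} to produce a single finite extension $F$ of $\sigma(E)_\lambda$ containing the monodromy of ${}_\ell(\varphi^+M_0)$ for \emph{all} curves $\varphi$ simultaneously. Once you add the $\sigma$-unit-root step and this citation, your argument coincides with the paper's.
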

\begin{proof}
 Let us fix $\sigma\colon\overline{\mb{Q}}_p\rightarrow
 \overline{\mb{Q}}_{\ell}$.
 Arguing componentwise, we may assume that $X_0$ is connected. 
 Twisting by a character, we may assume that the determinant of $M_0$ is
 of finite order. Let us show that $M_0$ is algebraic and $\sigma$-unit
 root. Let $x_0\in X_0\setminus Z_0$ be a closed point, and $C_0$ be
 as in Theorem \ref{lefsthmtame} (see Remark \ref{lefsthmtame}). Using
 the notation of {\it ibid.}, $i^+_CM_0$ is irreducible with
 determinant of finite order. This implies that $i^+_CM_0$
 is algebraic and $\sigma$-unit-root by \cite[Thm.~4.2.2]{A},
 thus the claim follows.

 Now, we wish to apply Drinfeld's theorem~\ref{thm:drin_tame} to
 construct the companions.
 Deligne's theorem \ref{Delthmrec} (ii)
 shows that there exists a number field $E$ such that $f_x(M_0,t)\in
 E[t]$ for any finite extension $k'$ of $k$ and $x\in X_0(k')$. Let
 $\lambda$ be the place of $\sigma(E)$
 over $\ell$ corresponding to $\sigma$.
 Put $f_x(t):=f_x(M_0,t)$, and let us show that this collection
 of functions satisfies the assumptions of Theorem \ref{thm:drin_tame}.
 Since $M_0$ is $\sigma$-unit-root, for any smooth curve $C_0$ and a
 morphism $\varphi\colon C_0\rightarrow X_0$, we have a $\sigma$-companion
 ${}_\ell(\varphi^+M_0)$ of $\varphi^+M_0$ which is a
 {\em lisse \'{e}tale $\overline{\mb{Q}}_\ell$-sheaf}
 by \cite[Thm.~4.2.2]{A}. By using \cite[Lem.~2.7]{Dr} and
 \cite[\S2.3]{Dr}, there is a finite extension $F$ of
 $\sigma(E)_\lambda$ such that the monodromy of ${}_\ell(\varphi^+M_0)$
 is in $\mr{GL}(r,F)$
 for any $C_0$ and $\varphi$, thus  the assumption of
 Theorem~\ref{thm:drin_tame} (i) is satisfied.

 Let us check (ii). We put $X'_0:=X_0$.
 Take a smooth curve $\varphi\colon C_0\to X_0$, then since $M_0$ is
 assumed to be tame, the pull-back $\varphi^+M_0$ is tame.
 This implies that a companion ${}_\ell(\varphi^+M_0)$ is tame as well
 by the same argument as \cite[Lem.~2.3]{De}.
 (Alternatively, we may also argue that since the local epsilon factors
 coincide by Langlands correspondence, and since local epsilon factors
 detect the irregularity, the irregularity and Swan conductor coincide
 at each point.)
 In conclusion, the assumption of 
 Theorem~\ref{thm:drin_tame} (ii) is satisfied
 as well, and we may apply Drinfeld's theorem to construct the desired
 companion.
\end{proof}

\section{Wildly ramified case}
In this section, we show the Lefschetz type theorem for 
isocrystals by reduction  to the tame case. We keep the same notations
as in the previous section,
notably $X_0$, $k\subset k_n\subset\overline{k}$, $X_n$, $X$.
If $M_0\in \mr{Isoc}^\dag(X_0)$, we denote by $M^{\mr{ss}}_0$ the
semisimplification in $\mr{Isoc}^\dag(X_0)$, and likewise for
$M\in\mr{Isoc}^\dag(X)$ and $M^{\mr{ss}}$.

\subsection{}
\label{weilcons}
First, we recall the following the well-known consequence of  the Weil
conjectures (see \cite[Cor.~VI.3]{Laf} and \cite[Thm.~3.4.1 (iii)]{weil2}
for an $\ell$-adic counterpart of the theorem).

\begin{thm*}[{\normalfont\cite[Prop.~4.3.3]{A}, \cite[Thm.~4.3.1]{AC}}]
 (i) Let $X_0$ be a geometrically connected smooth scheme over $k$.
 Let $M_0, M'_0 \in {\rm Isoc}^\dag(X_0)$ be $\iota$-pure
 $F$-isocrystals on $X_0$. Assume that
 $M'_0$ is irreducible. Then, the multiplicity of $M'_0$ in $M_0$,
 in other words $\dim  \mr{Hom}(M'_0,M_0^{\mr{ss}})$, is equal to the
 order of pole of $L(X,M_0\otimes M'^{\vee}_0,t)$ at
 $t=q^{-\dim(X_0)}$.

 (ii) Let $M_0\in {\rm Isoc}^\dag(X_0)$ be $\iota$-pure. Then $M$ is
 semisimple.
\end{thm*}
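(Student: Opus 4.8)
These are the $p$-adic analogues of Deligne's Weil~II statements \cite[3.4.1]{weil2}, and the plan is to transcribe his arguments, with the $\ell$-adic weight estimates replaced by the $p$-adic ones of Kedlaya and of Abe--Caro. Three ingredients will be used repeatedly. First, the Grothendieck--Lefschetz trace formula in the arithmetic $\mathscr{D}$-module formalism: for $N_0\in D^{\mathrm b}_{\mathrm{hol}}(X_0)$ there is an identity of rational functions
\begin{equation*}
 L(X,N_0,t)=\prod_{i}\det\!\bigl(1-tF\mid H^i_c(X/k,N)\bigr)^{(-1)^{i+1}},
\end{equation*}
where $H^i_c$ is rigid cohomology with compact supports and $F$ is Frobenius. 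Second, Poincaré duality $H^i_c(X/k,N)\cong H^{2d-i}(X/k,N^{\vee})^{\vee}(-d)$ with $d=\dim X_0$, compatibly with Frobenius. Third, the purity estimates: every subquotient of an $\iota$-pure object is $\iota$-pure of the same weight, and if $N_0$ is $\iota$-pure of weight $\mu$ then $H^i_c(X/k,N)$ is $\iota$-mixed of weights $\le i+\mu$ while $H^i(X/k,N)$ is $\iota$-mixed of weights $\ge i+\mu$.

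\smallskip

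For (ii) I would induct on the rank using the socle. The socle $\mathrm{soc}(M)\subset M$ is canonical in $\mathrm{Isoc}^\dag(X)$, hence is carried into itself by the isomorphism $F^{+}M\xrightarrow{\sim}M$ furnished by the Frobenius structure of $M_0$; so $\mathrm{soc}(M)$ inherits a Frobenius structure and descends to a subobject $G_0\subset M_0$ in $\mathrm{Isoc}^\dag(X_0)$, with $G:=G_0|_{\overline{k}}=\mathrm{soc}(M)$ semisimple. Put $Q_0:=M_0/G_0$ and $Q:=Q_0|_{\overline{k}}$; the sequence $0\to G\to M\to Q\to 0$ gives a class in $\mathrm{Ext}^1_{\mathrm{Isoc}^\dag(X)}(Q,G)\hookrightarrow H^1(X/k,\mathcal{H}om(Q,G))$. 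Since $M_0$ is $\iota$-pure of weight $w$, all constituents of $G$ and of $Q$ have weight $w$, so $\mathcal{H}om(Q,G)$ is $\iota$-pure of weight $0$, and by the third ingredient $H^1(X/k,\mathcal{H}om(Q,G))$ has weights $\ge 1$. But our class is the image of a class over $X_0$, hence Frobenius-invariant, hence of weight $0$; therefore it vanishes, the sequence splits over $\overline{k}$, and as $G$ is the socle this forces $Q=0$. Thus $M=G$ is semisimple.

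\smallskip

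For (i), using (ii) I would first replace $M_0$ by its semisimplification, then, the multiplicity and the order of pole both being additive under the decomposition of $M_0^{\mathrm{ss}}$ into irreducibles, assume $M_0$ irreducible; we may also assume $M_0$ and $M'_0$ have the same weight (otherwise both sides vanish, the left because a nonzero morphism between irreducible $\iota$-pure objects forces equal weights). Put $N_0:=M_0\otimes M_0'^{\vee}$, which is $\iota$-pure of weight $0$; by Schur's lemma it remains to show that the order of pole of $L(X,N_0,t)$ at $t=q^{-d}$ is $1$ if $M_0\cong M'_0$ and $0$ otherwise. By the first ingredient this order equals $\sum_i(-1)^i\dim\bigl(H^i_c(X/k,N)\bigr)_{[q^{d}]}$, where $(\ )_{[q^{d}]}$ denotes the generalized $q^{d}$-eigenspace of $F$; by the second ingredient it equals $\sum_i(-1)^i\dim\bigl(H^{2d-i}(X/k,N^{\vee})\bigr)_{[1]}$. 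As $N^{\vee}$ is $\iota$-pure of weight $0$, the space $H^{j}(X/k,N^{\vee})$ has weights $\ge j$ by the third ingredient, so its generalized $1$-eigenspace vanishes for $j>0$; hence the order is $\dim\bigl(H^0(X/k,N^{\vee})\bigr)_{[1]}$. Finally $H^0(X/k,N^{\vee})=\mathrm{Hom}_{\mathrm{Isoc}^\dag(X)}(M,M')$ with its Frobenius action. By (ii), $M$ and $M'$ are semisimple over $\overline{k}$, so this space is the multiplicity space of the trivial object in $N^{\vee}$; since $M_0,M'_0$ are defined over $X_0$, Clifford theory (the relevant Galois group being procyclic) shows that $F$ acts semisimply on it, so its generalized $1$-eigenspace equals its $1$-eigenspace, which is $\mathrm{Hom}_{\mathrm{Isoc}^\dag(X_0)}(M_0,M'_0)$; by Schur's lemma this has dimension $1$ if $M_0\cong M'_0$ and $0$ otherwise.

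\smallskip

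The real content is the purity package (the third ingredient): the full $p$-adic Weil~II of Kedlaya and Abe--Caro, which is what is actually being cited. Granting it, the rest is the standard Weil~II bookkeeping, and the one point needing some care is the last step, namely the semisimplicity of Frobenius on the top cohomology, which one extracts from semisimplicity over $\overline{k}$ together with the procyclic Clifford-theory argument.
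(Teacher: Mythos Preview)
The paper does not actually give a proof of this statement: it is merely \emph{recalled} from \cite[Prop.~4.3.3]{A} and \cite[Thm.~4.3.1]{AC} as input for the rest of the argument (note the prefatory ``we recall the following well-known consequence of the Weil conjectures''). So there is nothing to compare against in the paper itself.

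That said, your sketch is essentially the standard Weil~II argument transposed to the $p$-adic setting, which is indeed what underlies the cited references. Both parts are correct. A few remarks:
\begin{itemize}
\item In (ii), your socle argument is exactly Deligne's \cite[3.4.1(iii)]{weil2}; the only thing to be careful about is that ``$\mathrm{Ext}^1_{\mathrm{Isoc}^\dag(X)}(Q,G)\hookrightarrow H^1(X,\mathcal{H}om(Q,G))$'' requires knowing that extensions of overconvergent $F$-isocrystals are detected in rigid cohomology, which is part of the $\mathscr{D}$-module formalism.
\item In (i), your Clifford-theory justification for the semisimplicity of $F$ on $\mathrm{Hom}_X(M,M')$ is correct and worth spelling out once: since $\mathrm{Gal}(\overline{k}/k)\cong\widehat{\mathbb Z}$ is procyclic, an irreducible $M_0$ restricts over $X$ to a direct sum $\bigoplus_{i=0}^{r-1}S_i$ of \emph{pairwise non-isomorphic} simples permuted cyclically by $F$ (multiplicity one because $\widehat{\mathbb Z}$ has only $1$-dimensional irreducibles); hence $\mathrm{Hom}_X(M,M')$ is either $0$ or $r$-dimensional with $F^r$ acting by a scalar, so $F$ is diagonalizable. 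An alternative, slightly slicker route is to observe that $H^0(X,N^\vee)$ embeds $F^{\deg x}$-equivariantly into the stalk at any closed point $x$, hence is $\iota$-pure of weight~$0$; combined with the same embedding for all $x$ one can also extract semisimplicity, but your argument is cleaner.
\end{itemize}
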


\begin{lem}
 \label{coinH0}
 Let $X_0$ be a geometrically connected smooth scheme over $k$. Let
 $M_0$ be an $\iota$-pure isocrystal,
 and ${}_\ell M_0$ be an $\ell$-adic companion. Then
 \begin{equation*}
  \dim H^0(X,M)=\dim H^0(X,{}_{\ell}M).
 \end{equation*}
\end{lem}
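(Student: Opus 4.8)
The plan is to reduce both quantities to sums of multiplicities in semisimple objects, and to recognise those multiplicities as orders of poles of $L$-functions built from the eigenpolynomials of $M_0$ — quantities visibly carried to one another by $\sigma$. First, since $M_0$ is $\iota$-pure, its pull-back $M$ to $X$ is semisimple by the theorem recalled in~\ref{weilcons}~(ii); the $\ell$-adic companion ${}_\ell M_0$ is again pure (its local eigenpolynomials being the $\sigma$-transforms of those of $M_0$), so ${}_\ell M$ is semisimple by the $\ell$-adic counterpart \cite[Thm.~3.4.1~(iii)]{weil2}. Consequently $\dim H^0(X,M)$ equals the multiplicity of the trivial object $\overline{\mb{Q}}_{p,X}$ in $M$, and $\dim H^0(X,{}_\ell M)$ equals the multiplicity of the trivial sheaf in ${}_\ell M$.

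Next I compute the $p$-adic multiplicity. For $\alpha\in\overline{\mb{Q}}_p^\times$, let $L_\alpha\in\mr{Isoc}^\dag(X_0)$ be the rank $1$ constant isocrystal pulled back from $\mr{Spec}(k)$ whose degree one Frobenius is multiplication by $\alpha$. If $V_0$ is an irreducible constituent of $M_0$ with $\overline{\mb{Q}}_{p,X}\subset V_0|_X$, then the $\pi_1^{\mr{isoc}}(X)$-invariants of $V_0$ form a nonzero $\pi_1^{\mr{isoc}}(X_0)$-stable subspace, hence all of $V_0$ by irreducibility; so $V_0$ factors through $\pi_1^{\mr{isoc}}(X_0)/\pi_1^{\mr{isoc}}(X)$, which (by the homotopy exact sequence) is a quotient of the commutative group $\pi_1^{\mr{isoc}}(\mr{Spec}(k))$ — the Tannaka group of $\overline{\mb{Q}}_p$-vector spaces with an automorphism. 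Hence $\dim V_0=1$ and $V_0\cong L_\alpha$ for a unique $\alpha$, and then $\overline{\mb{Q}}_{p,X}$ occurs in $V_0|_X=\overline{\mb{Q}}_{p,X}$ with multiplicity $1$. Since every constituent of the $\iota$-pure $M_0$ is $\iota$-pure, this gives $\dim H^0(X,M)=\sum_{\alpha\in\overline{\mb{Q}}_p^\times}m_p(\alpha)$, a finite sum, where $m_p(\alpha)$ is the multiplicity of $L_\alpha$ in $M_0^{\mr{ss}}$; and by~\ref{weilcons}~(i), applicable because $L_\alpha$ is rank $1$, hence $\iota$-pure, $m_p(\alpha)=\mathrm{ord}_{t=q^{-d}}\,L\bigl(X_0,M_0\otimes L_\alpha^{\vee},t\bigr)$ with $d=\dim X_0$.

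The identical computation on the $\ell$-adic side yields $\dim H^0(X,{}_\ell M)=\sum_{\beta\in\overline{\mb{Q}}_\ell^\times}\mathrm{ord}_{t=q^{-d}}\,L\bigl(X_0,{}_\ell M_0\otimes L_\beta^{\vee},t\bigr)$, where $L_\beta$ is the corresponding constant lisse Weil sheaf. Because ${}_\ell M_0$ is a $\sigma$-companion, applying $\sigma$ coefficientwise to the Euler factors identifies $L(X_0,M_0\otimes L_\alpha^{\vee},t)$ with $L(X_0,{}_\ell M_0\otimes L_{\sigma(\alpha)}^{\vee},t)$ as rational functions; since $q^{-d}\in\mb{Q}$ is fixed by $\sigma$, the pole order at $t=q^{-d}$ is preserved, and $\alpha\mapsto\sigma(\alpha)$ is a bijection $\overline{\mb{Q}}_p^\times\to\overline{\mb{Q}}_\ell^\times$. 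Hence the two sums agree term by term, which is the asserted equality.

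The only step that is not formal bookkeeping is the middle one: the \emph{geometric} $H^0$ detects not just the trivial isocrystal on $X_0$ but all of its constant rank $1$ twists $L_\alpha$, and one must verify that these exhaust the contributing constituents and each contributes multiplicity one — this is exactly where the normality of $\pi_1^{\mr{isoc}}(X)$ in $\pi_1^{\mr{isoc}}(X_0)$ with commutative quotient is used. The remaining ingredients — semisimplicity of pure objects, the multiplicity/pole-order dictionary of~\ref{weilcons}, and the rationality and Euler product shape of the $L$-functions — are all available from \cite{A}, \cite{AC}, \cite{weil2}.
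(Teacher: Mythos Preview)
Your argument is correct and takes the same route as the paper: semisimplicity from purity, identification of the constituents contributing to the geometric $H^0$ as the constant rank-one twists, conversion of their multiplicities to pole orders of $L$-functions via \ref{weilcons}~(i), and matching under $\sigma$. The paper differs only cosmetically, passing to $X_n$ with $n$ large enough that the geometric and arithmetic constituents agree (and indexing the twists by $s\in\mb{C}$ via $\iota$) rather than staying on $X_0$; your Tannakian phrasing via a ``homotopy exact sequence'' for $\pi_1^{\mr{isoc}}$ is not established in the paper, but the fact it encodes --- that the Frobenius structure restricts to the maximal geometrically trivial subobject, so an irreducible $V_0\in\mr{Isoc}^\dag(X_0)$ with $H^0(X,V_0|_X)\neq0$ is pulled back from $\mr{Spec}(k)$ and hence some $L_\alpha$ --- is immediate from the description of $\mr{Isoc}^\dag(X_0)$ as pairs $(V,\Phi)$.
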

\begin{proof}
 There exists an integer $n\geq a$ such that the number of constituents
 of $M_n$ and $M$ coincide. One has
 \begin{align*}
  \dim H^0(X,M)&=\dim H^0(X,M^{\mr{ss}})=
  \dim \mr{Hom}(\overline{\mb{Q}}_p,M^{\mr{ss}})\\
  &=\sum_{s\in\mb{C}}\dim\bigl(\mr{Hom}
  (\overline{\mb{Q}}_p(s),(M_n)^{\mr{ss}})\bigr),
 \end{align*}
 where the first equality holds since $M$ is semisimple by 
 Theorem \ref{weilcons} (ii), the middle one is by definition, and the
 last one since for any isocrystal $N_n$ in
 $\mr{Isoc}^\dag(X_n)$ such that $N\cong\overline{\mb{Q}}_p$,
 there exists $s\in\mb{C}$ such that
 $N_n\cong\overline{\mb{Q}}_p(s)$. Indeed, fix an isomorphism
 $N\cong\overline{\mb{Q}}_p$. With this identification, $N_n$ yields an
 isomorphism
 $\Phi\colon
 F^{n+}\overline{\mb{Q}}_p\xrightarrow{\sim}\overline{\mb{Q}}_p$. Let
 $\mr{can}\colon F^{n+}\overline{\mb{Q}}_p\xrightarrow{\sim}
 \overline{\mb{Q}}_p$ be
 the Frobenius structure of
 $\overline{\mb{Q}}_p\in\mr{Isoc}(X_n)$. Giving $\Phi$ is equivalent to
 giving $\Phi\circ\mr{can}^{-1}(1)$, which uniquely determines $s$ such
 that $N_n\cong\overline{\mb{Q}}_p(s)$.
 
 By Theorem \ref{weilcons} (i), the dimension of
 $\mr{Hom}(\overline{\mb{Q}}_p(s),M^{\mr{ss}}_n)$ is
 equal to the order of pole of $L(X_n,M_n(-s),t)$ at $t=q^{-dn}$, where $d$
 denotes the dimension of $X_0$. 
 The similar result holds for ${}_{\ell}M_{n}$, by increasing $n$
 if needed,
 so the lemma holds since $M_n(s)$ and ${}_{\ell}M_{n}(s)$ have
 the same $L$-function. 
\end{proof}

\subsection{}
Let $X_0$ be a smooth scheme over $k$.
Let $M_n$ (resp.\ ${}_{\ell}M_n$) be in ${\rm Isoc}^\dag(X_n)$
(resp.\ lisse Weil $\ell$-adic sheaf on $X_n$). We say $M_n$ (resp.\
${}_{\ell}M_n$) {\it satisfies} (C) {\it with respect to a finite
\'{e}tale cover} $X'\rightarrow X$ if it satisfies the following
condition:
\begin{quote}
 (*)
 For any smooth curve   $i\colon C_0\rightarrow X_0$
 such that
 \begin{equation*}
  \#\pi_0(C_0\times _{X_0}X')=\#\pi_0(X'),
 \end{equation*} 
 the pull-back homomorphism $H^0(X,M)\rightarrow
 H^0(C,i^+M)$
 (resp.\ $H^0(X,{}_\ell{M})\rightarrow H^0(C,i^*{}_\ell{M})$)
 is an isomorphism, where $C=C_0\times_{X_0}X$.
\end{quote}

\begin{lem}
 \label{companC}
 Let $X_0$ be a geometrically connected smooth scheme, and $M_n$ be an
 $\iota$-pure isocrystal. Let ${}_{\ell}M_{n}$ be an $\ell$-adic
 companion.  Then if ${}_{\ell}M_{n}$ satisfies {\normalfont (C)} with
 respect to $X'\rightarrow X$,  so does  $M_n$.
\end{lem}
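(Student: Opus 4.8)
The plan is to deduce the statement for $M_n$ from the statement for ${}_\ell M_n$ by comparing the two pull-back maps on $H^0$ through the dimension count already available. Fix a smooth curve $i\colon C_0\to X_0$ satisfying $\#\pi_0(C_0\times_{X_0}X')=\#\pi_0(X')$, and put $C=C_0\times_{X_0}X$. Since ${}_\ell M_n$ is assumed to satisfy (C), the map $H^0(X,{}_\ell M)\to H^0(C,i^*{}_\ell M)$ is an isomorphism; in particular $\dim H^0(X,{}_\ell M)=\dim H^0(C,i^*{}_\ell M)$. The goal is to show the corresponding pull-back $H^0(X,M)\to H^0(C,i^+M)$ is an isomorphism, and since it is always injective (see below), it suffices to match the dimensions of source and target with those on the $\ell$-adic side.

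First I would record injectivity of $H^0(X,M)\to H^0(C,i^+M)$: an element of $H^0(X,M)$ is a morphism $\overline{\mb Q}_{p,X}\to M$ in $\mr{Isoc}^\dag(X)$, and since $C_0\to X_0$ is dominant and $X_0$ is geometrically connected (hence $X$ is connected), a nonzero such morphism cannot pull back to zero — concretely its image is a nonzero subobject whose rank can be tested on the dense curve. (Alternatively this injectivity is built into the setup in \ref{ss:functors} via the $(!C)$-triangle.) Next, the key input is Lemma~\ref{coinH0}: because $M_n$ is $\iota$-pure, $\dim H^0(X,M)=\dim H^0(X,{}_\ell M)$. Applying the same lemma to the curve — noting $i^+M_n$ is $\iota$-pure as the pull-back of an $\iota$-pure isocrystal, and $i^*{}_\ell M_n$ is an $\ell$-adic companion of $i^+M_n$ because the companion relation is compatible with pull-back (it is a pointwise condition on eigenpolynomials) — gives $\dim H^0(C,i^+M)=\dim H^0(C,i^*{}_\ell M)$.

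Chaining these equalities:
\begin{equation*}
 \dim H^0(X,M)=\dim H^0(X,{}_\ell M)=\dim H^0(C,i^*{}_\ell M)=\dim H^0(C,i^+M),
\end{equation*}
where the middle equality is the hypothesis that ${}_\ell M_n$ satisfies (C). Since the pull-back $H^0(X,M)\to H^0(C,i^+M)$ is injective and the two spaces have equal finite dimension, it is an isomorphism. As $C_0$ was an arbitrary curve meeting the $\pi_0$-condition, $M_n$ satisfies (C) with respect to $X'\to X$, as desired.

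The main obstacle I expect is purely bookkeeping: checking that $i^*{}_\ell M_n$ genuinely qualifies as an $\ell$-adic companion of $i^+M_n$ in the precise sense needed to invoke Lemma~\ref{coinH0} (one must see that the companion correspondence commutes with $i^+$ versus $i^*$, which is immediate from the definition in \ref{eigenpol} since both sides compute the same eigenpolynomials at closed points of $C_0$), and confirming $i^+M_n$ remains $\iota$-pure — the latter because purity is detected on eigenpolynomials at closed points and closed points of $C_0$ are closed points of $X_0$. Neither is deep, but both must be stated; beyond that the argument is a formal dimension comparison with no further geometric content.
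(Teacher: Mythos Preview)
Your argument is correct and is essentially the paper's own proof: both invoke Lemma~\ref{coinH0} on $X$ and on $C$ and chain through the hypothesis on ${}_\ell M_n$ to match dimensions. You make the injectivity of the pull-back on $H^0$ explicit (the paper simply exhibits the chain of dimension equalities and stops), whereas the paper records that $C_0$ is geometrically connected---a hypothesis of Lemma~\ref{coinH0}---which you should also state before applying that lemma on the curve.
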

\begin{proof}
 Take $C_0$ as in (*). Since $X_0$ is assumed to be geometrically
 connected, $C_0$ is geometrically connected as well. By definition,
 $i^*{}_\ell M$ is an $\ell$-adic companion of $i^+M$, and these are
 $\iota$-pure. Thus, we have
 \begin{equation*}
  \dim H^0(X,M)=\dim H^0(X,{}_\ell M)=\dim H^0(C,i^*{}_\ell M)
  =\dim H^0(C,i^+ M),
 \end{equation*}
 where the first and the last equality hold by Lemma \ref{coinH0}, and
 the middle one by assumption.
\end{proof}

\begin{lem}
 \label{ladicex}
 Assume $X_0$ is smooth and geometrically connected over $k$.
 Let ${}_{\ell}M_{0}$ be a lisse
 Weil $\overline{\mb{Q}}_\ell$-sheaf on
 $X_0$.
 Then there exists a finite \'{e}tale cover $g\colon X'\rightarrow
 X$ such that any ${}_{\ell}N\in\left<{}_{\ell}M\right>$ satisfies
 {\normalfont (C)} with respect to $g$.
\end{lem}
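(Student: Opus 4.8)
The plan is to produce the cover $g\colon X'\to X$ using the monodromy representation of the Weil sheaf ${}_\ell M_0$. Fix a geometric point and let $\rho\colon W(X_0)\to \mathrm{GL}(V)$ be the representation attached to ${}_\ell M_0$, so that $\langle {}_\ell M\rangle$ corresponds to representations of the geometric monodromy group, i.e. the Zariski closure $G$ of the image of $\pi_1(X,\bar x)$ in $\mathrm{GL}(V)$. Since $G$ has finitely many connected components, there is a finite-index normal subgroup of $\pi_1(X,\bar x)$ whose image lands in the identity component $G^\circ$; this corresponds to a finite connected étale Galois cover $X'\to X$ over $\overline k$, which (after a further finite extension of the base field, harmless for the statement) descends to a finite étale cover $g\colon X'\to X$ in the sense used in condition~(C). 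Concretely, $X'$ is the cover trivializing $\pi_0(G)$: for every ${}_\ell N\in\langle {}_\ell M\rangle$, the geometric monodromy of $g^*{}_\ell N$ is connected.

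Next I would reduce the required statement to a statement about $H^0$ over curves. For ${}_\ell N\in\langle {}_\ell M\rangle$, the space $H^0(X,{}_\ell N)$ is the space of $\pi_1(X,\bar x)$-invariants of the underlying representation $W$ of ${}_\ell N$, and since $g^*$ preserves these invariants (the cover being a subgroup), $H^0(X,{}_\ell N)=H^0(X',g^*{}_\ell N)=W^{G}=W^{G^\circ}$, the last equality because $X'$ was chosen so that the geometric monodromy of $g^*{}_\ell N$ is $G^\circ$ and the component group acts trivially on $G^\circ$-invariants by construction. Now take a curve $i\colon C_0\to X_0$ with $\#\pi_0(C_0\times_{X_0}X')=\#\pi_0(X')$; this condition says exactly that the image of $\pi_1(C,\bar c)\to\pi_1(X,\bar x)$ meets every coset of the open subgroup defining $X'$, equivalently that the composite $\pi_1(C,\bar c)\to\pi_1(X,\bar x)\to \pi_0(G)$ is surjective. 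Therefore the Zariski closure of the monodromy of $i^*{}_\ell N$ surjects onto $\pi_0(G)$, so its identity component still contains $G^\circ$ (indeed it is exactly a group with the same identity component, by the usual Lefschetz/Bertini-type statement for $\ell$-adic sheaves — but all I need is the containment $G^\circ\subseteq$ closure of monodromy of $i^*{}_\ell N$). Hence $H^0(C,i^*{}_\ell N)=W^{\,\mathrm{mon}(i^*{}_\ell N)}=W^{G^\circ}$ as well, and the restriction map $H^0(X,{}_\ell N)\to H^0(C,i^*{}_\ell N)$ is the identity on $W^{G^\circ}$, hence an isomorphism. This is condition~(*), so ${}_\ell N$ satisfies~(C) with respect to $g$.

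The main obstacle is the step asserting $G^\circ\subseteq$ (closure of the monodromy of $i^*{}_\ell N$): a priori the condition $\#\pi_0(C_0\times_{X_0}X')=\#\pi_0(X')$ only controls the component group, not the identity component. The clean way around this is to note that we are free to \emph{enlarge} $X'$: replace the cover trivializing $\pi_0(G)$ by the cover $X''\to X$ attached to \emph{some} open subgroup $H\subseteq\pi_1(X,\bar x)$ that is contained in the preimage of $G^\circ$ and is moreover chosen so that condition $\#\pi_0(C_0\times_{X_0}X'')=\#\pi_0(X'')$ forces the monodromy of $i^*{}_\ell N$ to contain $G^\circ$ for all ${}_\ell N\in\langle{}_\ell M\rangle$ simultaneously. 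Since $\langle {}_\ell M\rangle$ has a single tensor generator, there are only finitely many "new subgroups" of $G$ to rule out — namely the closed subgroups of $G$ of the form ``$G^\circ\cdot$(subgroup of $\pi_0 G)$'' that could arise — so a single finite étale cover $X'$ suffices; any curve passing the $\pi_0$-test through that cover has full identity-component monodromy, and the argument of the previous paragraph goes through verbatim. Finally, one checks that $X'$ can be taken defined over $k$ (not merely over $\overline k$): since only finitely many covers are involved and Frobenius permutes them, replacing $k$ by a finite extension — which is allowed, as (C) and the whole setup are insensitive to the base field by Lemma~\ref{lem:k} — makes $X'\to X$ Frobenius-stable, hence of the form $X'_0\times_k\overline k$.
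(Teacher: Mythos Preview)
Your plan has a genuine gap, and the patch you propose does not close it. The step you flag yourself --- that the $\pi_0$-condition on $C_0$ forces the Zariski closure of the monodromy of $i^*{}_\ell N$ to contain $G^\circ$ --- is indeed the whole problem, and your fix by ``enlarging $X'$'' fails because your finiteness claim is false. The closed subgroups $H\subset G$ that surject onto $\pi_0(G)$ but do not contain $G^\circ$ are \emph{not} of the form $G^\circ\cdot(\text{subgroup of }\pi_0 G)$; those are exactly the ones already containing $G^\circ$. The relevant $H$'s are those with $H\cap G^\circ\subsetneq G^\circ$, and for a typical $G^\circ$ (say $\mathrm{GL}_r$) there are infinitely many such algebraic subgroups. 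No single finite \'etale cover, via a $\pi_0$-condition alone, can rule all of them out by the argument you sketch; some further input is needed to pass from a finite condition on $\pi_1(C)$ to control of the full (Zariski-dense) image.

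The paper's proof avoids algebraic groups entirely. It works with the \emph{set-theoretic} image $G:=\rho(\pi_1(X))\subset\mathrm{GL}(r,\overline{\mb{Q}}_\ell)$, a compact $\ell$-adic Lie group, and invokes the Frattini-type statement of \cite[B.2]{EK} (valid over $\overline{k}$ since only the finiteness of $H^1_{\mr{\acute{e}t}}(-,\mb{Z}/\ell)$ is used): there is a finite Galois cover $g\colon X'\to X$ such that for any profinite $K\to\pi_1(X)$, surjectivity of $K\to\mr{Gal}(X'/X)$ forces surjectivity of $K\to G$. The $\pi_0$-condition on $C_0$ is exactly surjectivity of $\pi_1(C)\to\mr{Gal}(X'/X)$, hence $\pi_1(C)$ surjects onto $G$ itself, and invariants on any representation of $G$ (in particular on every ${}_\ell N\in\langle{}_\ell M\rangle$) coincide over $X$ and over $C$. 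This is the missing idea in your argument: the finiteness you need is not a finiteness of algebraic subgroups of the Zariski closure, but the openness of the Frattini subgroup of the compact image, which is what \cite[B.2]{EK} supplies.
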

\begin{proof}
 Let $\rho\colon\pi_1(X)\rightarrow \mr{GL}(r, \overline{\mb{Q}}_\ell)$
 be the representation corresponding to ${}_{\ell}M$, and set
 $G:=\mr{Im}(\rho)$. The argument of \cite[B.2]{EK} holds also for
 schemes over $\overline{k}$, since only the finiteness of
 $H^1_{\mr{\acute{e}t}}(-,\mb{Z}/\ell)$ is used.
 Thus, there exists a finite Galois cover
 $g\colon X'\rightarrow X$ such that for any profinite group $K$ mapping
 to $\pi_1(X)$, such that the composite  $K\rightarrow \pi_1(X)
 \rightarrow \mr{Gal}(X'/X)$ is surjective,  the composite $K\to
 \pi_1(X)\rightarrow G$ is surjective as well.
 Now, ${}_{\ell}N$ is a representation of $G$, and the
 geometric condition on $C$ asserts that the map
 $\pi_1(C)\rightarrow\mr{Gal}(X'/X)$ is surjective,
 thus any ${}_{\ell}N\in\left<{}_{\ell}M\right>$ satisfies
 (C) with respect to $g$.
\end{proof}

\begin{lem}
 \label{compirr}
Assume $X_0$ is smooth and geometrically connected over $k$.
      Let $N_n$ be an $\iota$-pure isocrystal on $X_n$,
 and ${}_{\ell}N_{n}$ be an
 $\ell$-adic companion.  Then if $N_n$ is geometrically irreducible ({\it
 i.e.}\ $N$ is irreducible), so is ${}_{\ell}N_n$.
\end{lem}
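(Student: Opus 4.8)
The plan is to reduce irreducibility of ${}_\ell N_n$ to a dimension count of $H^0$-spaces, exactly as irreducibility of isocrystals was handled earlier. First I would pass to the semisimplifications: since $N_n$ is $\iota$-pure, $N$ is semisimple by Theorem~\ref{weilcons}~(ii), and $N$ being irreducible means $N_n$ is geometrically irreducible; likewise, after increasing $n$ if necessary, the number of geometric constituents of ${}_\ell N_n$ equals the number of constituents of ${}_\ell N$ over $\overline{k}$. The key observation is that irreducibility of a semisimple object $P$ on a geometrically connected scheme is detected by $\dim\operatorname{Hom}(P,P)=\dim H^0(X,\shom(P,P))$: this equals $1$ iff $P$ is geometrically irreducible (the endomorphism algebra of a semisimple object being a product of matrix algebras over $\overline{\mb{Q}}_p$ resp.\ $\overline{\mb{Q}}_\ell$). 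So I would compute $\dim H^0(X, N\otimes N^\vee)$ on the isocrystal side and $\dim H^0(X, {}_\ell N\otimes {}_\ell N^\vee)$ on the $\ell$-adic side and show they agree.

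For the comparison itself I would invoke Lemma~\ref{coinH0}: the object ${}_\ell N_n\otimes {}_\ell N_n^\vee$ is an $\ell$-adic companion of $N_n\otimes N_n^\vee$ (companions are compatible with $\otimes$ and $\vee$ on eigenpolynomials), and both are $\iota$-pure of weight $0$ after twisting to make determinants finite—or more directly, $N_n\otimes N_n^\vee$ is $\iota$-mixed by Theorem~\ref{thm:pure} and its weight-$0$ part is what contributes to $H^0$, with the analogous statement on the $\ell$-adic side. Hence
\begin{equation*}
 \dim H^0(X, N\otimes N^\vee)=\dim H^0(X, {}_\ell N\otimes {}_\ell N^\vee),
\end{equation*}
using Lemma~\ref{coinH0} applied to the pure pieces. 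Since $N$ is irreducible, the left-hand side is $1$; therefore the right-hand side is $1$, forcing ${}_\ell N$ to be irreducible, i.e.\ ${}_\ell N_n$ geometrically irreducible.

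The main obstacle I anticipate is bookkeeping about weights and the passage to $X_n$: Lemma~\ref{coinH0} is stated for $\iota$-pure isocrystals, but $N\otimes N^\vee$ is only $\iota$-mixed, so I need either to restrict to the pure weight-$0$ summand (and check that only it contributes to $H^0$, using that $H^0$ of a negative-weight object vanishes and positive weights cannot occur in $H^0$ of a mixed object on a geometrically connected scheme) or to decompose into pure constituents and apply Lemma~\ref{coinH0} constituent-by-constituent, matching constituents on the two sides via the companion relation on $L$-functions as in the proof of Lemma~\ref{coinH0}. A minor point is ensuring that "$\ell$-adic companion" is understood as a \emph{Weil} sheaf and that its semisimplification as a Weil sheaf is what enters; this is exactly the setting of Theorem~\ref{weilcons}, so no extra input is needed. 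Once the weight bookkeeping is arranged, the rest is formal.
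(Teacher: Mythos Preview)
Your approach is essentially the paper's: reduce geometric irreducibility to $\dim\mr{End}=\dim H^0$ of the internal $\shom$, and compare the two sides via Lemma~\ref{coinH0}. The anticipated obstacle, however, is illusory: since $N_n$ is $\iota$-pure of some weight $w$ by hypothesis, $N_n^\vee$ is $\iota$-pure of weight $-w$, and hence $\send(N_n)\cong N_n\otimes N_n^\vee$ is $\iota$-pure of weight $0$, not merely $\iota$-mixed; Lemma~\ref{coinH0} therefore applies directly with no weight decomposition needed. With that simplification your argument and the paper's coincide.
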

\begin{proof}
 We may assume $N_n\neq0$ and $X_n$ is connected, in which case, $X_n$
 is geometrically irreducible.
 On the other hand,  an $\iota$-pure lisse Weil
 $\overline{\mb{Q}}_\ell$-sheaf $L_n$ on $X_n$ is geometrically
 irreducible if and only if $\mr{End}(L)$ is of dimension $1$. Indeed,
 as $\send(L_n)$ is assumed to be $\iota$-pure, $\send(L)$
 is semisimple (\cite[Thm.~3.4.1 (iii)]{weil2}), thus 
 $\dim\mr{End}(L)=\dim\mr{End}(L^{\mr{ss}})$.
To conclude the proof, we have
 $\dim\mr{End}({}_{\ell}N)=\dim\mr{End}(N)$
 by Lemma~\ref{coinH0}, and the latter is equal
 to $1$ since $N$ is assumed to be irreducible.
\end{proof}

\begin{cor}
 \label{constC}
 Let $X_0/k$ be a connected (but not necessarily geometrically connected)
 scheme.
 Let $M_0$ be an $\iota$-pure isocrystal on $X_0$. Assume
  that for any  $n \in \mathbb{N}_{>0}$,  any  $N_n\in\left<M_n\right>$ has 
 an $\ell$-adic companion.
 Then there exists a finite \'{e}tale cover $g\colon X'\rightarrow X$
 such that any $N\in\left<M\right>$ satisfies {\normalfont (C)} with
 respect to $g$.
\end{cor}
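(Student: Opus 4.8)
The plan is to produce the cover $g\colon X'\to X$ on the $\ell$-adic side by means of Lemma~\ref{ladicex} applied to an $\ell$-adic companion of $M$, and then to transport condition~(C) back to $M$ itself via Lemma~\ref{companC} (which in turn rests on Lemma~\ref{coinH0}).

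First I would reduce to the case where $X_0$ is geometrically connected. Since $X_0$ is connected, replacing $k$ by the field of constants $k'$ of $X_0$ (a finite subextension of $\overline{k}/k$) makes $X_0$ geometrically connected; this leaves the $\overline{k}$-scheme $X$ and all the cohomology groups occurring in~(C) unchanged (Lemma~\ref{compdiffbase}) and preserves the hypothesis, because the category $\langle M_n\rangle$ and the notion of $\ell$-adic companion do not depend on the finite base field. So assume $X_0$ geometrically connected, hence $X$ connected. Applying the hypothesis with $n=1$ and $N_1=M_1$, choose an $\ell$-adic companion of $M_1$ and let ${}_\ell M$ be its pull-back to $X$; it is $\iota$-pure, hence a semisimple lisse Weil $\overline{\mb{Q}}_\ell$-sheaf on $X$ by \cite[Thm.~3.4.1 (iii)]{weil2}. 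Lemma~\ref{ladicex}, applied with $(X_1,k_1)$ in place of $(X_0,k)$, yields a finite \'{e}tale cover $g\colon X'\to X$ such that every object of $\langle {}_\ell M\rangle$ satisfies~(C) with respect to $g$. I claim this $g$ has the required property.

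Let $N\in\langle M\rangle$. For $n$ large enough, $N$ is the pull-back of some $N_n\in\langle M_n\rangle$, and $N_n$ is $\iota$-pure, being a direct summand of some $T_n=\bigoplus_i M_n^{\otimes a_i}\otimes M_n^{\vee\otimes b_i}$ (a subquotient of an $\iota$-pure, hence by Theorem~\ref{weilcons} semisimple, object is a direct summand). Writing $T_n=N_n\oplus N'_n$, the summand $N'_n$ again lies in $\langle M_n\rangle$, so by hypothesis $N_n$, $N'_n$ and $T_n$ all possess $\ell$-adic companions; let ${}_\ell N$ be the pull-back to $X$ of one for $N_n$, and note that, like ${}_\ell M$, all these companions are $\iota$-pure for a suitable isomorphism $\overline{\mb{Q}}_\ell\cong\mb{C}$, hence semisimple. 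Comparing characteristic polynomials of Frobenius at closed points, both $\bigoplus_i({}_\ell M_n)^{\otimes a_i}\otimes({}_\ell M_n)^{\vee\otimes b_i}$ and the direct sum of a companion of $N_n$ with one of $N'_n$ are $\ell$-adic companions of $T_n$; since a semisimple lisse Weil $\overline{\mb{Q}}_\ell$-sheaf on a geometrically connected scheme of finite type over a finite field is determined by its Frobenius characteristic polynomials (Chebotarev density), these two sheaves are isomorphic. Restricting to $X$, it follows that ${}_\ell N$ is a direct summand of $\bigoplus_i({}_\ell M)^{\otimes a_i}\otimes({}_\ell M)^{\vee\otimes b_i}$, so that ${}_\ell N\in\langle {}_\ell M\rangle$.

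Consequently ${}_\ell N$ satisfies~(C) with respect to $g$ by the choice of $g$; since $N_n$ is $\iota$-pure with $\ell$-adic companion ${}_\ell N_n$ and $X_0$ is geometrically connected, Lemma~\ref{companC} shows that $N_n$ satisfies~(C) with respect to $g$, which is exactly the assertion that $N$ does, because condition~(C) for $N_n$ refers only to the pull-back $N$ to $X$. This would prove the corollary. I expect the main obstacle to be the membership ${}_\ell N\in\langle {}_\ell M\rangle$: the hypothesis delivers $\ell$-adic companions only as abstract sheaves matching Frobenius eigenpolynomials, with no a priori functoriality, so one has to rigidify them by the Chebotarev uniqueness of semisimple Weil sheaves, using crucially that $\iota$-purity is inherited by tensor products, duals and direct summands, so that semisimplicity is preserved throughout.
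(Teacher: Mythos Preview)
Your strategy matches the paper's: obtain the cover from Lemma~\ref{ladicex} applied to an $\ell$-adic companion of $M$, show that for every $N\in\langle M\rangle$ the companion ${}_\ell N$ lies in $\langle{}_\ell M\rangle$, and transfer~(C) via Lemma~\ref{companC}. The only real difference is that for the membership ${}_\ell N_n\in\langle{}_\ell M_n\rangle$ you argue by Chebotarev uniqueness of semisimple companions, whereas the paper first reduces to $N$ irreducible and then uses the $L$-function criterion of Theorem~\ref{weilcons}~(i) together with Lemma~\ref{compirr}.

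There is, however, a genuine gap in your execution. You claim that $N_n$ is a direct summand of $T_n$ because $T_n$ is ``$\iota$-pure, hence by Theorem~\ref{weilcons} semisimple''. But Theorem~\ref{weilcons}~(ii) asserts only that the pull-back $T$ to $X$ is semisimple, not that $T_n$ is semisimple in $\mr{Isoc}^\dag(X_n)$; $\iota$-pure $F$-isocrystals over a finite field need not be arithmetically semisimple (already on $\mr{Spec}(k)$ a nontrivial unipotent Frobenius is $\iota$-pure of weight~$0$). Moreover $T_n$, being a direct sum of tensor monomials of possibly different weights, is only $\iota$-mixed, so neither the purity of $N_n$ nor the splitting $T_n=N_n\oplus N'_n$ on which your Chebotarev comparison rests is justified as written. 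The repair is exactly what the paper does: use the \emph{geometric} semisimplicity of $\langle M\rangle$ to reduce to $N$ irreducible, so that $N_n$ is an irreducible constituent of a single pure $M_n^{m,m'}$; then your Chebotarev argument (run on semisimplifications) or the paper's $L$-function argument goes through. A smaller inaccuracy: replacing $k$ by the field of constants $k'$ does \emph{not} leave $X$ unchanged, since $X_0\otimes_k\overline{k}$ is $[k'\!:\!k]$ disjoint copies of $X_0\otimes_{k'}\overline{k}$; condition~(C) is nonetheless preserved by Lemma~\ref{compdiffbase}, and the paper handles this instead by treating the geometrically connected case first and then assembling the cover as a disjoint union over $\mr{Gal}(k'/k)$.
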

\begin{proof}
 First assume $X_0$ is geometrically connected.
 Let $X'\rightarrow X$ be a finite \'{e}tale cover as in Lemma
 \ref{ladicex} for the $\ell$-adic sheaf ${}_{\ell}M$.
 Our goal is to show that this cover satisfies the required
 condition.
\medskip

 Take $N\in\left<M\right>$.
 Since the category $\left<M\right>$ is semisimple, it is enough to
 check (C) for any irreducible $N$.
 Then we can find $N_n\in\left<M_n\right>$ which induces
 $N$ for some $n\geq0$.
 Take integers $m$, $m'$ such that $N_n$ is a subquotient of
 $M_n^{m,m'}$. By Lemma~\ref{companC}, it remains to show that its
 $\ell$-adic companion $N_{\ell,n}$ is in
 $\left<{}_{\ell}M_n\right>$. By Lemma~\ref{compirr}, we know that
 ${}_{\ell}N_n$ is irreducible as well.
 Since $N_n$ is assumed to be a subquotient
 of $M^{m,m'}_n$, $L(X_n,M^{m,m'}_n\otimes N^{\vee}_n,t)$ has a pole
 at $t=q^{-nd}$ by Theorem~\ref{weilcons} (i). This implies that
 $L(X_n,{}_{\ell}M_n^{m,m'}\otimes{}_{\ell}N_n^{\vee},t)$ has a pole
 at $t=q^{-nd}$ as well.
 It follows again by Theorem~\ref{weilcons} (i) that ${}_{\ell}N_n$ is a
 subquotient of ${}_{\ell}M_n^{m,m'}$, as
 ${}_{\ell}M_n^{m,m'}$ is $\iota$-pure.
\medskip

 Finally, assume $X_0$ is not geometrically connected, and let $k'$ be its
 field of constants. By what we have proven so far, there exists an
 \'{e}tale cover
 $X''\rightarrow X_0\otimes_{k'}\overline{k}$ satisfying
 (C) for any object $N\in\left<M\right>$ with respect to the base $k'$.
 By Lemma~\ref{compdiffbase}, the \'{e}tale cover
 \begin{equation*}
  X':=\coprod_{\sigma\in\mr{Gal}(k'/k)}X''
   \rightarrow
   \coprod_{\sigma\in\mr{Gal}(k'/k)}X_0\otimes_{k'}
   \overline{k}
   \cong X_0\otimes_k\overline{k}\,=X
 \end{equation*}
 fulfills the condition of the corollary. This finishes the proof. 
\end{proof}

\begin{thm}   \label{existcov}
 Let $X_0$ be a smooth geometrically connected scheme over $k$.
 Let $M_0 \in {\rm Isoc}^\dag(X_0)$ be  $\iota$-pure.  There exists a dense 
 open subscheme $U_0\hookrightarrow X_0$  and  a finite \'{e}tale cover
 $g: U'\to U$ such that any $N\in\left<M\right>$  satisfies {\normalfont
 (C)} with respect to $g$.
\end{thm}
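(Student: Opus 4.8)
The plan is to reduce the statement to Corollary~\ref{constC}. Applying that corollary to the pair $(U_0,M_0|_{U_0})$, it suffices to produce a dense open $U_0\subseteq X_0$ such that for every $n>0$ every object $N_n\in\langle M_n|_{U_n}\rangle$ admits an $\ell$-adic companion; the corollary then manufactures the finite \'etale cover $g\colon U'\to U$ with property~(C). Twisting $M_0$ by a suitable rank-one isocrystal --- which changes neither the hypothesis nor the conclusion --- we may assume that $\det M_0$ is of finite order, so that $M_0$, and hence every object of $\langle M\rangle$, is $\iota$-pure of weight $0$.

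The wild ramification is dealt with by Kedlaya's semistable reduction theorem (\cite[Thm.~2.4.4]{SS4}): there is a good alteration $h\colon X''_0\to X_0$, with $X''_0$ the complement of a simple normal crossings divisor in a smooth projective $k$-variety, such that $h^+M_0$ is tame with nilpotent residues along the boundary. A good alteration is generically finite \'etale, so after replacing $X_0$ by a dense open $U_0$ --- shrunk enough that $V_0:=h^{-1}(U_0)$ still admits a smooth projective SNC compactification along which $h_U^+M_0:=(h^+M_0)|_{V_0}$ remains tame with nilpotent residues --- the induced morphism $h_U\colon V_0\to U_0$ is finite \'etale and surjective. Proposition~\ref{comptame} gives an $\ell$-adic companion of $h_U^+M_0$ on $V_0$, and, as in the proof of that proposition, $h_U^+M_0$ is algebraic and $\sigma$-unit-root. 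Since $h_U$ is finite \'etale surjective, every closed point of $U_0$ lies under one of $V_0$ at which Frobenius acts as a power of Frobenius downstairs, so these two properties descend and $M_0$ itself is algebraic and $\sigma$-unit-root.

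It remains to descend the companion from $V_0$ to $U_0$, for which I will invoke Drinfeld's Theorem~\ref{thm:drin_tame}, applied over $k$ to $U_0$ with $f_x(t):=f_x(M_0,t)$ and with the finite \'etale cover $X'_0:=V_0$. Hypothesis~(i): for any smooth curve $\varphi\colon C_0\to U_0$, Abe's theorem for curves \cite[Thm.~4.2.2]{A} provides an $\ell$-adic companion of $\varphi^+M_0$, which is a lisse \'etale $\overline{\mb{Q}}_\ell$-sheaf since $\varphi^+M_0$ is $\sigma$-unit-root, and which realizes the polynomials $f_x$; its monodromy lies in $\mr{GL}(r,E_\lambda)$ for a single number field $E$, obtained from the algebraicity of all the $f_x$ and Deligne's Theorem~\ref{Delthmrec}(ii) --- whose tameness hypothesis is supplied by $V_0$, along which $h_U^+M_0$ is tame --- together with \cite[Lem.~2.7]{Dr} and \cite[\S2.3]{Dr} to make the field uniform in $\varphi$, exactly as in the proof of Proposition~\ref{comptame}. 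Hypothesis~(ii) holds since any curve factoring through $V_0$ pulls $\varphi^+M_0$ back to a restriction of the tame $h_U^+M_0$, whose companion is tame. Drinfeld's theorem then yields an $\ell$-adic companion ${}_\ell M_0$ of $M_0$ on $U_0$; its pull-backs ${}_\ell M_n$ are companions of the $M_n$, and for an arbitrary $N_n\in\langle M_n|_{U_n}\rangle$ an $\ell$-adic companion is obtained as the constituent of $({}_\ell M_n)^{\otimes m}\otimes({}_\ell M_n)^{\vee\otimes m'}$ matching $N_n$ through equality of $L$-functions, via Theorem~\ref{weilcons}(i) and its $\ell$-adic counterpart, as in the proof of Corollary~\ref{constC}. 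This verifies the hypothesis of Corollary~\ref{constC} for $(U_0,M_0|_{U_0})$, and the proof is complete.

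I expect the descent in the third step to be the main obstacle: one must exhibit a single number field $E$ and a single bound on the monodromy that work simultaneously for all curves in $U_0$, which is exactly why the full strength of Deligne's and Drinfeld's arguments is needed and not merely the known companion correspondence on individual curves; one must also keep track of base fields and Frobenius structures when moving between $X_0$, the $X_n$'s, and the \'etale cover $V_0$. The geometric preliminaries --- the semistable alteration, its generic \'etaleness, and an SNC compactification of $V_0$ compatible with the nilpotent residues --- are routine once arranged in the right order.
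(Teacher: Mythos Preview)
Your route is genuinely different from the paper's, and it contains a gap at the point you flag as ``routine''.

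The paper never descends the companion to $U_0$. It applies Corollary~\ref{constC} \emph{upstairs} on $X''_0$, where the hypothesis is free: every object of $\langle h^+M_n\rangle$ is tame with nilpotent residues, so Proposition~\ref{comptame} supplies all the needed companions at once. This yields a cover $g\colon X'\to X''$ with (C) for all of $\langle h^+M\rangle$. After restricting over a $U_0$ where $h$ is finite \'etale, the paper descends property~(C) itself along $h$ by a two-line trace argument: for $N\in\langle M\rangle$ one has $h^+N\in\langle h^+M\rangle$, the map $H^0(X'',h^+N)\to H^0(C\times_X X'',h^+N)$ is an isomorphism, and the trace splits the restriction $H^0(X,N)\to H^0(C,N)$ off as a direct summand of it. Drinfeld's theorem is not invoked at all in this proof.

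Your plan is to descend the companion via Drinfeld and then apply Corollary~\ref{constC} \emph{downstairs}. The gap is in the last move: having built ${}_\ell M_0$ on $U_0$, you assert that every irreducible $N_n\in\langle M_n|_{U_n}\rangle$ acquires a companion as ``the constituent of $({}_\ell M_n)^{m,m'}$ matching $N_n$ through equality of $L$-functions, as in the proof of Corollary~\ref{constC}''. But that argument runs in the opposite direction: it \emph{assumes} ${}_\ell N_n$ exists and uses the pole of $L(X_n,M_n^{m,m'}\otimes N_n^\vee,t)$ to locate it inside $\langle{}_\ell M_n\rangle$. Knowing only that $M_n^{m,m'}$ and $({}_\ell M_n)^{m,m'}$ are companions does not give a companion bijection between their irreducible constituents; the factorisation $\prod_i f_x(N_i,t)=\prod_j\sigma^{-1}f_x({}_\ell P_j,t)$ at each closed point does not by itself match the factors across varying $x$. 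This is precisely the part of the companion conjecture one is trying to establish.

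Your approach is repairable. Either apply Drinfeld's theorem separately to each irreducible $N_n$ (after twisting to finite determinant; algebraicity, $\sigma$-unit-rootness, and the tameness cover $V_0$ are inherited from $M_n^{m,m'}$), or---more in the spirit of what you wrote---bypass Corollary~\ref{constC}: run Lemma~\ref{ladicex} on ${}_\ell M_0$ to get $g$, observe that each $M_0^{m,m'}$ \emph{does} have the companion $({}_\ell M_0)^{m,m'}\in\langle{}_\ell M_0\rangle$, apply Lemma~\ref{companC} to get (C) for $M^{m,m'}$, and note that (C) passes to direct summands (and $\langle M\rangle$ is semisimple). Either fix works, but both are longer than the paper's trace argument, and the first essentially reproves Theorem~\ref{thm:comp} inside the proof of Theorem~\ref{existcov}. (A small side remark: the initial twist to finite determinant is unnecessary here and, as stated, need not preserve $\langle M\rangle$.)
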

\begin{proof}
 Note that if  $N_n$ satisfies (C) with respect to $g$, then the
 restriction of $N_n$ to any dense open subscheme $U_0$ satisfies (C)
 with respect to the restriction of $g$ to $U_0$.
 Indeed, if $Y_0$ is a smooth
 scheme over $k$ and $N$ is an isocrystal in $\mr{Isoc}^\dag(Y)$,
 then for any open dense subscheme $U_0\subset Y_0$, the restriction
 $H^0(Y,N)\rightarrow H^0(U,N)$ is an isomorphism
 (in fact, the restriction homomorphism
 $\pi_1^{\mr{isoc}}(U)\rightarrow\pi_1^{\mr{isoc}}(Y)$ is surjective:
 see the proof of \cite[2.4.19]{A}).
 \medskip
 
 By \cite[Thm.~2.4.4]{SS4}, there exists an alteration $h\colon X''_0\rightarrow X_0$ such that
 $X_0''$ is smooth and admits a smooth compactification such that the
 divisor at infinity has strict normal crossings, and such that
 $h^+(M_0)$ is log-extendable with nilpotent residues.
 Since any object of $\left<h^+M_n\right>$
 is log-extendable with nilpotent residues,
 it possesses an $\ell$-adic companion by Proposition
 \ref{comptame}. Thus, we may take $g\colon X'\rightarrow X''$ which
 satisfies (C) for any object in $\left<h^+M\right>$ by Corollary
 \ref{constC}.
 Now, take $U_0\subset X_0$ so that $h$ is a finite \'{e}tale cover. Then
 $h^+(M|_{U_0})$ satisfies (C) with respect to $g|_{h^{-1}(U_0)}$.
   Namely, replacing $X_0$ by $U_0$
 and $X'_0$, $X''$ by their pull-backs, we are in the following
 situation: we have \'{e}tale covers $X''_0\xrightarrow{h}X_0$,
 $X'\xrightarrow{g}X''$ such that any object in $\left<h^+M\right>$
 satisfies (C) with respect to $g$.
 \medskip

 We check now that any object in
 $\left<M\right>$ satisfies (C) with respect to $h\circ g$. Let $C_0$ be
 a curve such that
 $\#\pi_0(C_0\times_{X_0}X')=\#\pi_0(X')$
 and take $N\in\left<M\right>$. Then
 $h^+N\in\left<h^+M\right>$. We have the following diagram:
 \begin{equation*}
  \xymatrix{
   H^0(X'',h^+N)
   \ar[r]^-{\alpha}\ar@/^10pt/[d]^{\mr{tr}}&
   H^0(C\times_{X}X'',h^+N)\ar@/^10pt/[d]^{\mr{tr}}\\
 \ar[u] H^0(X,N)\ar[r]_-{\alpha'}\ar[u] & H^0(C,N),\ar[u]
   }
 \end{equation*}
 where $\mr{tr}$ denotes the trace map \cite[Thm.~1.5.1]{A}.
 The homomorphism $\alpha$ is an isomorphism by construction. 
 Since the trace is functorial, it makes $\alpha'$ a direct summand of
 $\alpha$. Thus $\alpha'$ is an isomorphism as well.
 This finishes the proof.
\end{proof}

\subsection{}
Finally, the existence of the curve $C_0$ is guaranteed by the following
lemma.

\begin{lem*} \label{lem:curve}
 Let $X'\rightarrow X$ be a connected finite \'{e}tale cover. 
 For any finite set of closed points $x^{(j)}\to X_0$, 
 there is a  smooth curve $C_0\rightarrow X_0$ with a factorization
 $x^{(j)}\to C_0 \to X_0$ such that $C_0\times_{X_0}X'$ is irreducible.
\end{lem*}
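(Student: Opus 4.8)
The plan is to construct the curve $C_0$ by a standard Bertini-type argument over the function field, arranging two conditions simultaneously: that $C_0$ passes through the finitely many prescribed closed points $x^{(j)}$, and that the pullback to the connected finite \'etale cover $X'\to X$ remains connected. The first step is to pass to a situation where we may apply a Poonen-type Bertini theorem over finite fields, as already invoked in Remark~\ref{lefsthmtame} via \cite{Poo}: embed $X_0$ (or a dense open) into a projective space, and look for a curve cut out by a complete intersection of hypersurfaces of high degree. The crucial point is that the condition ``$C_0\times_{X_0}X'$ is irreducible'' is, by Galois theory of the connected \'etale cover, equivalent to the condition that the image of $\pi_1(C_0)\to\pi_1(X_0)$ surjects onto $\mathrm{Gal}(X'/X_0)$; this is the same kind of $\pi_1$-surjectivity condition that appears in Lemma~\ref{ladicex} and in \cite[B.2]{EK}.

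The key steps, in order, would be: (1) Reduce to $X_0$ affine by shrinking, which is harmless since we only need \emph{a} curve through the $x^{(j)}$; here one must be slightly careful that shrinking $X_0$ does not throw away any of the points $x^{(j)}$, which it does not since there are finitely many of them and we may take $X_0$ itself if it is already quasi-projective, or work on an affine open containing all of them. (2) Consider the (still connected, finite \'etale) cover $X'_0\to X_0$ obtained by descending $X'\to X$ to some finite extension $k'/k$, or simply work geometrically throughout; the point of \cite{Dr} and of the surrounding text is that the statement is about $X=X_0\otimes\overline{k}$. (3) Apply the Lefschetz/Bertini theorem for \'etale fundamental groups over a finite field --- the version that produces space curves through prescribed points with controlled $\pi_1$ --- to the cover $X'_0\to X_0$: this yields a smooth irreducible curve $C_0\subset X_0$ (or with a quasi-finite map to $X_0$) passing through all $x^{(j)}$ such that $\pi_1(C_0)\twoheadrightarrow\mathrm{Gal}(X'_0/X_0)$. (4) Translate this surjectivity back into the irreducibility of $C_0\times_{X_0}X'$: a connected component of the pullback corresponds to an orbit of $\pi_1(C_0)$ on the fiber, and surjectivity onto the deck group forces a single orbit, hence one component; smoothness of $C_0$ and \'etaleness of the cover then give irreducibility.

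The main obstacle I expect is step (3): getting a Bertini-type statement that simultaneously (a) forces the curve through a \emph{given finite set} of closed points, (b) keeps the curve smooth and geometrically irreducible, and (c) controls the fundamental group / monodromy so that the \'etale cover stays connected. Over an infinite field this is classical (Jouanolou-style Bertini plus the irreducibility of the generic hyperplane section), but over a finite field one must use Poonen's closed-point sieve \cite{Poo} or the Gabber--Katz / Drinfeld refinements to handle all three constraints at once; the interpolation condition (a) costs only finitely many congruence conditions in Poonen's sieve, and (c) is arranged exactly as in \cite[Thm.~2.5]{Dr} by choosing the degree large enough relative to the cover $X'_0\to X_0$. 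I would cite \cite[Thm.~2.5]{Dr} (or the cohomological input of \cite{EK}, appendix~B) for the existence of a curve with the $\pi_1$-surjectivity property and then add the prescribed points via the sieve; the only genuinely new bookkeeping is checking that these two inputs are compatible, i.e.\ that the set of good hypersurface tuples has positive density \emph{and} meets the finitely many point-conditions, which is immediate since a positive-density set modulo finitely many congruences is still nonempty.

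Once $C_0$ is produced, the verification that $C_0\times_{X_0}X'$ is irreducible is purely formal: write $X'\to X$ as the cover attached to an open subgroup $H\le\pi_1(X)$ with $\pi_1(X)/\mathrm{core}(H)=\mathrm{Gal}(X'/X)$ finite (a genuine Galois group since $X'\to X$ is connected, so $H$ is the stabilizer of a point in a transitive $\mathrm{Gal}$-set), and note that the components of the pullback biject with $\pi_1(C_0)\backslash \pi_1(X)/H$; surjectivity of $\pi_1(C_0)\to\pi_1(X)/\mathrm{core}(H)$ collapses this double-coset space to a point. Thus the heart of the proof is the geometric construction, and everything after it is the translation via Galois theory of \'etale covers, which I would state in one or two sentences.
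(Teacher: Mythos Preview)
Your approach is essentially the same as the paper's: descend the cover $X'\to X$ to a finite level $X'_n\to X_n$, replace it by its Galois hull $Y_n\to X_0$, and then invoke Drinfeld's construction of a curve through prescribed closed points whose $\pi_1$ surjects onto the given finite quotient $\pi_1(X_0)\to\Gamma$. The paper's proof is shorter only because it cites \cite[Thm.~2.15~(i)]{Dr} directly as a black box for this last step (note: not Thm.~2.5, which is the reconstruction theorem you have in mind for Theorem~\ref{thm:drin_tame}); your plan to reassemble that statement from Poonen's sieve plus separate $\pi_1$-control would work but is unnecessary given Drinfeld's packaged result.
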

\begin{proof}
 There is a finite \'{e}tale cover $X'_n\rightarrow X_n$ over
 $k_n$ which base changes to $X'\to X$ over
 $\overline{k}$. Let $h\colon Y_n\to X_0$ be the Galois hull of
 the composite finite \'etale morphism $X'_n\to X_n\to X_0$.
 Then $Y_n$ is geometrically connected over $k_n$ as well.
 It is enough to solve the problem with $X'\to X$ replaced by
 $Y_n\otimes_{k_n}\overline{k}\to X$.
 The morphism $h$ corresponds to a finite quotient $\pi_1(X_0)\to
 \Gamma$. One then applies the construction of  \cite[Thm.~2.15
 (i)]{Dr}. This finishes the proof.
\end{proof}

\begin{thm}[Lefschetz theorem for isocrystals]
 \label{thm:lefschetz} 
 Let $X_0$ be a smooth scheme over $k$, and
 $M_0 \in {\rm Isoc}^\dag(X_0)$ be irreducible. There is a dense open
 subscheme $U_0\hookrightarrow X_0$ such that for any finite set of
 closed points $x^{(j)}\to U_0$,
 there is a smooth curve $C_0\rightarrow X_0$ with a factorization
 $x^{(j)}\to C_0 \to X_0$  such that the pull-back of $M_0$ to
 $C_0$ is irreducible.
\end{thm}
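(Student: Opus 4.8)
The plan is to deduce the theorem by assembling three earlier results: Theorem~\ref{existcov}, which produces the open set and an auxiliary finite \'etale cover; Lemma~\ref{lem:curve}, which produces a curve through the prescribed points with a connectedness property; and Corollary~\ref{Lefarith}, which turns a cohomological comparison of $H^0$'s into an equivalence of Tannakian categories.

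First I would reduce to the case where $X_0$ is geometrically connected over $k$. An irreducible object of $\mr{Isoc}^\dag(X_0)$ is supported on a single connected component, so we may assume $X_0$ connected; replacing $k$ by the field of constants of $X_0$ and invoking Lemma~\ref{lem:k}, we may further assume $X_0$ geometrically connected, since a curve over a finite extension of $k$ is still a curve over $k$ and closed points are unaffected. Next, by Theorem~\ref{thm:pure} the isocrystal $M_0$ is $\iota$-mixed; being irreducible, its weight filtration is trivial, so $M_0$ is $\iota$-pure. Hence Theorem~\ref{existcov} applies and yields a dense open $U_0\hookrightarrow X_0$ together with a finite \'etale cover $g\colon U'\to U$ such that every $N\in\langle M\rangle$ satisfies condition~(C) with respect to $g$; this $U_0$ will be the open subscheme asserted in the statement.

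Now fix a finite set of closed points $x^{(j)}\to U_0$. As $U'$ need not be connected, I would first choose a connected finite \'etale Galois cover $W'\to U$ dominating every connected component of $U'$ — for instance the normalization of the integral scheme $U$ in the Galois closure of the compositum of the function fields of the components of $U'$, which is \'etale over $U$ because each such component is — and then apply Lemma~\ref{lem:curve} to $W'\to U$. This produces a smooth curve $C_0$ with a factorization $x^{(j)}\to C_0\to U_0\hookrightarrow X_0$ such that $C_0\times_{U_0}W'$ is irreducible. Since $W'$ surjects onto each connected component $U'_i$ of $U'$, the base change $C_0\times_{U_0}U'_i$ is the image of the connected scheme $C_0\times_{U_0}W'$, hence connected, so $\#\pi_0(C_0\times_{U_0}U')=\#\pi_0(U')$; therefore condition~(C) applies and $H^0(U,N|_U)\to H^0(C,i^+N)$ is an isomorphism for every $N\in\langle M\rangle$, where $i\colon C_0\to X_0$ is the composite and $C=C_0\otimes_k\overline k$. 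Composing with the isomorphism $H^0(X,N)\xrightarrow{\sim}H^0(U,N|_U)$ (restriction of $H^0$ along a dense open immersion is an isomorphism, cf.\ the proof of \cite[2.4.19]{A} as recalled in the proof of Theorem~\ref{existcov}), the hypothesis of Corollary~\ref{Lefarith} is met for $i\colon C_0\to X_0$. That corollary then gives an equivalence $i^+\colon\langle M_0\rangle\xrightarrow{\sim}\langle i^+M_0\rangle$. Since $\langle M_0\rangle$ and $\langle i^+M_0\rangle$ are full subcategories of $\mr{Isoc}^\dag(X_0)$ and $\mr{Isoc}^\dag(C_0)$ stable under subobjects, and an equivalence carries monomorphisms to monomorphisms, a subobject of $i^+M_0$ corresponds to a subobject of $M_0$; as $M_0$ is irreducible, so is $i^+M_0$, which is the assertion.

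I expect that essentially all the substance is already contained in Theorem~\ref{existcov} — whose proof runs through Kedlaya's semistable reduction, the tame case (Proposition~\ref{comptame}), and the cohomological Lefschetz statement of Section~\ref{sec:coh} — and in Lemma~\ref{lem:curve}, a Drinfeld-style Bertini construction. Within the present assembly the only point needing genuine care is matching the combinatorial hypothesis $\#\pi_0(C_0\times_{U_0}U')=\#\pi_0(U')$ built into condition~(C) with the connected-cover hypothesis of Lemma~\ref{lem:curve}; this is exactly why one passes through the auxiliary connected cover $W'$.
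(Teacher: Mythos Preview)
Your argument is correct and follows the same route as the paper's: reduce to the geometrically connected case, invoke Theorem~\ref{thm:pure} for $\iota$-purity, apply Theorem~\ref{existcov} to obtain $U_0$ and the cover $g\colon U'\to U$, manufacture the curve via Lemma~\ref{lem:curve}, and finish with Corollary~\ref{Lefarith}. The only cosmetic differences are that the paper applies Lemma~\ref{lem:curve} directly to $U'$ (implicitly treating it as connected) and then passes to the Zariski closure and its normalization in $X_0$, whereas you interpose a connected Galois cover $W'$ to verify the $\pi_0$-count in condition~(C) and simply compose the curve with the open immersion $U_0\hookrightarrow X_0$; neither change affects the logic, and your extra care with $W'$ is justified.
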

\begin{proof}
 Arguing componentwise, we may assume $X_0$ to be connected, and
 moreover, geometrically connected by changing $k$ if needed.
 By Theorem~\ref{thm:pure},    $M_0$ is $\iota$-pure.
 By Theorem~\ref{existcov},  there is a dense open subscheme
 $U_0\subset X_0$ and a finite \'etale cover $g\colon U'\to U$
 such that any $N\in \langle M \rangle$ satisfies (C) with respect to
 $g$.  This is the open $U_0$ of the theorem. 
 One takes  $C^{U_0}_0$ as in
 Lemma~\ref{lem:curve}, so with factorization $x^{(j)}\to C^{U_0}_0\to
 U_0$  for all $j$, and such that $C^{U_0}_0\times_{U_0} U'$  is
 irreducible. Let  $\Gamma$ be the Zariski
 closure of the image of $C^{U_0}_0$ in $X_0$, $\Gamma ' \to \Gamma$ be
 its normalization, $C_0$ be the normalization of $\Gamma'$ in the field
 of functions of $C^{U_0}_0$.  The morphism $C_0\to X_0$ fulfills the
 required properties by considering Corollary~\ref{Lefarith}.
\end{proof}

\section{Remarks and  applications} \label{sec:appl}
\begin{lem} \label{lem:E}
 Let $X_0$ be a smooth scheme over $k$, and $M_0\in {\rm
 Isoc}^\dag(X_0)$. Assume that $M_0$ is algebraic. Then there is a
 number field $E\subset \overline{\mathbb{Q}}_p$ such that
 $f_{x}(M_0,t)\in E[t]$ for any finite extension $k'$ and $x\in
 X_0(k')$.
\end{lem}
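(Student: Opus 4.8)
The plan is to verify everything on curves by means of the Lefschetz theorem, feed the resulting compatible system of eigenpolynomials into Deligne's criterion~\ref{Delthmrec}, and then descend the field of coefficients. First I would reduce to $M_0$ irreducible: the reciprocal roots of $f_x(N,t)$ for a constituent $N$ of $M_0$ in $\mr{Isoc}^\dag(X_0)$ are among those of $f_x(M_0,t)$, hence algebraic, so each constituent is again algebraic, and one takes the compositum of the finitely many number fields obtained. For $M_0$ irreducible I would then twist by a rank one isocrystal so that $\det M_0$ has finite order (\cite[Thm.~6.1]{AbeLL}); the twisting object being algebraic, this only enlarges the eventual $E$ by an algebraic number. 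Under these reductions $M_0$ is $\iota$-pure of weight $0$: by Theorem~\ref{thm:pure} it is $\iota$-mixed, hence (being irreducible) $\iota$-pure of some weight, and that weight is $0$ because, by the Lefschetz theorem~\ref{thm:lefschetz}, the restriction of $M_0$ to a suitable curve through a closed point is again irreducible with finite determinant and therefore $\iota$-pure of weight $0$ by Theorem~\ref{lefsthmtame} and \cite[Thm.~4.2.2]{A}.

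Next I would set up Deligne's Theorem~\ref{Delthmrec}. Fix a prime $\ell\neq p$ and an isomorphism $\sigma\colon\overline{\mb{Q}}_p\to\overline{\mb{Q}}_\ell$, and put $t_n(x):=\sigma f_x(M_0,t)$, which lies in $\overline{\mb{Q}}[t]\subset\overline{\mb{Q}}_\ell[t]$ since $M_0$ is algebraic. For every smooth curve $\varphi\colon C_0\to X_0$, the isocrystal $\varphi^+M_0$ has an $\ell$-adic companion on $C_0$ — a lisse Weil $\overline{\mb{Q}}_\ell$-sheaf — by \cite[Thm.~4.2.2]{A}, with Frobenius eigenpolynomials $t_n(\varphi(x))$; so hypothesis $(*)$ of Theorem~\ref{Delthmrec} holds, and so does the hypothesis of its part (i), $M_0$ being algebraic at every closed point. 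Using a curve through a point where the pull-back of $M_0$ is irreducible (Lefschetz) and \cite[Thm.~4.2.2]{A} to see that at that one point the reciprocal roots of the eigenpolynomial are Weil numbers of weight $0$ and $\ell'$-adic units for every $\ell'\neq p$, Theorem~\ref{Delthmrec}(i) (in the form of the Remark following it) propagates this to every closed point $x$.

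If $M_0$ is tame and $X_0$ admits a smooth compactification with simple normal crossings boundary, the companions above are tamely ramified on every curve (companions preserve Swan conductors, as in the proof of Proposition~\ref{comptame}), so Theorem~\ref{Delthmrec}(ii) applies with $X'_0=X_0$ and directly yields a number field $E$ with $f_x(M_0,t)\in E[t]$ at all closed points. In general I would first pass, by \cite[Thm.~2.4.4]{SS4}, to a semistable-reduction alteration $h\colon X''_0\to X_0$ with $X''_0$ of the above type and $h^+M_0$ tame; the tame case applied to $h^+M_0$ gives a number field $E''$ with $f_{x''}(h^+M_0,t)\in E''[t]$ for all closed points $x''$ of $X''_0$. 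For a closed point $x\in X_0$ choose $x''\in X''_0$ over $x$ and set $m:=[\kappa(x''):\kappa(x)]\leq\deg(h)$; by functoriality of the pull-back, $i_{x''}^+h^+M_0$ is obtained from $i_x^+M_0$ by raising the Frobenius to the $m$-th power, so writing $f_x(M_0,t)=\prod_i(1-\alpha_it)$ we get $\prod_i(1-\alpha_i^mt)=f_{x''}(h^+M_0,t)\in E''[t]$. Hence each $\alpha_i$ has degree over $\mb{Q}$ bounded by a constant depending only on $\deg(h)$, the rank of $M_0$, and $[E'':\mb{Q}]$. Since moreover each $\alpha_i$ is a Weil number of weight $0$, is a unit at every finite place not over $p$, and has bounded valuations at the places over $p$ (the Frobenius slopes of a fixed overconvergent $F$-isocrystal being bounded), Northcott's finiteness theorem shows that the $\alpha_i$, ranging over all closed points $x$, form a finite set; the number field $E\subset\overline{\mb{Q}}_p$ they generate then satisfies $f_x(M_0,t)\in E[t]$ for every closed point.

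The real obstacle is the wild case. Theorem~\ref{Delthmrec}(ii) requires tameness along a \emph{finite \'etale} cover, which wild ramification forbids, so one is forced through a semistable-reduction \emph{alteration} and must afterwards descend the field of coefficients along it. The descent is harmless for the degrees of the Frobenius eigenvalues — replacing the Frobenius by a bounded power changes degrees only boundedly — but promoting ``bounded degree'' to ``a single number field'' still needs the Northcott-type finiteness fed by purity (weight $0$) and by the unit-root property propagated from curves; this last step is the crux.
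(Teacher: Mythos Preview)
Your reductions (to irreducible, twist to finite determinant, propagation of weight~$0$ and of the $\ell'$-unit property via Theorem~\ref{Delthmrec}(i), and the tame case on the semistable model~$X''_0$) are all fine, and your bound on $[\mb{Q}(\alpha_i):\mb{Q}]$ via $\alpha_i^m\in E''(\alpha_j^m)$ is correct. The gap is the Northcott step. Bounded degree alone is not enough; you also need bounded height, and for that you claim ``bounded valuations at the places over $p$ (the Frobenius slopes of a fixed overconvergent $F$-isocrystal being bounded)''. But the Newton slope at a closed point $x$ of degree $d_x$ over $k$ is $v_p(\alpha_i)$ \emph{divided by} $d_x\cdot s$ (with $q=p^s$); boundedness of the slopes gives only $|v_p(\alpha_i)|\le C\cdot d_x$, which tends to infinity with $d_x$. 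Concretely, already for $p\equiv 1\pmod 4$ and $\pi\bar\pi=p$ in $\mb{Q}(i)$, the numbers $(\pi/\bar\pi)^n$ are weight-$0$ Weil numbers, $\ell'$-units for all $\ell'\neq p$, of degree~$2$ over~$\mb{Q}$, yet of height $n\log p$: nothing you have established rules out such eigenvalues at points of large degree. So Northcott does not apply and the descent from $X''_0$ to $X_0$ breaks.

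The paper sidesteps this entirely. Rather than proving the result on $X''_0$ and descending, it applies Theorem~\ref{Delthmrec}(ii) \emph{directly to $M_0$} on a dense open $U_0\subset X_0$ over which the semistable alteration $h$ is finite \'etale: that cover $h^{-1}(U_0)\to U_0$ is exactly the finite \'etale cover demanded in~(ii), since pulling back along it makes every curve-restriction of $M_0$ tame (hence its $\ell$-companion tame). This yields a single number field $E_U$ for all closed points of $U_0$. The complement $X_0\setminus U_0$ is then stratified by smooth schemes of strictly smaller dimension, to which one applies the induction hypothesis, and one takes the compositum of the finitely many fields obtained. No height bound is ever needed.
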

\begin{proof}
 We argue by induction on the dimension of $X_0$. The curve case has
 already been treated. We assume the lemma is known for smooth schemes
 of  dimension  less than $\dim(X_0)$.
 By \cite[Thm.~2.4.4]{SS4}, there exists an alteration $h\colon
 X'_0\rightarrow X_0$ such that
 $X_0'$ is smooth and admits a smooth compactification such that the
 divisor at infinity has strict normal crossings, and such that
 $h^+(M_0)$ is log-extendable. Let $U_0\subset X_0$ be a dense open
 subscheme such that $h|_{h^{-1}(U_0)}$ is finite \'{e}tale. Using
 Theorem~\ref{Delthmrec} (ii), there exists a number field $E_U$ such
 that $f_x(M_0,t)\in E_U[t]$ for any $x\in U_{0}(k')$.
 Now, there exists a finite stratification $\{X_{i,0}\}_{i\in I}$ of
 $X_0\setminus U_0$ by smooth schemes. Since algebraicity is an absolute
 notion, the restriction $M_0|_{X_{i,0}}$ is algebraic as well. Thus, by
 induction hypothesis, there exists a number field $E_i$ such that
 $f_x(M_0,t)\in E_i[t]$ for any $x\in X_{i,0}(k')$. Take $E$
 to be a number field which contain $E_U$ and $E_i$ for $i\in I$.
 Then $E$ is a desired number field.
\end{proof}

We now formulate the existence of $\ell$-adic companions in
general. This has been proven by Kedlaya in \cite[Thm.~5.3]{K2}.
However,  two additional properties follow from our method: 
 $\ell$-adic companions  of irreducible overconvergent isocrystals with
 finite determinant are irreducible, and they are $\ell$-adic \'{e}tale
 sheaves, not only Weil sheaves.

\begin{thm} \label{thm:comp}
 Let $X_0$ be a smooth  geometrically connected scheme over $k$, and
 $M_0\in {\rm Isoc}^\dag(X_0)$ be irreducible with finite
 determinant. Then $\ell$-adic companions exist and they are irreducible
 lisse \'{e}tale $\overline{\mb{Q}}_\ell$-sheaves.
\end{thm}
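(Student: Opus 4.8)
The plan is to reduce the general case to the tame, log-extendable situation handled in Proposition~\ref{comptame}, using the semistable reduction theorem and the machinery of condition (C) from Theorem~\ref{existcov}. First I would fix $\sigma\colon\overline{\mb{Q}}_p\to\overline{\mb{Q}}_\ell$. By Theorem~\ref{thm:pure}, $M_0$ is $\iota$-mixed, and after twisting by a character of the finite field (which does not affect irreducibility and, since the determinant is of finite order, keeps the twisted isocrystal algebraic) we may assume $M_0$ is $\iota$-pure, say of weight $0$. Applying \cite[Thm.~2.4.4]{SS4}, choose an alteration $h\colon X''_0\to X_0$ with $X''_0$ smooth, admitting a smooth compactification with strict normal crossings boundary, such that $h^+M_0$ is log-extendable with nilpotent residues; shrinking $X_0$ to a dense open $U_0$ we may moreover assume $h$ is finite \'etale. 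By Proposition~\ref{comptame}, applied to each geometrically irreducible constituent of $h^+M_0$ over each $X''_n$, every object of $\langle h^+M_n\rangle$ has an $\ell$-adic companion, and these companions are tame along the boundary.

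Next I would produce the \'etale sheaf on $X_0$ itself, not just on the cover. By Corollary~\ref{constC} there is a finite \'etale cover $g\colon X'\to X''$ such that every object of $\langle h^+M\rangle$ satisfies (C) with respect to $g$; combining with the trace argument in the proof of Theorem~\ref{existcov}, every object of $\langle M\rangle$ satisfies (C) with respect to $h\circ g$ (after possibly further shrinking $U_0$). Now, on $X''_0$, Proposition~\ref{comptame} gives a lisse \'etale $\overline{\mb{Q}}_\ell$-sheaf ${}_\ell(h^+M_0)$ with $\sigma f_x(h^+M_0,t)=f_x({}_\ell(h^+M_0),t)$ for all closed points. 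To descend this to $X_0$, I would invoke Drinfeld's Theorem~\ref{thm:drin_tame}: by Lemma~\ref{lem:E} applied to the algebraic isocrystal $M_0$, there is a number field $E\subset\overline{\mb{Q}}_p$ with $f_x(M_0,t)\in E[t]$ for all $x$; setting $f_x(t):=\sigma f_x(M_0,t)\in\sigma(E)[t]$, hypothesis (i) of Theorem~\ref{thm:drin_tame} is realized using the $\sigma$-companions ${}_\ell(\varphi^+M_0)$ on curves $\varphi\colon C_0\to X_0$ supplied by Proposition~\ref{comptame} (and \cite[Lem.~2.7, \S2.3]{Dr} to bound the field of definition of the monodromy), while hypothesis (ii) is the tameness of these curve-companions after pulling back to the cover $X'_0\to X_0$ (on $X''_0$ they are tame, and a curve $C_0$ with $\#\pi_0(C_0\times_{X_0}X')=\#\pi_0(X')$ pulls back to $X''_0$ preserving tameness along the boundary by \cite[Lem.~2.3]{De}). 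Thus Drinfeld's theorem yields a lisse Weil $\overline{\mb{Q}}_\ell$-sheaf ${}_\ell M_0$ on $X_0$ with $f_x({}_\ell M_0,t)=\sigma f_x(M_0,t)$; since $M_0$ has finite determinant, so does ${}_\ell M_0$, hence it is an \'etale sheaf, and it extends over all of $X_0$ (not just $U_0$) by the same argument via a stratification used in Lemma~\ref{lem:E}.

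Finally I would check irreducibility. The sheaf ${}_\ell M_0$ is an $\ell$-adic companion of the $\iota$-pure $M_0$, so by Lemma~\ref{compirr} it is geometrically irreducible provided $M$ is irreducible; since $M_0$ is irreducible on $X_0$ and we arranged geometric connectedness, $M$ is irreducible, whence ${}_\ell M$ is irreducible and a fortiori ${}_\ell M_0$ is irreducible as a Weil sheaf. Uniqueness of the companion (and independence of all auxiliary choices) follows from Theorem~\ref{weilcons}~(i), which shows that a $\iota$-pure lisse sheaf is determined up to semisimplification by its Frobenius traces at closed points, together with the irreducibility just proven.

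I expect the main obstacle to be the careful bookkeeping needed to verify hypothesis (ii) of Drinfeld's theorem, namely that the \emph{single} \'etale cover $X'_0\to X_0$ simultaneously tames the companion ${}_\ell(\varphi^+M_0)$ on \emph{every} curve $\varphi\colon C_0\to X_0$ — this requires matching the covers coming from Kedlaya's semistable reduction (which tames $h^+M_0$ on $X''_0$) with the condition that the relevant curves pull back connectedly, and then transporting tameness statements between the isocrystal side and the $\ell$-adic side via the coincidence of Swan conductors (\cite[Lem.~2.3]{De}, or the epsilon-factor argument indicated in the proof of Proposition~\ref{comptame}).
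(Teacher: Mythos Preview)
Your approach differs from the paper's in a fundamental way: you never invoke the Lefschetz theorem (Theorem~\ref{thm:lefschetz}), which is the main result of the paper and the key input in its proof of Theorem~\ref{thm:comp}. The paper's argument is short: Theorem~\ref{thm:lefschetz} produces a curve $\varphi\colon C_0\to X_0$ on which $\varphi^+M_0$ is irreducible with finite determinant, so by \cite[Thm.~4.2.2]{A} it is algebraic and $\sigma$-unit-root at every closed point of $C_0$; Theorem~\ref{Delthmrec}~(i) then propagates algebraicity and the $\sigma$-unit-root property to all of $X_0$, Lemma~\ref{lem:E} supplies the number field $E$, and the hypotheses of Theorem~\ref{thm:drin_tame} are verified exactly as in the proof of Proposition~\ref{comptame} (with the semistable-reduction cover for~(ii)). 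Irreducibility of ${}_\ell M_0$ again comes from the Lefschetz curve: a strict subobject of ${}_\ell M_0|_{C_0}$ would, via the curve companion correspondence of \cite{A}, produce a strict subobject of $M_0|_{C_0}$. By contrast, you re-enter the machinery behind Theorem~\ref{thm:lefschetz} (semistable reduction, condition~(C), Corollary~\ref{constC}, Theorem~\ref{existcov}) instead of citing it.

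Your route has a genuine gap at the irreducibility step. You apply Lemma~\ref{compirr}, which requires $M$ (the pullback of $M_0$ to $X$) to be irreducible, and you justify this by ``since $M_0$ is irreducible on $X_0$ and we arranged geometric connectedness, $M$ is irreducible''. That implication is false: an irreducible object of $\mr{Isoc}^\dag(X_0)$ can become reducible in $\mr{Isoc}^\dag(X)$, since the Frobenius structure may permute several geometric constituents transitively. Without a Lefschetz curve on which $M_0$ stays irreducible, you have no mechanism to conclude irreducibility of ${}_\ell M_0$. A secondary issue is that you invoke Lemma~\ref{lem:E} on ``the algebraic isocrystal $M_0$'' without first establishing that $M_0$ is algebraic (your parenthetical about the twist ``keeping'' algebraicity presupposes it); in the paper this too is supplied by the Lefschetz curve together with Theorem~\ref{Delthmrec}~(i).
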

\begin{proof}
 Using Theorem \ref{thm:lefschetz}, there is a smooth curve
 $\varphi\colon C_0\rightarrow X_0$ such that
 $\varphi^+(M_0)$ is irreducible. This implies that $\varphi^+(M_0)$ is
 irreducible with finite determinant, so it is algebraic and
 $\sigma$-unit-root by \cite[Thm.~4.2.2]{A}.
 Thus there exists a closed point $x$ such that
 $f_x(M_0,t)$ is algebraic and any root is $\ell$-adic unit.
 By Theorem~\ref{Delthmrec} (i), $M_0$ is algebraic and
 $\sigma$-unit-root at any point of $X_0$ and we
 can apply  Lemma~\ref{lem:E} to conclude to the existence of $E$.
 Further, the existence of the companions follows from
 Theorem~\ref{thm:drin_tame} and from the semistable reduction theorem
 as the proof of Proposition \ref{comptame}.
 As for  irreducibility,
 there is a smooth curve $C_0\to X_0$ such that the pull-back
 $M_0|_{C_0}$ of $M_0$ to $C_0$ is irreducible, thus the pull-back of an
 $\ell$-adic companion to $C_0$ is irreducible as well, else a strict
 subobject  would produce a strict subobject of $M_0|_{C_0}$. This
 finishes the proof. 
\end{proof}

\begin{cor} \label{cor:finiteness}
 Let $X_0$ be a smooth scheme over $k$. Let $X_0\hookrightarrow
 \overline{X}_0$ be a normal compactification, $D$ be an effective
 Cartier divisor with support $\overline{X}_0\setminus X_0$,
 $\sigma\colon\overline{\mathbb Q}_p\to\overline{\mathbb Q}_\ell$ is an
 isomorphism for a prime $\ell \neq p$.
 Then there are finitely many isomorphism classes of irreducible
 $M_0\in {\rm Isoc}^\dag(X_0)$ of rank $r$, such that $_\ell M_0^\sigma$
 has ramification bounded by $D$, up to twist by rank $1$ objects in
 ${\rm Isoc}^\dag({\rm Spec}(k))$.
\end{cor}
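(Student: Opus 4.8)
The plan is to deduce the statement from Deligne's $\ell$-adic finiteness theorem \cite[Thm.~1.1]{EK} by transporting it through the $\sigma$-companion correspondence supplied by Theorem~\ref{thm:comp}. First I would dispose of the harmless normalizations: arguing componentwise reduces to $X_0$ connected, and replacing $k$ by the field of constants of $X_0$, using the base-field independence recalled in \ref{recallisoc} together with Lemma~\ref{compdiffbase}, reduces to $X_0$ geometrically connected; I regard the verification that the statement (and the equivalence ``up to twist by $\mr{Isoc}^\dag(\mr{Spec}(k))$'') is stable under this last move as routine, in the spirit of the opening lines of the proof of Theorem~\ref{thm:lefschetz}.

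The one genuinely non-cosmetic preliminary step is to reduce to the case where $\det M_0$ is of finite order, since that is the hypothesis under which Theorem~\ref{thm:comp} applies and since only twists coming from $\mr{Isoc}^\dag(\mr{Spec}(k))$ are permitted in the statement. Here I would use that the group of isomorphism classes of rank $1$ objects of $\mr{Isoc}^\dag(\mr{Spec}(k))$ is canonically $\overline{\mb{Q}}_p^{\times}$, hence divisible: by \cite[Lem.~6.1]{AbeLL} there is a rank $1$ object $L_0\in\mr{Isoc}^\dag(\mr{Spec}(k))$ with $\det(M_0)\otimes L_0$ of finite order, and choosing an $r$-th root $\mu_0$ of $L_0$ and replacing $M_0$ by $M_0\otimes\mu_0$ preserves irreducibility and the rank, makes the determinant of finite order, and alters the $\sigma$-companion only by $\otimes\,{}_\ell\mu_0^\sigma$ with ${}_\ell\mu_0^\sigma$ an unramified character, hence leaves its ramification unchanged. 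I expect this twisting/determinant bookkeeping to be the fiddliest part of the argument; everything ``hard'' is already packaged elsewhere.

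With $\det M_0$ of finite order, Theorem~\ref{thm:comp} guarantees that ${}_\ell M_0^\sigma$ exists and is an irreducible lisse \'etale $\overline{\mb{Q}}_\ell$-sheaf of rank $r$ (and it is unique up to isomorphism, two irreducible $\overline{\mb{Q}}_\ell$-sheaves with the same Frobenius characteristic polynomials being isomorphic). Next I would check that $M_0\mapsto{}_\ell M_0^\sigma$ is injective on isomorphism classes: if ${}_\ell M_0^\sigma\cong{}_\ell M_0'^\sigma$ then $f_x(M_0,t)=f_x(M_0',t)$ at every closed point $x$ by applying $\sigma^{-1}$, and since an irreducible isocrystal is $\iota$-pure by Theorem~\ref{thm:pure}, Theorem~\ref{weilcons}~(i) applied to $M_0\otimes M_0'^{\vee}$ (whose $L$-function depends only on these eigenpolynomials) forces $\mr{Hom}(M_0',M_0)\neq 0$, whence $M_0\cong M_0'$. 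Finally, the companion correspondence matches twists of $M_0$ by rank $1$ objects of $\mr{Isoc}^\dag(\mr{Spec}(k))$ with twists of ${}_\ell M_0^\sigma$ by characters of $\mr{Gal}(\overline{k}/k)$, both families being parametrized by $\overline{\mb{Q}}_p^{\times}\xrightarrow{\ \sigma\ }\overline{\mb{Q}}_\ell^{\times}$.

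Assembling these facts, $M_0\mapsto{}_\ell M_0^\sigma$ induces an injection from the set in the statement into the set of isomorphism classes of irreducible lisse $\overline{\mb{Q}}_\ell$-sheaves of rank $r$ on $X_0$ with ramification bounded by $D$, taken up to twist by a character of $\mr{Gal}(\overline{k}/k)$; the latter set is finite by \cite[Thm.~1.1]{EK}, which finishes the proof. I do not expect a serious obstacle: the substantive content is already in Theorem~\ref{thm:comp}, Theorem~\ref{thm:pure}, Theorem~\ref{weilcons}, and \cite[Thm.~1.1]{EK}, and the only slightly delicate point is the first genuine step above---reducing to finite determinant using only constant twists, which is exactly where divisibility of $\overline{\mb{Q}}_p^{\times}$ and \cite[Lem.~6.1]{AbeLL} enter---together with the bookkeeping making the companion map well defined and injective on the quotient sets.
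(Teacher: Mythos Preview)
Your proposal is correct and follows essentially the same route as the paper: transport the finiteness question to the $\ell$-adic side via Theorem~\ref{thm:comp} and then invoke \cite[Thm.~1.1]{EK}, checking that the companion map is injective on isomorphism classes. The only cosmetic differences are that the paper's one-line proof leaves the reduction to finite determinant implicit (you spell it out), and for injectivity the paper cites Lemma~\ref{coinH0} rather than Theorem~\ref{weilcons}~(i) directly---but Lemma~\ref{coinH0} is proved via Theorem~\ref{weilcons}, so your argument is just an unwinding of the same step.
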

\begin{proof}
 This is a direct application of Theorem~\ref{thm:comp} and Deligne's
 finiteness theorem  \cite[Thm.~1.1]{EK}, once one knows that the
 correspondence $M_0\rightarrow {}_\ell M_0^\sigma$ is injective, which
 follows from Lemma~\ref{coinH0}.
\end{proof}

We end with two remarks.

\begin{rem} \label{rmk:ss}
 Kedlaya's semistable reduction can be made finite \'etale, at least in
 the case where the base field is finite,
 as asked in \cite[Rmk.~A.1.2]{SS3}.
 Let $X_0$ be a smooth scheme over $k$, and $M_0 \in {\rm
 Isoc}^\dag(X_0)$. Then there exists a finite \'{e}tale cover $g\colon
 X'_0\rightarrow X_0$ such that $g^+(M_0)$ is tamely ramified. Indeed,
 let ${}_{\ell}M_0$ be an $\ell$-adic companion. We take a finite
 \'{e}tale cover $g\colon X'_0\rightarrow X_0$ such that
 $g^*{}_{\ell}M_0$ is tamely ramified on $X'$. Then we claim that
 $g^+M_0$ is tame. Indeed, it suffices to check that for any smooth
 curve $C_0$ and a morphism $i\colon C_0\rightarrow X_0$, the
 restriction $i^+(g^+M_0)$ is tame by Definition \ref{dfntame}.
 Now, $i^+(g^+M_0)$ and $i^*(g^*{}_\ell M_0)$ are companion, and the
 local epsilon factors coincide, thus $i^+(g^+M_0)$ is tame since
 $i^*(g^*{}_\ell M_0)$ is tame by construction.
\end{rem}

\begin{rem} \label{dejong}
 If $M\in {\rm Isoc}^\dag(X)$ is irreducible, it is coming from an
 irreducible $M_n\in {\rm Isoc}^\dag(X_n)$.
 An $\ell$-adic companion ${}_\ell M_n$ has to be irreducible
 as well by Lemma~\ref{compirr}. If in addition  $\pi_1(X)=\{1\}$,
 then ${}_\ell M_n$ comes from $k_{n+1}$. This implies that   $M_n$
 comes from $k_{n+1}$ as well. As  ${\rm Isoc}^\dag(X)$ is semisimple,
 this shows  a (very) weak version of  de Jong's conjecture
 (\cite[Conj.~2.1]{ES}): $\pi_1(X)=\{1\}$ implies that objects in $ {\rm
 Isoc}^\dag(X)$ come from $ {\rm Isoc}^\dag({\rm Spec}(k))$.
\end{rem}

\end{document}